\documentclass[12pt]{amsart}

\voffset -10truemm

\pdfoutput=1

\usepackage{latexsym}
\usepackage[centertags]{amsmath}
\usepackage{amsfonts}
\usepackage{amssymb}
\usepackage{amsthm}
\usepackage{newlfont}
\usepackage{graphics}
\usepackage{color}

\usepackage[usenames,dvipsnames]{xcolor}

\usepackage[demo]{graphicx} 
\usepackage{wrapfig,floatrow}
\usepackage{lipsum} 
\usepackage[font=small,labelfont=bf,margin=5mm]{caption}

\RequirePackage{xparse, graphicx, caption, picins}
\DeclareDocumentCommand \addpic{O{0.4\textwidth} m g}{\parpic[r]{%
\begin{minipage}{#1}
    \includegraphics[width=\textwidth]{#2}%
    \IfNoValueTF{#3}{}{\captionof{figure}{\footnotesize #3}}
\end{minipage}
}}

\parskip 5pt
\parindent 2em
\newtheorem{theo}{Theorem}[section]

\newtheorem{lem} [theo]{Lemma}
\newtheorem{cor}[theo]{Corollary}
\newtheorem{prop}[theo]{Proposition}

\theoremstyle{remark}
\newtheorem{rem}[theo]{Remark}

\newtheorem{algo}[theo]{Algorithm}


\setlength{\topmargin}{-5mm} \setlength{\oddsidemargin}{0.2cm}
\setlength{\evensidemargin}{0.2cm} \setlength{\textwidth}{15.8cm}
\setlength{\textheight}{22.42cm}

\textheight=24cm 

\def\SW{\texttt{SW}}
\def\NE{\texttt{EN}}

\newcommand{\area}{\operatorname{area}}
\newcommand{\coarea}{\operatorname{coarea}}
\newcommand{\dinv}{\operatorname{dinv}}

\newcommand{\bounce}{\operatorname{bounce}}
\newcommand{\cobounce}{\operatorname{cobounce}}

\newcommand{\rank}{\operatorname{rank}}

\def \LLRA {\Longleftrightarrow}

\def \TAU {{\cal T}}
\def \CD {{\cal D}}
\def \WD {\widetilde D}

\def \RA {\!\!\rightarrow\!\!}

\def\om {\omega}

\def\rank{\texttt{rank}}

\begin{document}

\vskip -.5in

\title{On the Sweep Map for Fuss Rational  Dyck Paths}

\author{Adriano M. Garsia$^1$ and Guoce Xin$^2$}

\address{ $^1$Department of Mathematics, UCSD \\
$^2$School of Mathematical Sciences, Capital Normal University,
Beijing 100048, PR China}

\email{$^1$\texttt{garsiaadriano@gmail.com}\ \  \&\small $^2$\texttt{guoce.xin@gmail.com}}

\date{May 23, 2017} 

\begin{abstract}
Our main contribution here is the discovery of a new family of standard
  Young tableaux
$
\TAU^k_n
$  which are in bijection with the family $\CD_{m,n}$
of Rational  Dyck
paths for $m=k\times n\pm1$   (the so called   ``Fuss'' case).  Using this family we
give a new proof of the invertibility of the sweep map in the Fuss case by
means of a very simple explicit algorithm. This new algorithm has running time $O(m+n)$. It
is independent of the Thomas-William algorithm.

\medskip
\noindent
\begin{small}
\emph{Keywords:} Rational Dyck paths, Sweep map, Young tableaux, $q,t$-Catalan polynomials.
\end{small}

\medskip
\noindent
\begin{small}
\emph{Math Subject Classification:} 05A19, 05E40.
\end{small}


\end{abstract}

\maketitle

\section{Introduction\label{s-introduction}}
Our focus in this paper are the so called \emph{Rational Dyck Paths} in the
$m\times n$ lattice rectangle, when $(m,n)$ is a co-prime pair of positive integers.
These paths proceed by North and East unit steps from $(0,0)$ to
$(m,n)$ remaining always  above the main diagonal (of slope $n/m$).
Figure \ref{fig:Dyck-Path-Example} illustrates an example of an $(m,n)$-Dyck path where $m=7$  and $n=5$.
\begin{figure}[!ht]
\begin{center}
   \includegraphics[height=1.5 in]{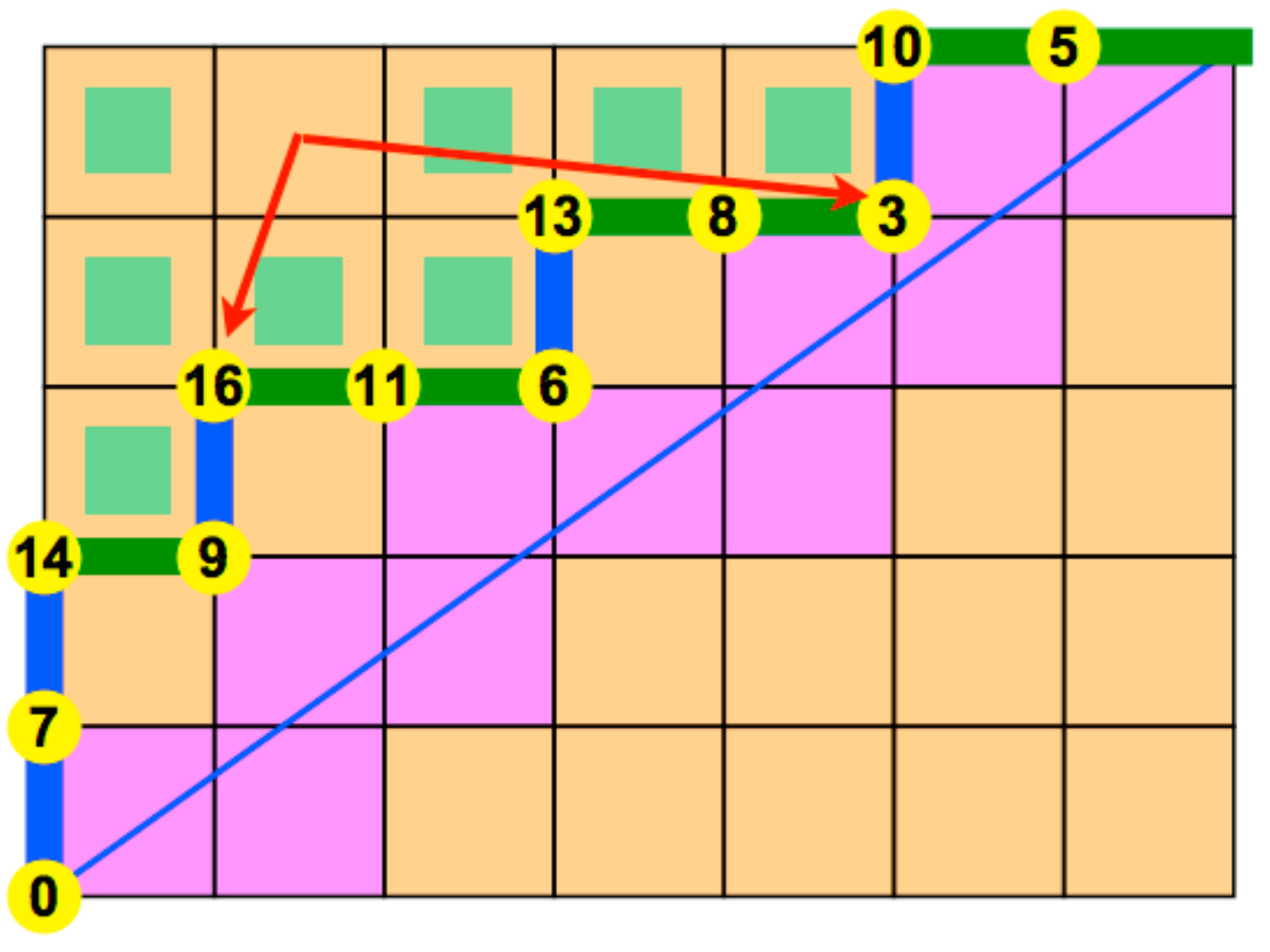}
\end{center}
\caption{An example of $(7,5)$-Dyck path.\label{fig:Dyck-Path-Example}}
\end{figure}

The coprimality of  $(m,n)$ forces the paths to remain weakly above the lattice diagonal (the set of cells cut by the main diagonal.) Each vertex of the path is assigned a \emph{rank} as follows. We start with assigning $0$ to the South
end of the first North step. This done we add an $m$ as we go  North and subtract an $n$ as we go East, as carried out in our example.

Each path   is assigned two statistics \emph{area} and  \emph{dinv}.
The area gives the number of lattice cells between the path and the lattice diagonal and  the dinv may now be simply obtained
by means of an identity proved in \cite{dinv-def} as follows. A cell of the English partition above the path contributes a unit to dinv
 if and only if the rank $a$  of the vertex on the left at the bottom of its column, and the rank  $b$
of the bottom vertex at the end of its row satisfy
 the inequalities
$
0< a-b<m+n
$. Notice that, for the path $D$ in our example, the cell
with no green square does not contribute to $dinv(D)$ since
$
0<16-3<7+5
$
is false. On the other hand,   the  cell at the top left  corner of the rectangle  does contribute to $dinv(D)$ since
$
0<14-3<7+5
$
  is true. All the cells with a green square  contribute.  Thus in this case $dinv(D)=8$. Since the lattice diagonal has $m+n-1$ cells both area
  and dinv statistics are at most $(m-1)(n-1)/2$.
For a visual definition of dinv, see \cite{dinv-area}.

 The \emph{sweep} map was conjectured to give a bijection of the family
  ${\cal D}_{m,n}$ of paths in the $m\times n$ lattice rectangle onto itself that changes $dinv$ to $area$. The construction of the sweep map is deceptively simple. Geometrically
we sweep a path $D\in \CD_{m,n}$ by letting the main diagonal of the
$m\times n$ lattice rectangle move from right to left,
and draw a North step when  we sweep the South end of a North step of $D$ and draw an East step when we sweep the West end of an East step.
The resulting path, can be shown to be in $\CD_{m,n}$ and  will be denoted here by $\Phi(D)$. The reader may find in \cite{sweepmap} all the variations, extensions and generalizations of the sweep map and who did what in this area, except the proof that it is bijective. The  proofs that it is well defined and  the $dinv$ sweeps to $area$ property can be found in \cite{dinv-area} where a visual proof and references are given. See also \cite{Gorsky-Mazin} for a bijective proof that $codinv$ sweeps to $coarea$.
 The bijectivity has been shown in a variety of special  cases including
  when $m=kn\pm1$ which is proved in \cite{Loehr-higher-qtCatalan} and \cite{Gorsky-Mazin2} and we will  refer to here as the ``Fuss'' case.
A general result proving the invertibility of a class of sweep maps that were listed in \cite{sweepmap}, was recently given by
Thomas-Williams in \cite{Nathan}. This paper is a break through in this subject after years of unsuccessful attempts at proving the invertibility.
In fact, some of the arguments in \cite{Nathan} led to the discovery in \cite{Rational-Invert} of a
simpler and purely combinatorial algorithm for inverting the sweep map for all rational Dyck paths.

The results presented here predate the  Thomas-Williams paper and our inversion algorithm, which is restricted to the Fuss case, has no connections with the Thomas-Williams algorithm. The running time of our algorithm is clearly $O(m+n)$, while previous algorithms in \cite{Nathan} and \cite{Rational-Invert} have running time  $O((m+n)\area(\Phi^{-1}(D)))$.

A more computer friendly way to construct $\Phi(D)$ is to arrange  the ranks in increasing order. For instance for the path $D$ in Figure \ref{fig:Dyck-Path-Example} this gives
the rank sequence
\begin{align}\label{e-II.1}
  r(D)=(0,3,5,6,7,8,9, 10,11, 13,14,16).
\end{align}
This done we construct a word in $S^nW^m$ (consisting of $n$ letters $S$ and $m$ letters $W$) by replacing an entry in this sequence  by an $S$ if it is the rank of the
South end of a North step and
by a $W$ if it is the rank of the West end of an East  step.
For our example
the rank sequence $r(D)$ in \eqref{e-II.1}  gives the word
\begin{align}\label{e-II.2}
\SW(\Phi(D))=SSWSSWSWWWWW.
\end{align}

We  can then use this word as a recipe for drawing $\Phi(D)$. That is we draw a South end (hence go North) when we read an $S$ and draw a West end (hence go East) when we read a $W$.  This gives the pairing of $D$  with
 a path of area $8$ as expected in Figure \ref{fig:sweep-image}.

\begin{figure}[!ht]
$$
 \hskip .20in D= \hskip -2.3in\vcenter{ \includegraphics[height= 1.2 in]{RATCAT.pdf}}
\hskip -2.3in \qquad \Longrightarrow \qquad
\Phi(D)= \hskip -2.2in \vcenter{ \includegraphics[height=1.2 in]{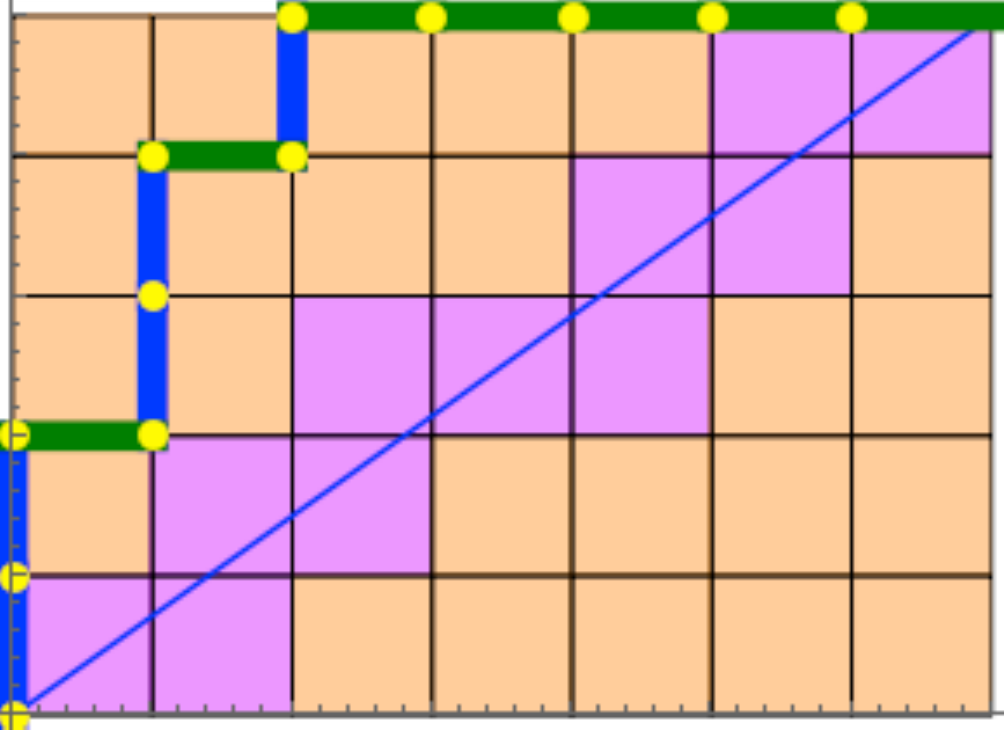}}
$$
\caption{A $(7,5)$-Dyck path and its sweep map image.\label{fig:sweep-image}}
\end{figure}

 The invertibility problem is to reconstruct $D$ from the sole knowledge of
$\SW(\Phi(D))$. Likewise,    we can construct  a word in $N^nE^m$
by using the same rank sequence. However here we replace an entry
in $r(D)$
by an $N$ if it is the rank of a North end of a North step and
by an $E$ if it is the rank of an East end of an East  step. For our example
the above sequence gives the word
\begin{align}\label{e-II.3}
\NE(\Phi(D))=EEEENEENENNN.
\end{align}

But now the situation is different, the knowledge of both $\SW(\Phi(D))$ and
$\NE(\Phi(D)) $ uniquely determines $r(D)$ and therefore $D$ itself.  From this point of view the invertibility   problem  reduces to the construction of  $\NE(\Phi(D))$ making sole use of $\SW(\Phi(D))$.
See Section \ref{s-basic}.

In the Fuss case we can use the SW word of a path $D$ to construct a standard Young tableau $T(D)$ which encodes so much information about $D$ to allow us to invert the sweep map in the simplest possible way, as we shall see.  When $m=k  n+1$
the family $\TAU_n^k$ consists of an $n\times (k+1)$ array with entries $1,2,\ldots ,m+n-1$, row and column increasing from left to right and top to bottom, with the additional property that for any pair of entries $a<d$  with $d$ directly below $a$, the entries between $a$ and $d$ form a horizontal strip. That is,  any pair of entries  $b,c$ with $a<b<c<d$ never appear in the same column.

The bijection between $\CD_{m,n}$ and $\TAU_n^k$
is constructed by the following.

\begin{algo}[Filling Algorithm]
\label{al-Filling Algorithm}

\noindent
Input: The SW-sequence $\SW(D)$ of a Dyck path $D\in \CD_{m,n}$ where $m=kn+1$.

\noindent
Output: A standard tableau $T=T(D)\in \TAU_n^k$.

\begin{enumerate}
\item   Start by placing a $1$ in the top row and the first column.

\item  If the second letter in $\SW(D)$ is an $S$ we put a $2$ on the top of the second column.

\item   If the second letter in   $\SW(D)$ is a $W$ we place $2$ below the $1$.

\item  At any  stage   the entries at the bottom of the columns but  not in row $k+1$ will be called $active$.

\item     Having placed $1,2,\cdots i-1$, we place $i$ immediately below the smallest  active entry if the $i^{th}$ letter in $\SW(D)$ is a $W$, otherwise we place $i$ at the top of the first empty column.

\item   We carry this out recursively until $1,2,\ldots ,m+n-1$ have  all  been placed.
\end{enumerate}
\end{algo}
\begin{rem}
Let $T=T(D)$ be as above. Clearly the first row of $T$ is increasing and each column is increasing. To see that $T$ is a standard Young tableau,
we observe that $T_{1,j}<T_{1,j+1}$ for all $j<n$, and then $T_{i,j}$ becomes active earlier than $T_{i,j+1}$ for all $i\le k+1$.
\end{rem}

\noindent
 This is best understood by
 an example. For  $n=3$ and $k=4$ and $D$ as in
display below
$$
{  D=\hskip -5mm \vcenter{ \includegraphics[height=.7in]{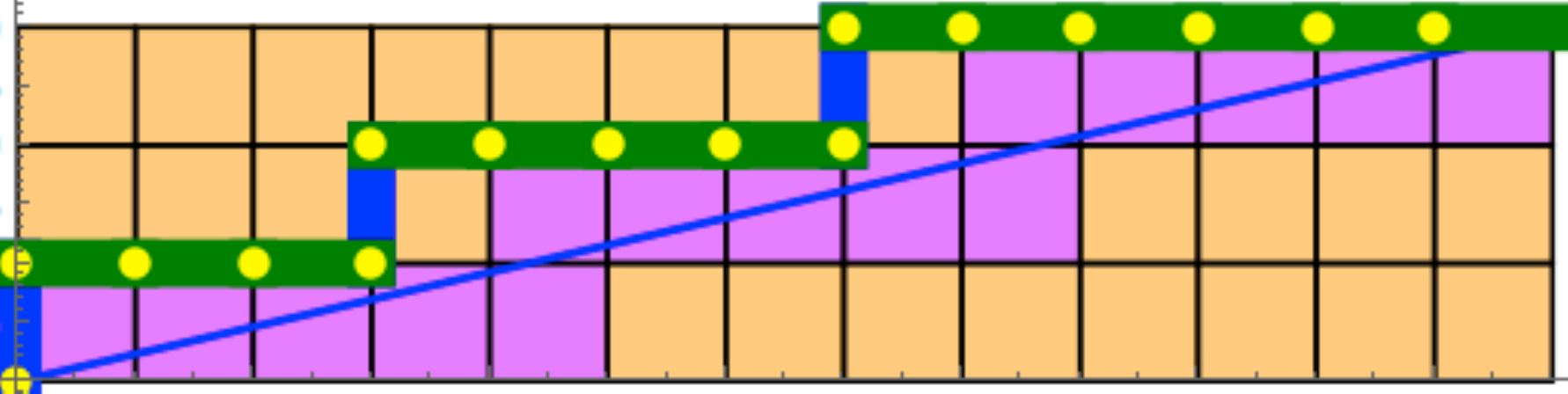} }
\atop
\hskip -3in \SW(D)=SWWWSWWWWSWWWWWW
}
\hskip -2in
{T(D)=\hskip -.3in \vcenter{\includegraphics[width=.6in]{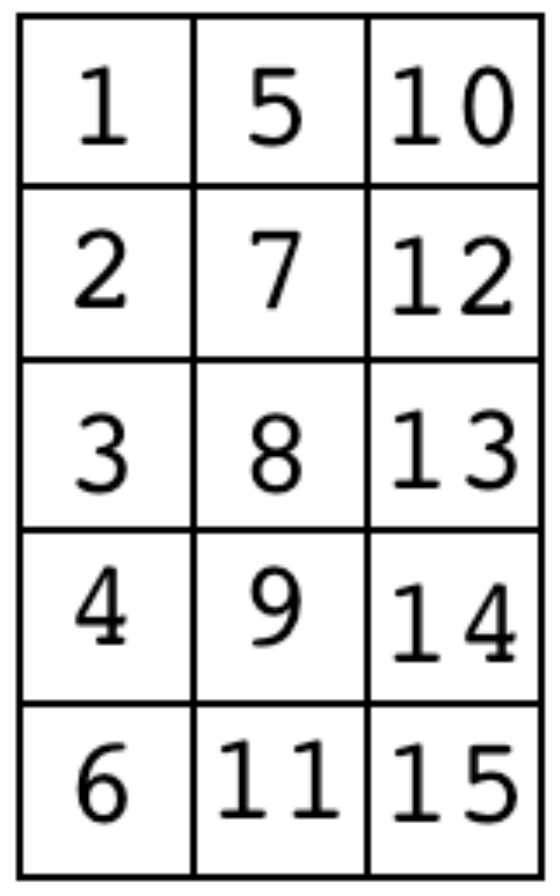} }}
$$
we obtain the tableau on the right. Notice  that $\SW(D)$ can be recovered from
$T(D)$ by  placing  letters $S$ on the positions indicated by the first row of $T(D)$ and letters $W$ in all remaining $m=kn+1$ positions.

Our  contributions  here are  the  proofs of the following   Theorems.

\begin{theo}
  \label{t-I.1}
In  the Fuss case $m=kn+1$, given $T(D)$  the word $\NE(D)$ is simply obtained by putting the letters $N$ in the positions obtained by adding $1$ to the entries in the bottom  row of $T(D)$, and putting the letters $E$
in all the remaining $m$ positions.
\end{theo}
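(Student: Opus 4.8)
Fix $E\in\CD_{m,n}$ and set $D=\Phi(E)$, so that $\SW(D)=\SW(\Phi(E))$ is the word handed to Algorithm~\ref{al-Filling Algorithm} and $\NE(D)=\NE(\Phi(E))$ is the word to be produced, both read off, as in the Introduction, from the increasing list $r_1<r_2<\cdots<r_{m+n}$ of ranks of $E$. Recall the elementary dictionary: $r_i$ is the South end of a North step of $E$ exactly when $r_i+m$ is again a rank of $E$, in which case $r_i+m$ is the North end of that step, and the North ends are precisely the South ends increased by $m$. Hence if $r^{(1)}<\cdots<r^{(n)}$ are the South ends of $E$, the letters $N$ of $\NE(D)$ sit in the rank positions of $r^{(1)}+m<\cdots<r^{(n)}+m$; and, as noted just after Algorithm~\ref{al-Filling Algorithm}, the letters $S$ of $\SW(D)$ fill the top row of $T(D)$, with $r^{(c)}$ occupying position $T_{1,c}$. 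Therefore Theorem~\ref{t-I.1} is equivalent to the single identity
\[
  \#\{\,i:\ r_i<r^{(c)}+m\,\}=T_{k+1,c}\qquad(1\le c\le n),
\]
i.e. $r^{(c)}+m$ sits in rank position $T_{k+1,c}+1$.

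The plan is to prove this by describing each column of $T(D)$ explicitly, using the Fuss hypothesis $m=kn+1$. For a South end $r$ put $u_0(r)=r$, let $u_j(r)$ for $1\le j\le k$ be the largest rank of $E$ not exceeding $r+jn$, and set $u_{k+1}(r)=r+m$. First one checks that $u_0(r)<u_1(r)<\cdots<u_k(r)<u_{k+1}(r)$ and that no rank of $E$ lies strictly between $u_k(r)$ and $r+m$ (immediate: $u_k(r)$ is the largest rank $\le r+kn=r+m-1$, and $r+m$ is itself a rank, since $r$ is a South end). The key assertion is then that \emph{column $c$ of $T(D)$, read top to bottom, consists exactly of the rank positions of $u_0(r^{(c)}),u_1(r^{(c)}),\dots,u_k(r^{(c)})$}. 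Granting it, $T_{k+1,c}$ is the rank position of $u_k(r^{(c)})=\max\{r_i:\ r_i<r^{(c)}+m\}$, which is precisely the displayed identity.

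To establish the key assertion I would induct on the index $i=1,\dots,m+n-1$ as Algorithm~\ref{al-Filling Algorithm} scans $\SW(D)$, exploiting that, since positions are processed in increasing order, the rule ``place $i$ below the smallest active entry'' makes the incomplete columns behave as a FIFO queue: a new column enters the back on reading an $S$, the front column is extended on reading a $W$, and a column leaves the queue on reaching row $k+1$. The inductive invariant, after scanning $r_1,\dots,r_i$, would be that for each South end $r\le r_i$ the corresponding column holds exactly the positions of $u_0(r),\dots,u_{\ell}(r)$ with $\ell=\ell(r,i)=\#\{0\le j\le k:\ u_j(r)\le r_i\}-1$; that this column is still active iff $\ell<k$; and that the active columns, listed front to back, appear in increasing order of their current last markers $u_{\ell(r,i)}(r)$. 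The case $w_{i+1}=S$ is routine. For $w_{i+1}=W$ one must show that $r_{i+1}$ is the next marker $u_{\ell+1}(r)$ for exactly one South end $r$, namely the one indexing the front of the queue, so that the algorithm appends $r_{i+1}$'s position to that column and re-queues (or retires) it, preserving the invariant. This last point is the heart of the proof and the step I expect to require real work: it comes down to a monotonicity lemma — \emph{among two simultaneously active North steps, if the current marker of one precedes the current marker of the other, then so does its next marker} — so that the markers $u_j(\cdot)$ of the $n$ North steps thread through the common increasing enumeration of ranks in exactly the round-robin order of the queue, each new $W$ being the immediate successor marker of the front column and of no other active column. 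This is where $m=kn+1$ is indispensable: it forces each North step to cross exactly $k$ of the width-$n$ bands $(r+(j-1)n,\,r+jn]$, and this rigidity is what prevents two markers from overtaking out of turn. Everything else — the two reductions above, the $S$-case of the induction, and the bookkeeping of columns entering and leaving the queue — is straightforward.
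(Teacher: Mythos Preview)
Your reduction of Theorem~\ref{t-I.1} to the column description
\[
  \text{column }c\text{ of }T(D)\ =\ \bigl(f(S_c),\,f(S_c+n),\,\dots,\,f(S_c+kn)\bigr),
  \qquad f(x)=\#\{\,i:r_i\le x\,\},
\]
is correct (and, incidentally, is a pleasant explicit description of $T(D)$ that the paper never states). But your route is genuinely different from the paper's: the paper does not prove Theorem~\ref{t-I.1} in isolation at all. It obtains it as an immediate byproduct of Theorem~\ref{t-I.2}/Theorem~\ref{t-2.1}: once one knows that in the walk $w(T)$ the rank function satisfies $\rank(i+1)>\rank(i)$ for every $i$ (equation~\eqref{e-2.2}), the single $S$--edge $T_{1,c}\to T_{k+1,c}+1$ in the walk forces $\rank(T_{k+1,c}+1)=\rank(T_{1,c})+m=S_c+m=N_c$, so the $N$'s of $\NE(D)$ sit exactly at the positions $T_{k+1,c}+1$. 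Thus the paper's argument passes through the full inversion machinery (the induction on $n$, Lemma~\ref{l-1.3}, Lemma~\ref{l-2.1}, the case analysis of Theorem~\ref{t-2.1}); yours aims to short--circuit all of that.

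The issue is the step you yourself flag. What you need is not merely the order--preservation ``if $u_\ell(r)<u_{\ell'}(r')$ then $u_{\ell+1}(r)\le u_{\ell'+1}(r')$'' --- that weak form is a two--line consequence of the definitions, since $u_\ell(r)<u_{\ell'}(r')\le r'+\ell'n$ forces $r+\ell n<r'+\ell'n$ (else $u_{\ell'}(r')$ would be a rank $\le r+\ell n$, hence $\le u_\ell(r)$). What you actually need is \emph{strict} inequality, equivalently that the map $(c,j)\mapsto u_j(S_c)$ is injective onto $\{r_1,\dots,r_{m+n-1}\}$; in interval language, that every gap $[r_i,r_{i+1})$ contains exactly one of the $(k+1)n$ values $S_c+jn$. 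This is the real content, and it is not a formality: it encodes a structural constraint on the sorted rank set of a Fuss Dyck path (roughly, that the gaps between consecutive ranks are controlled relative to the residues $S_c\bmod n$). It can be proved, but doing so cleanly seems to require an argument of comparable weight to the paper's induction --- the ``monotonicity lemma'' has absorbed the difficulty rather than dissolved it. So your plan is sound and the target description is right, but as written the proposal relocates the hard step without discharging it.
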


For instance in the example above, we  get $\NE(D)$ by placing  the $N$'s in positions $7,12,16$ and the $E$'s in all the remaining $13$ positions.
We will see later how $\Phi^{-1}(D)$ itself may be easily recovered from the pair
$\SW(D)$ and $\NE(D)$.

However there is a way to  recover  $\Phi^{-1}(D)$  directly from $T(D)$
as follows.

\begin{theo}
  \label{t-I.2}
 In  the Fuss case $m=kn+1$,  the permutation $\sigma(D)$ that rearranges the letters of $\SW(D)$ in the order that gives the SW word of $\Phi^{-1}(D)$, is obtained by a walk through the entries in $T(D)$
governed by the following instructions:
\begin{enumerate}
\item Write in bold  all the entries in $T(D)$ (including $n+m$) that are by 1 more than a bottom row entry.
\item Go to 1 and write 1.
\item If you are in   row  1 go down the column to row  k+1, if the entry there is r  go to  r+1   and write  r+1.
\item If you are not in the first row go up the column one row. If the entry there  is $r$ and is not bold write $r$.
\item If the entry there  is $r$ and bold go to $r-1$ and continue until
you run into a normal entry, then write it.

\end{enumerate}
\end{theo}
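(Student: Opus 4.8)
\noindent\textit{Plan of proof.} Put $\WD:=\Phi^{-1}(D)$, so that $D=\Phi(\WD)$. The plan is to recognize the walk as a step-by-step simulation of the ``rank trace'' of $\WD$. By the construction of the sweep map, $\SW(D)$ is the $S/W$ labelling of the ranks of $\WD$ written in increasing order, and $\NE(D)$ the $N/E$ labelling of that same sorted list: if $v_0,v_1,\dots,v_{m+n-1}$ denote the vertices of $\WD$ in path order, then $\SW(D)_j$ is the type of the step of $\WD$ starting at the vertex whose rank is the $j$-th smallest. Hence the permutation to be produced is $\sigma(D)(i)=$ the position of $v_{i-1}$ in that sorted list, since $\SW(\Phi^{-1}(D))_i=\SW(D)_{\sigma(D)(i)}$ exactly when the step of $\WD$ starting at the vertex of $\sigma(D)(i)$-th smallest rank is the $i$-th step, i.e. when that vertex is $v_{i-1}$. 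Because a North step raises the rank by $m$ and an East step lowers it by $n$, passing from $v_{i-1}$ to $v_i$ affects positions as follows: if the current position $j$ carries the $t$-th $S$ of $\SW(D)$ (so $v_{i-1}$ starts a North step), the next position is that of the $t$-th $N$ of $\NE(D)$; if $j$ carries the $u$-th $W$ (so $v_{i-1}$ starts an East step), the next position is that of the $u$-th $E$ of $\NE(D)$. So it suffices to prove that the two kinds of moves in the walk realize exactly these two transitions, with the walk started at position $1$ (as $0$ is the smallest rank) and run until all of $1,\dots,m+n$ have been written, which happens precisely when the trace returns to $1$, i.e. when $\WD$ closes up at $(m,n)$, again of rank $0$.

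The $S$-move follows at once from Theorem~\ref{t-I.1}. The $S$-positions of $\SW(D)$ in increasing order are exactly the first-row entries of $T(D)$, so a position carrying the $t$-th $S$ is the top $T_{1,t}$ of column $t$; and by Theorem~\ref{t-I.1} the $N$-positions of $\NE(D)$ in increasing order are the bottom-row entries of $T(D)$ increased by $1$ (the bottom row of a standard tableau being increasing), so the $t$-th of them is $T_{k+1,t}+1$. Thus the transition is ``from $T_{1,t}$, descend column $t$ to its row-$(k+1)$ entry $r=T_{k+1,t}$, then pass to $r+1$'' --- precisely instruction~(3). The first move of the walk is the case $t=1$; for $t=n$ one has $T_{k+1,n}=m+n-1$ (the bottom-right corner being the largest entry), so the move lands on the phantom bold entry $n+m$ named in the statement.

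The $W$-move is the heart of the matter. Among $1,\dots,m+n-1$, the $W$-positions of $\SW(D)$ are the non-first-row entries of $T(D)$, and the $E$-positions of $\NE(D)$ are the non-bold numbers (the complement of the $N$-positions, by Theorem~\ref{t-I.1}). So if the current position is a non-first-row entry $T_{i,c}$ with $i\ge2$, it carries the $u$-th $W$ where $u=\#\{\text{non-first-row entries}\le T_{i,c}\}$, and the required transition sends it to the $u$-th smallest non-bold number. On the other hand, instruction~(4)--(5) moves $T_{i,c}$ up one row to $T_{i-1,c}$ and then decreases past any run of bold numbers, landing on $\max\{x\le T_{i-1,c}:x\text{ is non-bold}\}$. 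These coincide as soon as one proves the identity
\begin{equation*}
\#\{\text{non-bold numbers}\le T_{i-1,c}\}\;=\;\#\{\text{non-first-row entries}\le T_{i,c}\}\qquad(2\le i\le k+1),
\end{equation*}
equivalently $T_{i,c}-\#\{\text{first-row entries}\le T_{i,c}\}=T_{i-1,c}-\#\{\text{bold numbers}\le T_{i-1,c}\}$. I expect this to be the main obstacle.

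To prove the identity I would go over to the Dyck-path side and read off, from the Filling Algorithm, what the columns of $T(D)$ mean for $\WD$: a column is opened at each $S$, and at each $W$ the new entry is placed below the \emph{smallest} active column bottom --- a first-in/first-out rule --- so that column $c$ records (the positions of) the start of a North step of $\WD$ together with the starts of the $k$ East steps that bounce off it. Under this correspondence the identity becomes the assertion that the number of East-step starts of $\WD$ of rank at most $s_{T_{i,c}}$ equals the number of East-step ends of rank at most $s_{T_{i-1,c}}$, where $s_p$ is the $p$-th smallest rank of $\WD$; this is checked from the block decomposition of $\WD$ available in the Fuss case $m=kn+1$. (Alternatively the identity can be proved inside $\TAU_n^k$ by induction on $i$ along a column: the endpoints give $T_{1,c}-\#\{\text{first-row}\le T_{1,c}\}=T_{1,c}-c$ and $T_{k+1,c}-\#\{\text{bold}\le T_{k+1,c}\}=T_{k+1,c}-(c-1)$, and the inductive step reduces, via the defining horizontal-strip property of $\TAU_n^k$, to counting the entries lying strictly between the vertically adjacent pair $T_{i-1,c}<T_{i,c}$.) Granting the identity, the walk reproduces the rank trace of $\WD$ one vertex at a time, so its output is $\big(\sigma(D)(1),\dots,\sigma(D)(m+n)\big)$ and $\SW(\Phi^{-1}(D))_i=\SW(D)_{\sigma(D)(i)}$ as claimed; since the ranks of $v_0,\dots,v_{m+n-1}$ are distinct, the walk writes each of $1,\dots,m+n$ exactly once and then returns to $1$, where it halts.
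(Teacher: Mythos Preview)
Your route is genuinely different from the paper's. The paper never assumes $\Phi^{-1}(D)$ exists: it defines $\oD$ from the walk, proves by induction on $n$ (removing the first column of $T(D)$ and using Lemma~\ref{l-1.3}) that the walk is a single $m{+}n$-cycle and that the induced ranks satisfy $\rank(i{+}1)>\rank(i)$, and then reads off both $\oD\in\CD_{m,n}$ and $\Phi(\oD)=D$ from that monotonicity. You instead posit $\WD=\Phi^{-1}(D)$ and try to match the walk, step by step, to the rank trace of $\WD$.

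There are two real gaps. First, you invoke Theorem~\ref{t-I.1} for the $S$-move, but in this paper Theorem~\ref{t-I.1} is not proved before Theorem~\ref{t-I.2}; it is a corollary of the monotonicity $\rank(i{+}1)>\rank(i)$ established in the proof of Theorem~\ref{t-2.1} (from $\rank(T_{1,t})=S_t$ and the $S$-edge one gets $\rank(T_{k+1,t}{+}1)=S_t+m=N_t$, whence the position of $N_t$ is $T_{k+1,t}{+}1$). So as written your argument is circular; you would need an independent proof of Theorem~\ref{t-I.1}.

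Second, the $W$-move identity
\[
T_{i,c}-\#\{j:T_{1,j}\le T_{i,c}\}\;=\;T_{i-1,c}-\#\{j:T_{k+1,j}<T_{i-1,c}\}
\]
is only sketched. Your first sketch (``$k$ East steps that bounce off'' a North step of $\WD$) conflates $D$ with $\WD$: the Filling Algorithm reads $\SW(D)$, and its ``smallest active'' rule is not a first-in/first-out queue on columns of $\WD$. Your second sketch is the right idea and can be completed entirely inside $T(D)$: set $p=T_{i-1,c}$, $q=T_{i,c}$; by the horizontal-strip property the entries $p{+}1,\dots,q$ lie in pairwise distinct columns, and one checks these columns are exactly the $j$ with $T_{1,j}\le q$ and $T_{k+1,j}\ge p$ (opened by time $q$ and not completed before $p$), of which there are $\#\{j:T_{1,j}\le q\}-\#\{j:T_{k+1,j}<p\}$; equating this to $q-p$ gives the identity. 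With that filled in, and with an independent proof of Theorem~\ref{t-I.1}, your argument would verify the theorem as stated --- but, unlike the paper's inductive proof, it would not by itself yield a new proof of invertibility.
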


Let us apply this  Algorithm to the above path $D$. In the resulting display below we have   the path $\Phi^{-1}(D)$.  On its top, we placed  the permutation produced by the algorithm and above it the resulting word $\SW\big(\Phi^{-1}(D)\big)$. For convenience, on the right is the modified tableau $T(D)$ that yielded the permutation.
$$
\hskip  .2in{
\vcenter{\includegraphics[width=3.4in]{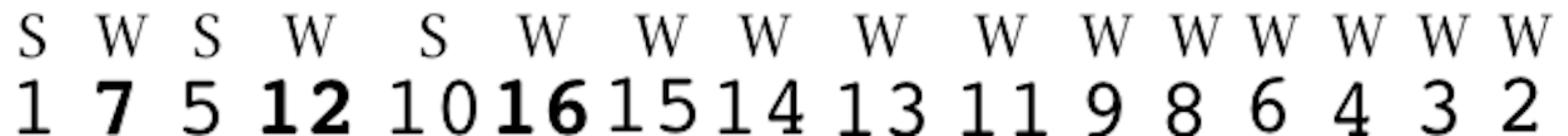} }
\atop
{\Phi^{-1}(D)=\hskip -.4in\vcenter{\includegraphics[width=3.2in]{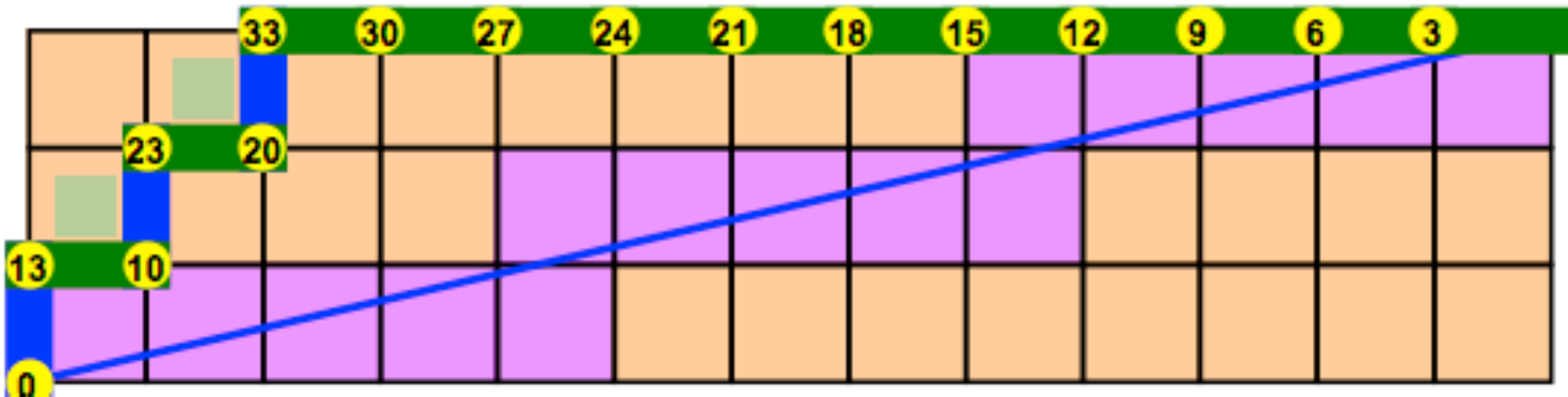} } }
}
\hskip -2.5in
{T(D)=
 \vcenter{\includegraphics[width=.6in]{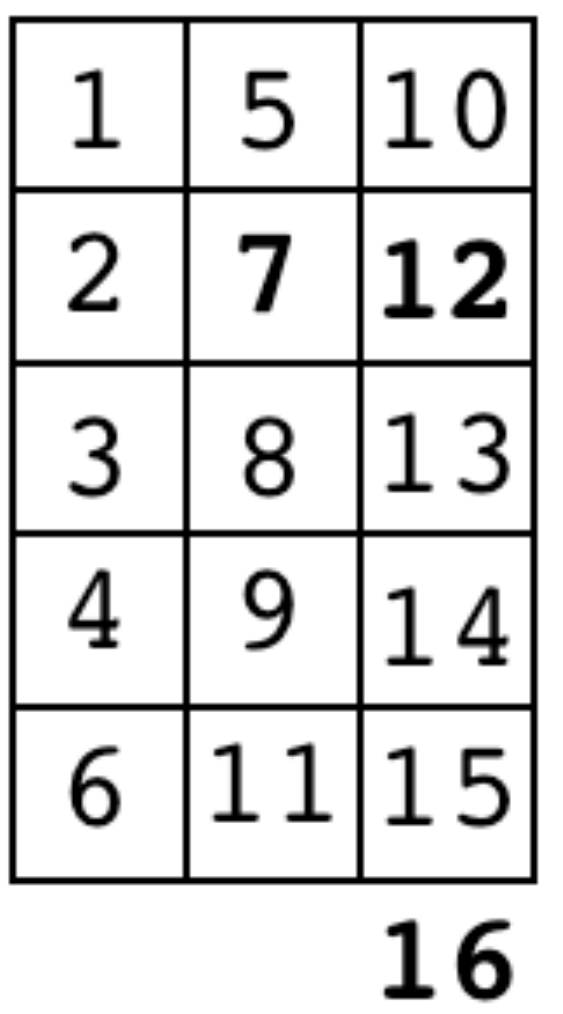} }}
$$

To obtain  $\Phi^{-1}(D)$ we proceed as follows. Having obtained the permutation we construct  the word $\SW\big(\Phi^{-1}(D)\big)$
one letter at a time by placing above each entry of the permutation an $S$ if that entry is in  the top row of $T(D)$ and a $W$ if that entry is not in the top row.
 This done we can simply draw $\Phi^{-1}(D)$ by reading the sequence of letters of $\SW\big(\Phi^{-1}(D)\big)$.

 The case $m=kn-1$ is analogous. For convenience we will separate the changes  as for $m=kn+1$.

\begin{theo}
  \label{t-I.3}
  In  the Fuss case $m=kn-1$, given $T(D)$  the word $\NE(D)$ is simply obtained by putting letters $N$ in the positions obtained by subtracting  $1$ to the entries in the bottom  row of $T(D)$, and putting letters $E$
in all the remaining $m$ positions.
\end{theo}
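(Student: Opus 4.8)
\noindent {\it Sketch of proof.}
The argument follows the same lines as the proof of Theorem~\ref{t-I.1}; we indicate only the plan and the places where the hypothesis $m=kn-1$ forces a change. One first needs the $m=kn-1$ analogues of the family $\TAU_n^k$ and of the Filling Algorithm: the shape is again an $n$-column array, but now with $m+n-1=n(k+1)-2$ cells, so two of the columns carry only $k$ entries instead of $k+1$, and the Filling Algorithm is adjusted so that a letter $W$ is skipped whenever the current shortest active column has already been filled to its prescribed height. With these conventions $D\mapsto T(D)$ is again a bijection of $\CD_{m,n}$ onto this family, so $T(D)$ merely records $D$; and, exactly as when $m=kn+1$, the top row of $T(D)$ is the set of positions of the $n$ letters $S$ in $\SW(D)$.

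Put $P=\Phi^{-1}(D)$ and let $0=\rho_1<\rho_2<\cdots<\rho_{m+n}$ be the increasing rank sequence of $P$, so that position $i$ in any length-$(m+n)$ word attached to $P$ refers to the vertex of rank $\rho_i$. By the definition of the sweep map, $\SW(D)=\SW(\Phi(P))$ has an $S$ in position $i$ exactly when the vertex of $P$ of rank $\rho_i$ is the South end of a North step of $P$, and $\NE(D)$ has an $N$ in position $i$ exactly when that vertex is the North end of a North step of $P$. Hence, writing $b_1,\dots,b_n$ for the bottom entries of the $n$ columns of $T(D)$, Theorem~\ref{t-I.3} asserts precisely that $\{\,b_1-1,\dots,b_n-1\,\}$ is the set of indices $i$ for which $\rho_i$ is the rank of a North end of a North step of $P$.

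The heart of the matter --- the only step requiring genuine work, just as for Theorem~\ref{t-I.1} --- is to run the Filling Algorithm in lock step with the construction of the increasing rank sequence of $P$, maintaining the invariant that the $j$-th column to be opened (the one whose top entry is the index of the $j$-th, in rank order, South end of a North step of $P$) is completed with bottom entry $b_j$ equal to $1$ \emph{more} than the index of the matching North end. This is where $m=kn-1$ flips a sign relative to Theorem~\ref{t-I.1}: since a North step raises the rank by $m\equiv-1\pmod n$, the vertices of $P$ at height $t$ lie in the residue class $-t$ modulo $n$, and the same arithmetic that for $m=kn+1$ left the North end one index \emph{past} the bottom of its column now leaves it one index \emph{before} it; this also pins down which two columns fall one cell short and shows that the algorithm truncates exactly those at the correct moment. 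As in the $kn+1$ case, the greedy rule ``extend the shortest active column'' is justified by the fact that $P$ stays weakly above the diagonal.

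Granting the invariant, the conclusion is immediate: the bottom entries $b_1,\dots,b_n$ of $T(D)$ give $\{\,b_j-1\,\}$ as the set of indices of the North ends of the North steps of $\Phi^{-1}(D)$, that is, as the set of positions of the letters $N$ in $\NE(D)$, with the remaining $m$ positions occupied by $E$. The main obstacle is the lock-step induction just described; the bookkeeping needed for the two short columns of the $kn-1$ shape is the one feature genuinely new compared with the proof of Theorem~\ref{t-I.1}.
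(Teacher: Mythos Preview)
Your sketch departs from the paper in two essential ways, and both create real gaps.

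\textbf{First, the setup.} You invent a shape for $T(D)$ when $m=kn-1$: an $n$-column array with two columns of height $k$ instead of $k+1$, together with a modified filling rule that ``skips'' certain $W$'s. None of this is in the paper, and the skipping rule as you state it is not coherent (if a $W$ is skipped, the entry index does not advance, so the tableau ends up with fewer than $m+n-1$ cells). More seriously, under your convention the ``bottom row'' of $T(D)$ would have fewer than $n$ entries in row $k+1$, whereas Theorem~\ref{t-I.3} requires $n$ positions for the $N$'s; you patch this by reinterpreting ``bottom row'' as ``bottom entry of each column,'' but that is your reading, not the paper's. The paper simply declares the $kn-1$ case ``analogous'' without spelling out the tableau, so you are right that something must be supplied --- but you have not shown that your choice is the one for which the theorem is even true, let alone the one the authors intend.

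\textbf{Second, the argument itself.} The paper does \emph{not} prove Theorem~\ref{t-I.1} by a direct lock-step induction of the kind you describe; rather, Theorem~\ref{t-I.1} is a corollary of Theorem~\ref{t-I.2} (proved as Theorem~\ref{t-2.1}). The crucial fact there is the inequality $\rank(i+1)>\rank(i)$, established by an induction on $n$ that removes the first column of $T$. Once that holds, the walk edge $t_j\to b_j+1$ carries label $S$, so $\rho_{b_j+1}=\rho_{t_j}+m$ is a North-end rank, and position $b_j+1$ is an $N$ in $\NE(D)$. The $kn-1$ case is handled the same way via Theorem~\ref{t-I.4}: the walk edge is $t_j\to b_j-1$, whence $b_j-1$ is the $N$-position. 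Your ``lock-step invariant'' is simply a restatement of the conclusion; you assert it and gesture at the congruence $m\equiv -1\pmod n$, but residue classes alone do not determine positions in the \emph{sorted} rank sequence, and you give no mechanism replacing the rank-monotonicity argument of Theorem~\ref{t-2.1}. So the ``heart of the matter,'' as you call it, is precisely what remains unproved.
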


\begin{theo}
  \label{t-I.4}
  In  the Fuss case $m=kn-1$,  the permutation $\sigma(D)$ that rearranges the letters of $\SW(D)$ in the order that gives the successive
North and East steps of $\phi^{-1}(D)$, is obtained by a walk through the entries in $T(D)$
governed by the following instructions:
\begin{enumerate}
  \item Write in bold  all the entries in $T(D)$  that are by $1$ less than a bottom row entry.
\item Go to $1$ and write $1$.
\item If you are in   row  $1$ go down the column to row  $k+1$. If the entry there is $r$  go to  $r-1$   and write  $r-1$.
\item If you are not in the first row go up the column one row. If the entry there  is $r$ and is not bold write $r$.
\item If the entry there  is $r$ and bold go to $r+1$ and continue until
you run into a normal entry, then write it.

\end{enumerate}
\end{theo}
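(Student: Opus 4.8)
The plan is to transcribe the proof of Theorem~\ref{t-I.2}, making only the changes forced by the arithmetic of $m=kn-1$ in place of $m=kn+1$, and to point out which step produces each of the three sign reversals in the instructions --- ``$1$ less'' in~(1), ``$r-1$'' in~(3), ``$r+1$'' in~(5) --- and the one place where the $kn-1$ case is genuinely new rather than a mechanical substitution.

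Write $P:=\Phi^{-1}(D)$ and let $\pi_1<\pi_2<\cdots<\pi_{m+n}$ be its increasing rank sequence; recall that $\SW(D)$ is the word of $S$'s and $W$'s read off this sequence, so its $S$-positions are precisely the positions of the south ends of $P$, and that $T=T(D)$ is produced from $\SW(D)$ by the Filling Algorithm in the form appropriate to $m=kn-1$. The first step is to re-prove, for $m=kn-1$, the combinatorial description of $T$ that drives the walk: the columns of $T$ correspond, in increasing order of south-end rank, to the $N$-steps of $P$; the top entry of a column is the rank-position of that $N$-step's south end; and the remaining entries, in increasing order, are the rank-positions of the vertices of $P$ swept --- in the construction of $\Phi(P)=D$ --- after that south end and up to the east end that closes the column. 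From this one extracts the two facts the walk relies on. (i)~One $N$-step from a south end $\rho=\pi_p$ at the top of its column moves the rank-position to $r-1$, where $r$ is the bottom entry of that column: in this case $\rho+m$ turns out to be the immediate \emph{predecessor}, inside the rank set of $P$, of the largest rank appearing in the column --- that rank sitting at position $r$ --- rather than its immediate successor as when $m=kn+1$. This ``predecessor versus successor'' flip, which stems from $m\equiv-1\pmod n$, is exactly what converts the ``$+1$'' of Theorem~\ref{t-I.1} and of rule~(3) into the ``$-1$'' of Theorem~\ref{t-I.3} and of the present rule~(3), and it makes the bold entries of rule~(1) --- the bottom-row entries decreased by $1$ --- coincide with the north-end positions. (ii)~One $E$-step from a vertex in a row below the first moves to the cell one row up, but any cell holding a north-end position (a bold entry) must be passed over; this passing-over is what rule~(5) performs, in the direction ``$r+1$'' here (dual to ``$r-1$'' for $m=kn+1$). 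Each of (i) and (ii) is established exactly as for $m=kn+1$, with the elementary estimate governed now by $(k-1)n<m<kn$ in place of $kn<m<(k+1)n$.

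Granting (i) and (ii), the theorem follows by an induction along the vertices $v_0,v_1,v_2,\dots$ of $P$: started at $v_0$ (rank $\pi_1$, position $1$, instruction~(2)), the walk writes at stage $i$ the rank-position of $v_i$ --- by~(i) at a south end, via instruction~(3), and by~(ii) at every other vertex, via instructions~(4)--(5). Thus the word produced is the sequence of rank-positions of $v_0,v_1,v_2,\dots$, which is precisely the permutation $\sigma(D)$ that reorders the letters of $\SW(D)$ into the step order of $P=\Phi^{-1}(D)$; writing $S$ above the top-row entries and $W$ above the rest then recovers the SW word of $P$, and hence $P$.

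The main obstacle is fact (i): showing that a single $N$-step shifts the rank-position by exactly $-1$ relative to the bottom of its column, equivalently that the north-end positions form the bottom row minus $1$. What must be controlled is the placement of $\rho+m$ within the rank set of $P$ relative to the column maxima; where the surplus unit in $m=kn+1$ makes this resolve cleanly to ``$+1$'', here the deficit unit has to be followed through the east steps that accompany each north step under the sweep and shown to land on ``$-1$'' and on nothing coarser. Once that is settled, the well-definedness of the $m=kn-1$ Filling Algorithm, its output being a standard tableau of the prescribed shape, the bijection $\CD_{m,n}\leftrightarrow\TAU_n^k$, and the bookkeeping for the bold-skipping all go through as in the proofs of Theorems~\ref{t-I.1}--\ref{t-I.3}, and the remainder is the proof of Theorem~\ref{t-I.2} reread with the three sign changes.
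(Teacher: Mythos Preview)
The paper does not give a standalone proof of Theorem~\ref{t-I.4}; it merely declares the $m=kn-1$ case ``analogous'' and records the changed instructions. So the benchmark is the paper's proof of Theorem~\ref{t-I.2}, carried out as Lemmas~\ref{l-1.2}, \ref{l-1.3}, \ref{l-2.1} and Theorem~\ref{t-2.1}.

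You claim to ``transcribe the proof of Theorem~\ref{t-I.2},'' but what you describe is a different argument. The paper proves~\ref{t-I.2} as follows: show the walk has all in- and out-degrees~$1$ (Lemma~\ref{l-1.2}); show by induction on~$n$, via removing the first column of~$T$, that the walk is a single cycle (Lemma~\ref{l-2.1}); define~$\bar D$ from the walk and prove $\rank(i{+}1)>\rank(i)$ for every~$i$, again by induction on~$n$, comparing $w(T)$ to $w(T^*)$ and invoking the arithmetic Lemma~\ref{l-1.3} to pass from $(m',n')$ to $(m,n)$ (this is the case analysis of Theorem~\ref{t-2.1}). Nowhere does the paper identify the columns of~$T$ with packets of rank-positions of~$\Phi^{-1}(D)$, and it does not prove your facts~(i) and~(ii) for $m=kn+1$; so the sentence ``each of (i) and (ii) is established exactly as for $m=kn+1$'' has no referent.

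Your route---take $P=\Phi^{-1}(D)$ as given and show the walk visits the rank-positions of~$P$'s vertices in path order, so that the walk is the Bipartite Algorithm read through~$T$ once Theorem~\ref{t-I.3} supplies the north-end positions---is a genuinely different strategy and, if it goes through, is more conceptual than the paper's double induction. But the content sits exactly where you flag ``the main obstacle'': you need a structural lemma saying that moving one row up in a column of~$T$ corresponds to an $E$-step (subtracting~$n$ from the rank), modulo skipping the north-end positions; and that the bottom-of-column entry is tied to the north end of that column's $S$ in the way your~(i) asserts. Neither statement is proved in the paper for either sign of~$\pm1$, and you have not proved them here. Either supply that column-structure lemma directly from the Filling Algorithm, or actually carry out the paper's scheme: re-prove the $m=kn-1$ analogues of Lemmas~\ref{l-1.2}, \ref{l-1.3}, \ref{l-2.1} and the case analysis of Theorem~\ref{t-2.1}, with the three sign changes you correctly identify.
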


The paper is organized as follows. The main results are presented in this introduction. We give an explicit algorithm in Theorem \ref{t-I.2} to invert the sweep map for Fuss Dyck paths.
Section \ref{s-basic} includes the basic facts of the sweep map. We also explain the idea for proving Theorem \ref{t-I.2}. The detailed proof are presented in Section \ref{s-proof}. We discuss some combinatorial consequences in Section \ref{s-consequence}.

\section{Some basic auxiliary facts about the sweep map\label{s-basic}}

In this section we will present some observations about the sweep map and rational Dyck paths that are interesting by themselves. We will also outline the succession of steps we plan to use to prove our results. Our presentation here is aimed at conveying the basic ideas underlying    our arguments.  Proofs that are too technical will be replaced by illustrations based on examples. The corresponding formal proofs will be given in  Section \ref{s-proof}.

The proof  that $D$ can be recovered from the sole knowledge of the two words
$\SW\big(\Phi(D)\big)$  and  $\NE\big(\Phi(D)\big)$ is so elementary and simple that it must be included.
A single example should  suffice to get across all the steps of the general algorithm, called the Bipartite Algorithm.
To this end we will apply this algorithm to our first example in Figure \ref{fig:sweep-image} and show how the two words in \eqref{e-II.2} and \eqref{e-II.3} determine the rank sequence    $r(D)$ in \eqref{e-II.1}.

Let us first  label separately  the $S$ and $W$ letters  of $\SW\big(\Phi(D)\big)$ from left to right obtaining
$$
S_1S_2W_1S_3S_4W_2S_5W_3W_4W_5W_6W_7.
$$
Doing the same with $\NE\big(\Phi(D)\big)$ gives
$$
E_1E_2E_3E_4N_1E_5E_6N_2E_7N_3N_4N_5.
$$
To make sure we keep in mind  how these two words were constructed, we will
put them together as a three line array with the rank sequence in the middle row
and place on the right the path that originated them.
\begin{align}
  \begin{array}{cccccccccccc}
  S_1 & S_2 & W_1 & S_3 & S_4 & W_2 & S_5 & W_3 & W_4 & W_5 & W_6 & W_7\\
0 & 3 & 5 & 6 & 7 & 8 & 9 &  10 & 11 &  13 & 14 & 16\\
E_1 & E_2 & E_3 & E_4 & N_1 & E_5 & E_6 & N_2 & E_7 & N_3 & N_4 & N_5
\end{array}
\Leftarrow D=
\hskip -.4in  \vcenter{\includegraphics[height=1.2 in]{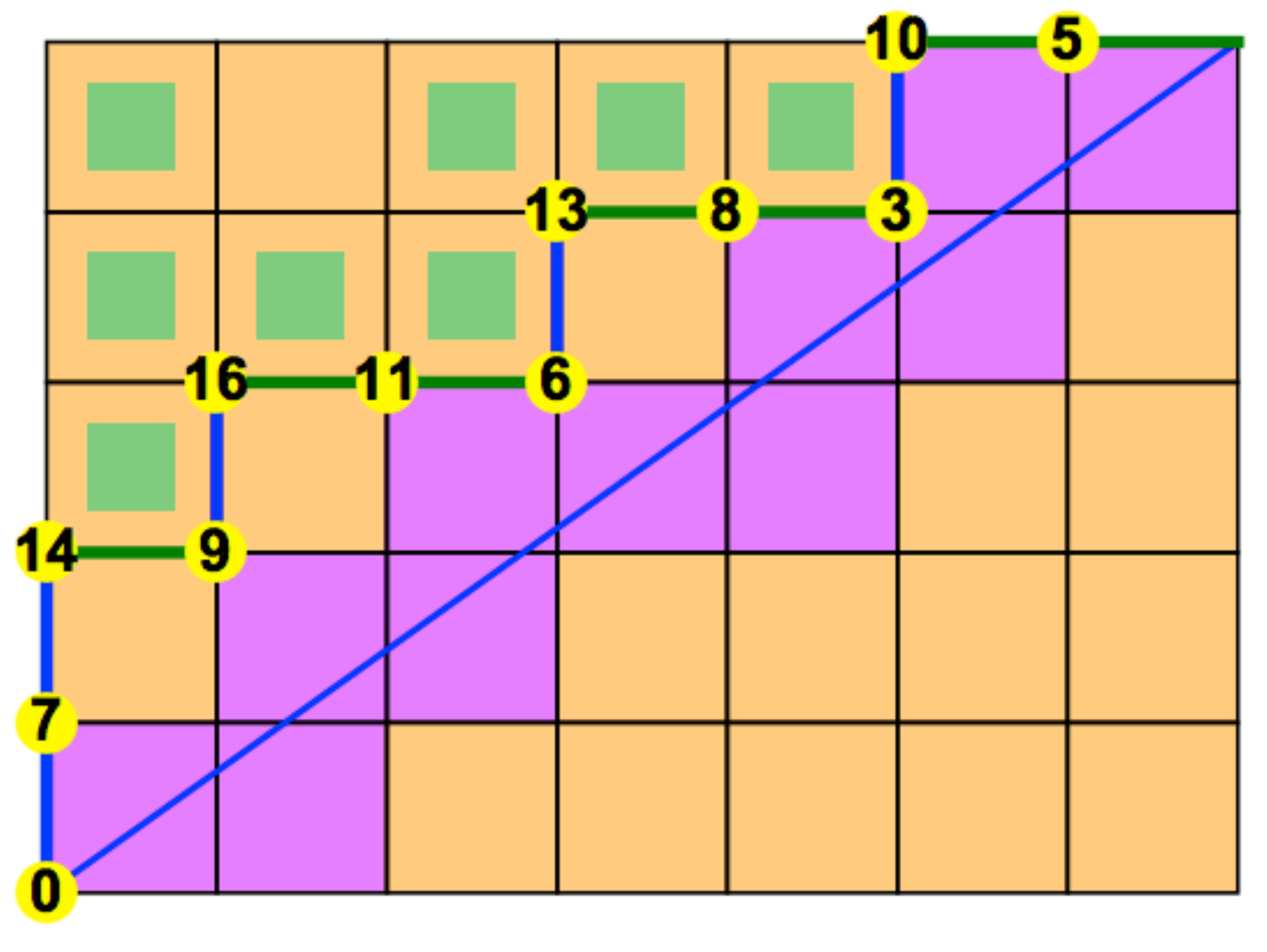}}
\label{e-1.1}
\end{align}

Recall that the rank sequence is the increasing rearrangement of the collection of ranks of the vertices  of $D$. The $SW$ word (or $SW$ sequence) $\SW\big(\Phi(D)\big)$ is obtained by  placing above a given rank $r$ an $S$ or a $W$ according to the nature of  the step of $D$
that starts at a vertex with that rank. Likewise  the $EN$ sequence
is obtained  by  placing below a given rank $r$ an $N$ or an $E$ according to the nature of  the step of $D$ that ends at a vertex with that rank. It follows that letters in the same position in both
 $\SW\big(\Phi(D)\big)$ and  $\NE\big(\Phi(D)\big)$ have the same rank. To be precise
 we should let $r(S_i)$ denote the rank of the $i$-th South end (of a North step).
 By abuse of notation, we will use the  letters themselves to denote  their  corresponding  ranks.
 In this vein we can write
 \begin{align*}
   &S_1<S_2<\cdots <S_n,\quad \  W_1<W_2<\cdots <W_m;
 \\
 &N_1<N_2<\cdots <N_n,\quad \  E_1<E_2<\cdots <E_m.
\end{align*}
Using this notation, it  follows that the ranks of the North ends
 are none other than $S_1+m,S_2+m,\ldots ,S_n+m$. Likewise
we may also identify the ranks of the East ends as
 $W_1-n,W_2-n,\ldots ,W_m-n $. Since both these sequences are increasing it follows that we may  also write
\begin{align}
  \label{e-NS-EW}
 a)\quad N_i=S_i+m\qquad  \hbox{and} \qquad
 b)\quad E_j=W_j-n .
\end{align}

This can be clearly seen in \eqref{e-1.1}. For instance, the rank of $S_3$ is $6$ so the rank of $N_3$ should be $6+7$ and accordingly above $N_3$ we have $13$. Likewise the rank of  $E_3$
is $5$ and accordingly below  $W_3$ we have $10$.

 These simple observations yield us  an algorithm for recovering the sequence of ranks directly from $\SW \big(\Phi(D)\big)$  and $\NE\big(\Phi(D)\big)$.  The idea is to construct a bipartite graph  by letting  one set of vertices  of the graph be letters of $\SW\big(\Phi(D)\big)$  and the other set of vertices be
  the letters of $\NE\big(\Phi(D)\big)$. The  edges are then    the arrows $S_i\RA N_i$, $W_j\RA E_j$ and the vertical segments joining
 letters in identical positions in $\SW \big(\Phi(D)\big)$ and $\NE\big(\Phi(D)\big)$.
 This given,  by means of the two identities in \eqref{e-NS-EW}  we will reconstruct the rank sequence $r(D)$. Again we will use the  example in \eqref{e-1.1} to communicate the general algorithm. In this case we obtain the following bipartite graph, where for simplicity we have omitted the vertical edges. The solution $r(D)$ should be understood as resulting from the  progressive construction of the unique Eulerian path that starts  and ends at the $0$ rank.
 $$
 \hskip -.2in \vcenter{\includegraphics[height=1.5 in]{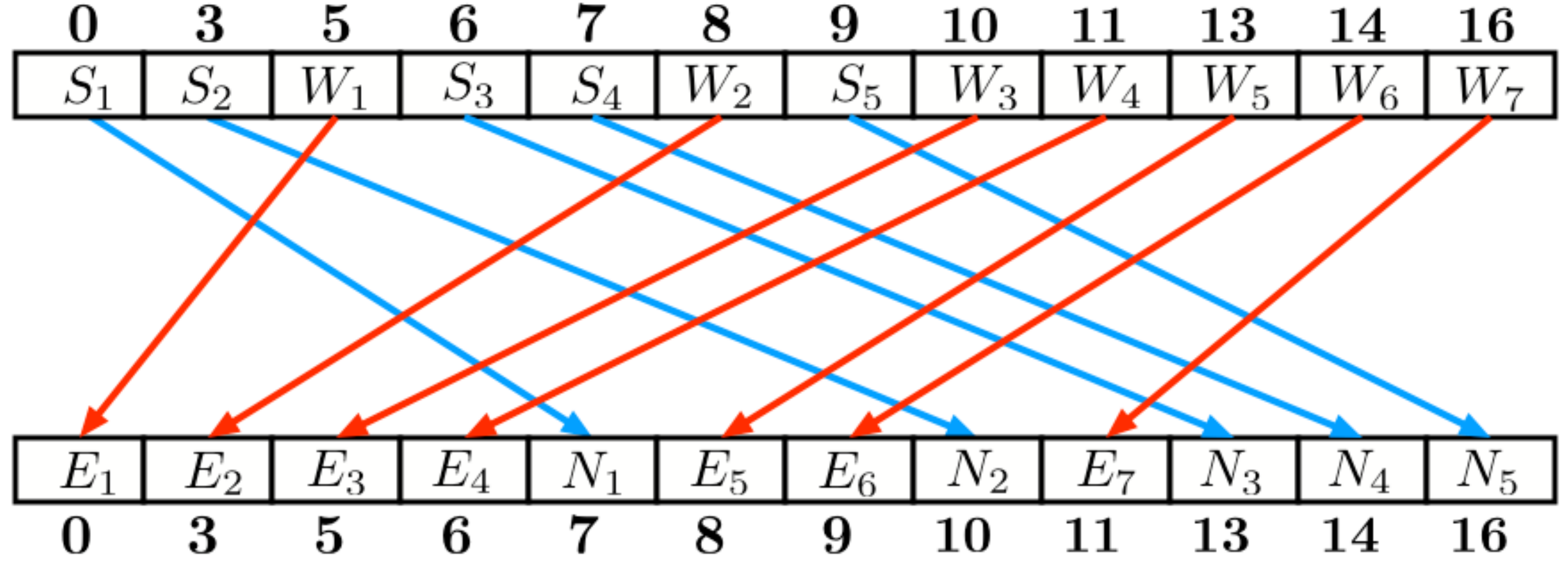}}
  \hskip -2.8in\qquad\qquad
  \Longleftarrow \hskip -.2in
  \vcenter{\includegraphics[height=1.5 in]{RATTO.pdf}}
$$

\noindent{\emph{Illustration of the Bipartite Algorithm}}

\begin{itemize}
  \item Put   $0$ above $S_1$.

\item  Follow the arrow $S_1\RA N_1$ and put   $0+7$ below $N_1$. (Using  (\ref{e-NS-EW} a)).   Put $7$ above $S_4$.

\item  Follow the arrow $S_4\RA N_4$ and put   $14$ below $N_4$, Put  $14$ above $W_6$.

\item  Follow the arrow $W_6\RA E_6$. Put   $14-5=9$ below $E_6$.  (Using  (\ref{e-NS-EW} b)). Put   $9$ above $S_5$.

\item  Follow the arrow $S_5\RA N_5$. Put   $16$ below $N_5$. Put  $16$ above $W_7$.

\item  Follow the arrow $W_7\RA E_7$. Put $11$ below $E_7$.
Put  $11$ above $W_4$.

\item  Follow the arrow $W_4\RA E_4$. Put $6$ below $E_4$.
Put  $6$ above $S_3$.

\item  Follow the arrow $S_3\RA N_3$. Put $13$ below $N_3$.
Put  $13$ above $W_5$.

\item  Follow the arrow $W_5\RA E_5$. Put $8$ below $E_5$.
Put  $8$ above $W_2$.

\item  Follow the arrow $W_2\RA E_2$. Put $3$ below $E_2$.
Put  $3$ above $S_2$.

\item  Follow the arrow $S_2\RA N_2$. Put $10$ below $N_2$.
Put  $10$ above $W_3$.

\item  Follow the arrow $W_3\RA E_3$. Put $5$ below $E_3$.
Put  $5$ above $W_1$.

\item  Follow the arrow $W_1\RA E_1$. Put $0$ below $E_1$.
Close the path.

\end{itemize}

The immediate and intended by-product of   this algorithm is the rank sequence $r(D)$.
The not intended but important byproduct is the $SW$ sequence of
$D$ itself. Indeed, recording the letters above which  we place a rank
in the above succession of steps we get the word
\begin{align}
  \label{e-1.3}
SSWSWWSWWSWW.
\end{align}
It should not be  surprising that this is indeed the $SW$ sequence of $D$,
since the  algorithm follows exactly the recipe that we used to rank   the vertices of $D$.

There is a natural involution on the bipartite graph: We rotate the graph by 180 degrees and make the exchanges $S\leftrightarrow N$ and $W \leftrightarrow E$. Then the rank sequence will become its rank complement, i.e., $M-r_{m+n}, M-r_{m+n-1}, \cdots, M-r_1$, where $M=\max(r(D))=r_{m+n}$. When we focus on the resulting Dyck paths, this involution gives the \emph{rank complement} involution
$D\mapsto \widehat{D}$ on $\CD_{m,n}$ in \cite{xin-mncore}, where $\widehat{D}$ was written as $\overline{D}$. Geometrically, if we cut $D$ at the node of the highest rank as $AB$, i.e., $A$ followed by $B$, then $\widehat{D}$ is obtained by rotating $BA$ by 180 degrees. We need to use the following result.

\begin{prop}[\cite{xin-mncore}]\label{p-rank-complement}
 Let $(m,n)$ be a coprime pair. Then the rank complement transformation preserves the dinv statistic. In other words,
for any $(m,n)$-Dyck path $D$, we have $\dinv (\widehat D)= \dinv(D).$
Consequently,
\begin{align}
  \label{e-area-hat}
  \area(\Phi(\widehat D))= \area(\Phi(D)),
\end{align}
where $\SW(\Phi(\widehat D))$ is obtained by reversing $\NE(\Phi(D))\big|_{N\to S, E\to W}$.
\end{prop}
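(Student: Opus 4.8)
The plan is to prove the statement in two parts. First I would establish the geometric description of the rank complement transformation $D \mapsto \widehat{D}$ and verify that it sends $\CD_{m,n}$ to itself; then I would show it preserves $\dinv$, after which the area consequence \eqref{e-area-hat} follows formally from the sweep map's defining property. The key input for the first part is the observation, already used implicitly in the excerpt, that the bipartite-graph involution (rotation by $180^\circ$ together with $S \leftrightarrow N$, $W \leftrightarrow E$) replaces the rank sequence $r_1 < r_2 < \cdots < r_{m+n}$ of $D$ by $M - r_{m+n} < M - r_{m+n-1} < \cdots < M - r_1$, where $M = r_{m+n}$ is the maximal rank. So the first step is to check that this complemented sequence is again a legitimate rank sequence of an $(m,n)$-Dyck path: one uses that the ranks of $D$ are exactly the set $\{im - jn : \text{vertex } (j,i) \text{ on } D\}$ together with the fact that rotating the path $BA$ obtained by cutting $D = AB$ at its highest-rank vertex negates each rank increment ($+m$ north becomes $-m$, i.e. the rotated path reads its steps in reverse with roles swapped), and that the highest rank is achieved at a point where the path switches from north-runs to east-runs so the cut is legitimate. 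This identifies the combinatorial involution with the geometric one $D \mapsto \widehat{D}$ described just before the statement.

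For the dinv-preservation, I would use the rank-pair characterization of dinv recalled in the introduction: a cell above $D$ contributes to $\dinv(D)$ iff $0 < a - b < m+n$, where $a$ is the rank of the vertex at the bottom-left of its column and $b$ the rank of the vertex at the right end of its row. Under $D \mapsto \widehat{D}$ each rank $r$ becomes $M - r$, so a pair $(a,b)$ with $a > b$ corresponds to a pair $(M-b, M-a)$ with $M - b > M - a$, and the quantity $a - b$ is preserved: $(M-b) - (M-a) = a - b$. Thus the set of "dinv-contributing rank differences" is literally unchanged as a multiset. The remaining point is bookkeeping: one must check that cells above $D$ correspond bijectively to cells above $\widehat{D}$ in a way compatible with this rank correspondence — equivalently, that the number of lattice points of a given rank, and the incidence pattern of "column-bottom rank $a$, row-end rank $b$," is combinatorially the same object after the $180^\circ$ rotation. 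This is where one invokes the result of \cite{xin-mncore} cited in the statement; I would either quote it directly or, if reproving, note that rotating $BA$ turns the partition-above-the-path of $D$ into the partition-above-the-path of $\widehat{D}$ with rows and columns transposed and rank labels negated, so the involution $a \leftrightarrow M - b$, $b \leftrightarrow M - a$ on the relevant vertex pairs is a bijection between dinv-contributing cells of $D$ and those of $\widehat{D}$.

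Finally, \eqref{e-area-hat} is immediate: the sweep map satisfies $\area(\Phi(D)) = \dinv(D)$ for every $D \in \CD_{m,n}$ (the "dinv sweeps to area" property quoted in the introduction from \cite{dinv-area}), so $\area(\Phi(\widehat{D})) = \dinv(\widehat{D}) = \dinv(D) = \area(\Phi(D))$. The description of $\SW(\Phi(\widehat{D}))$ as the reversal of $\NE(\Phi(D))$ with $N \to S$, $E \to W$ then just records what the bipartite-graph involution does on the level of the sweep-image words: rotating the three-line array by $180^\circ$ swaps the top ($\SW$) and bottom ($\NE$) rows, reverses their order, and performs the letter exchange, while the rank row becomes its complement — which is exactly the rank sequence of $\widehat{D}$. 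The main obstacle is the middle step: making the cell-to-cell bijection precise enough to conclude equality of $\dinv$, rather than merely equality of the underlying rank-difference multisets; carefully relating the geometry of "cut at the highest rank and rotate" to the partition-above-the-path picture is the technical heart, and I expect to lean on \cite{xin-mncore} there rather than redo it from scratch.
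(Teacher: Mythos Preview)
Your proposal is correct and follows essentially the same approach as the paper. The paper's proof is much terser: it simply cites \cite[Corollary~15]{xin-mncore} for the $\dinv$-preservation and then states that \eqref{e-area-hat} follows ``by applying the sweep map and then by translating the notations,'' whereas you expand on what the cited argument would involve and spell out the bipartite-graph reasoning behind the $\SW/\NE$ description; but the logical skeleton---quote \cite{xin-mncore} for the first part, deduce the second from the $\dinv\to\area$ property of $\Phi$---is the same.
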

\begin{proof}
  The first part is \cite[Corollary~15]{xin-mncore}. Identity \eqref{e-area-hat} is simply obtained by applying the sweep map and then by translating the notations.
\end{proof}

\begin{rem}
  \label{rem-1.1}
For a word  $\om  \in S^nW^m$ and $1\le i\le m+n$ denote by
$a_i(\om)$ and $b_i(\om)$,  the numbers of  ``$W$'' and ``$S$''
respectively that occur  in the first $i$ letters of $\om$.
It is important to notice that we will have $\om=\SW(D)$ for some
$D\in {\cal D}_{m,n}$  if and only if
\begin{align}
  b_i(\om)m-a_i(\om)n\ge 0 \qquad
\hbox{for all $1\le i\le m+n$}.
\label{e-1.4}
\end{align}
Indeed, we can restrict $i$ to the positions of the ``lower corners" of $\omega$ (indeed lower corners of the corresponding path), i.e., where we have $\omega_i\omega_{i+1}=WS$.

The reason is very simple. In fact after $a_i(\om)$ letters $W$ and $b_i(\om)$ letters $S$, the corresponding path has reached a lattice point of coordinates $\big(a_i(\om),b_i(\om)\big)$, this point is above the diagonal $(0,0)\RA (m,n)$ if and only if
$$
{b_i(\om)\over a_i(\om)}\ge{n\over m}.
$$
Of course the coprimality of $m,n$ forces the inequality to be strict except for $i=m+n$.
\end{rem}

When applying Remark \ref{rem-1.1} to the Fuss case, we obtain the following.
\begin{cor}\label{c-S-positions}
In the Fuss case $m=kn+1$, an increasing sequence $(t_1,t_2,\dots, t_n)$ is the positions of the $S$'s in the SW sequence of $D\in \CD_{m,n}$ if and only if $t_j \le 1+(j-1)(k+1)$ for all $j\le n$.
\end{cor}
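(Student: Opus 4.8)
The plan is to translate the general criterion of Remark~\ref{rem-1.1} into the Fuss case and then simplify. Let $\om=\SW(D)\in S^nW^m$ with the $S$'s occurring in positions $t_1<t_2<\cdots<t_n$. By Remark~\ref{rem-1.1}, $\om$ arises from a Dyck path if and only if $b_i(\om)m-a_i(\om)n\ge 0$ for all $i$, and it suffices to check this at the positions $i$ immediately after an $S$ has been read (equivalently, at the lower corners; but checking right after each $S$ is even cleaner since inserting $W$'s only helps or hurts monotonically). So first I would fix $i=t_j$, the position of the $j$-th $S$. Then $b_{t_j}(\om)=j$ and $a_{t_j}(\om)=t_j-j$, so the inequality reads $jm-(t_j-j)n\ge 0$, i.e. $j(m+n)\ge t_j\,n$, i.e. $t_j\le j(m+n)/n$.

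The second step is to substitute $m=kn+1$, so $m+n=(k+1)n+1$, giving $t_j\le j(k+1)+j/n$. Since $t_j$ is an integer and $1\le j\le n$, the term $j/n$ satisfies $0<j/n\le 1$, and it equals $1$ exactly when $j=n$. Hence for $j<n$ the condition $t_j\le j(k+1)+j/n$ is equivalent to $t_j\le j(k+1)$ (the floor of the right side), while for $j=n$ it is $t_n\le n(k+1)+1=m+n-1$, which is automatically satisfied since $t_n\le m+n$ and in fact $t_n\le m+n-1$ because the last letter of $\om$ must be a $W$ (the path ends with an East step, as $m,n$ are coprime and the path stays weakly above a diagonal of slope $<1$... more simply: $t_n=m+n$ would force $b_{m+n}m-a_{m+n}n=nm-(m)n$... let me instead just note $t_n\le m+n-1$ is forced by coprimality exactly as in Remark~\ref{rem-1.1}). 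Thus the condition collapses to $t_j\le j(k+1)$ for all $j\le n$, which I would then reindex: writing the bound as $t_j\le (k+1)+(j-1)(k+1)$ does not match the claim, so I must be more careful — the claimed bound is $t_j\le 1+(j-1)(k+1)$, which is strictly smaller. This discrepancy signals that one should shift which inequality is being used.

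The resolution, which is the point I expect to be the main obstacle, is that one should check the constraint not right after the $j$-th $S$ but right \emph{before} it: at position $i=t_j-1$ we have just read $j-1$ copies of $S$ and $t_j-1-(j-1)=t_j-j$ copies of $W$... that gives the same thing. The correct fix is that the binding constraint is actually \eqref{e-1.4} evaluated at the lower corner just \emph{before} reading the $j$-th $S$, together with the observation that between consecutive $S$'s the word is a block of $W$'s, so the worst position in the block $t_{j-1}<i<t_j$ is $i=t_j-1$, where $b_i=j-1$ and $a_i=t_j-1-(j-1)=t_j-j$; the inequality $(j-1)m-(t_j-j)n\ge 0$ reads $t_j\le j + (j-1)m/n = j+(j-1)k+(j-1)/n$, and since $0\le (j-1)/n<1$ with equality to a non-integer unless $j=1$, the integrality of $t_j$ forces $t_j\le j+(j-1)k = 1+(j-1)(k+1)$, exactly the claim. (For $j=1$ this says $t_1\le 1$, i.e. $t_1=1$, which is correct since $\om$ must start with $S$.) Conversely, if $t_j\le 1+(j-1)(k+1)$ for all $j$, then running the same computation backwards shows \eqref{e-1.4} holds at every position (any position lies in some block of $W$'s following the $(j-1)$-th $S$, where the count $b_i m-a_i n$ is minimized at the right end $i=t_j-1$ and we have just checked it there; and the final value at $i=m+n$ is $nm-mn=0\ge 0$). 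Hence the two conditions are equivalent, proving the corollary. The only genuinely delicate point is getting the right "worst-case position" inside each $W$-block and handling the boundary indices $j=1$ and $j=n$, which the integrality of $t_j$ makes clean.
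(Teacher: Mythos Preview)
Your proposal is correct and, once you arrive at the right checkpoint $i=t_j-1$, follows exactly the paper's argument: evaluate \eqref{e-1.4} at the positions just before each $S$, obtain $(j-1)m-(t_j-j)n\ge 0$, substitute $m=kn+1$, and use integrality together with $0\le (j-1)/n<1$ to conclude $t_j\le (j-1)k+j=1+(j-1)(k+1)$. The initial detour through $i=t_j$ is unnecessary and should simply be deleted; otherwise the reasoning and the handling of the converse (minimum of $b_im-a_in$ on each $W$-block occurring at its right endpoint, with the final block terminating at value $0$) match the paper.
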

\begin{proof}
Let $\omega\in S^nW^m$ with $S$'s in positions $t_1,\dots, t_n$. Then
 by Remark \ref{rem-1.1}, $\omega$ is the SW sequence of $D\in \CD_{m,n}$ if and only if for
$b_i(\omega)m-a_i(\omega)n\ge 0$ for all $i=t_j-1$ with $j\le n$ (these positions includes all ``lower corners" of $\omega$),
which is equivalent to
$$(j-1)m-(t_j-j)n\ge 0 \Leftrightarrow (j-1)(kn+1)-(t_j-j)n\ge 0.$$
This can be rewritten as
$$t_j-j \le (j-1)k +\frac{j-1}{n}
 \Leftrightarrow t_j\le (j-1)k+j.$$
This completes the proof.
\end{proof}

Another important consequence of Remark \ref{rem-1.1}  is the following.
\begin{lem}
  \label{l-1.1}
The Filling Algorithm terminates only when all entries $1,2,\ldots ,m+n-1$ have been placed.
\end{lem}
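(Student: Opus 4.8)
The plan is to prove that the Filling Algorithm never gets stuck: at each stage $i$ with $i\le m+n-1$ the move it prescribes --- place $i$ at the top of the first empty column when the $i$-th letter of $\SW(D)$ is $S$, and immediately below the smallest active entry when it is $W$ --- can in fact be carried out. Since one entry is placed for each letter read and the shape of $\TAU_n^k$ has exactly $m+n-1=n(k+1)$ cells, this suffices: the algorithm can only halt after all of $1,2,\dots,m+n-1$ have been placed.

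First I would translate ``stuck at stage $i$'' into an identity about the word $\om:=\SW(D)$. Keep the notation of Remark~\ref{rem-1.1}: $a_i=a_i(\om)$ and $b_i=b_i(\om)$ are the numbers of $W$'s and $S$'s among the first $i$ letters of $\om$. Once $1,\dots,i-1$ have been placed, exactly $b_{i-1}$ columns have been started --- a new column is opened every time an $S$ is read, the first being column $1$, opened by the initial $1$ --- each started column holds between $1$ and $k+1$ entries, and the remaining $n-b_{i-1}$ columns are empty. If $\om_i=S$, opening a new column is possible because $b_{i-1}<b_i\le n$, the word $\om$ having exactly $n$ letters $S$. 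If $\om_i=W$ (so $i\ge 2$ and $b_{i-1}\ge 1$, since the first letter of $\om$ is $S$), there is no active entry exactly when all $b_{i-1}$ started columns are full; since all $i-1$ entries placed so far then lie in those columns, this says $i-1=b_{i-1}(k+1)$, i.e.\ $a_{i-1}=k\,b_{i-1}$.

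It remains to rule out $\om_i=W$ together with $a_{i-1}=k\,b_{i-1}$ for some $i\le m+n-1$. Here I would apply Remark~\ref{rem-1.1}: because $\om=\SW(D)$ with $D\in\CD_{m,n}$, we have $b_i\,m-a_i\,n\ge 0$. For $\om_i=W$ one has $a_i=a_{i-1}+1$ and $b_i=b_{i-1}$, so substituting $a_{i-1}=k\,b_{i-1}$ and $m=kn+1$,
\begin{align*}
0\ \le\ b_i\,m-a_i\,n\ =\ b_{i-1}(kn+1)-(k\,b_{i-1}+1)\,n\ =\ b_{i-1}-n,
\end{align*}
so $b_{i-1}\ge n$, forcing $b_{i-1}=n$ and $a_{i-1}=kn$. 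Then $i-1=a_{i-1}+b_{i-1}=n(k+1)=m+n-1$, i.e.\ $i=m+n$, contradicting $i\le m+n-1$. Hence no stage is stuck, and the algorithm terminates precisely after placing $1,2,\dots,m+n-1$.

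The argument is short --- elementary bookkeeping plus a single use of the Dyck inequality from Remark~\ref{rem-1.1} (one could equally quote Corollary~\ref{c-S-positions} at this point). The one step that genuinely needs care, and which I would spell out in full, is the equivalence between ``stage $i$ has no active entry'' and ``$a_{i-1}=k\,b_{i-1}$'': it rests on the fact that a column opened by an $S$ absorbs exactly $k$ subsequent $W$-entries before it reaches row $k+1$ and ceases to be active, so the started columns are all full simultaneously exactly when the entries placed so far fill them completely.
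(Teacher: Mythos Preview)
Your proof is correct and follows essentially the same route as the paper's: identify that the only obstruction is reading a $W$ with no active entry, translate this into having $\ell$ full columns (your $b_{i-1}$ with $a_{i-1}=k\,b_{i-1}$), and obtain a contradiction with the Dyck inequality of Remark~\ref{rem-1.1} via the computation $b_{i-1}m-a_i n=b_{i-1}-n$. The only cosmetic difference is that the paper notes directly that $\ell<n$ forces the rank $\ell-n$ to be negative, whereas you argue the contrapositive that $b_{i-1}-n\ge 0$ forces $i=m+n$; you also spell out explicitly why the $S$-case cannot get stuck, which the paper leaves implicit.
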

\begin{proof}

The only circumstance that may prematurely stop the filling of the tableau $T(D)$ is when the next letter is a $W$ and there are no remaining active entries. But that can only happen if we have completely filled the first $\l<n$ columns all the way to row $k+1$.
That means that we have processed  $\l$ letters  $S$ and $k\, \l$ letters $W$ and we are to process a $W$. But that means that the
path has reached a lattice point of rank
$$
m\, \l -k\, \l\,  n-n = \l (m-k\, n-1)+\l -n=\l-n<0,
$$
but this contradicts the inequality in \eqref{e-1.4}.
\end{proof}

There are a number of auxiliary properties of the sweep map in the Fuss case that need to be established to prove our basic results.
In particular the permutation $\sigma(D)$ produced by the algorithm stated in Theorem  \ref{t-I.2}   will be shown to consist of a single cycle which visits each of the vertices $1,2,\ldots ,n+m$ once and only once. The proof of this  property  will be divided into two parts.
We show first that our algorithm essentially defines a walk on $1,2,\ldots ,n+m$
  by following a directed graph on these vertices whose
out-degrees  and in-degrees are all equal to $1$. This will guarantee that the walk will result in a permutation. The second part of the proof will use an inductive argument   to establish that this permutation consists of a single cycle.

\def\oD{\overline{D}}

This given, using $\sigma(D)$,  let $\oD$ be the path whose   sequence  $\SW(\oD)$ is obtained by putting in position $i$ an $S$ or  $W$ according as $\sigma_i(D)$   is or is not in the first row of $T(D)$. To show that $\oD=\Phi^{-1}(D)$  we need to prove the following two properties.

\begin{enumerate}
  \item[(1)] {$\oD$ is a path in $\CD_{m,n}$.}
\item[(2)]  {the word $\SW(D)$ is obtained from the increasing rearrangement of the rank sequence  $ r(\oD )$.}
\end{enumerate}

Now recall that the components of $ r(\oD )$ are computed by reading  $\SW(\oD)$ and successively adding an $m$ when we read an $S$ or subtract an $n$ when we read a  $W$.  To make this more precise, let us denote by
 $\sigma_j(D)$  the $j^{th}$ entry in $\sigma(D)$. This given, the change in rank  caused by the $j^{th}$ step of
 $\oD$ is  $m$ if  $\sigma_j(D)$ falls in the first row of $T(D)$ and $-n$ if $\sigma_j(D)$ falls in any other  row of $T(D)$. Since we may   view the permutation $\sigma(D)$ as a walk through the tableau $T(D)$, we can recursively assign a rank to each entry of $T(D)$
 starting with $0$ rank at the first entry of the first row then follow the construction of the ranks of $\oD$
by means of $\sigma(D)$.  Thus to prove that  $D=\Phi(\oD)$  we need only show that the rank of the entry $i$ in $T(D)$ is less than the rank of the entry $i+1$. That means that $D$ itself is
obtained by rearranging the ranks of $\oD$ in increasing order. In other words,  the inverse of the permutation $\sigma(D)$ reorders the ranks of $\oD$ in increasing order.

 This observation is better understood by means of an example. Suppose that the given path $D$ (which  is Fuss with $k=2$ and $n=3$) and the corresponding tableau $T(D)$ are as depicted below.
$$
\vcenter{\includegraphics[height= 1  in]{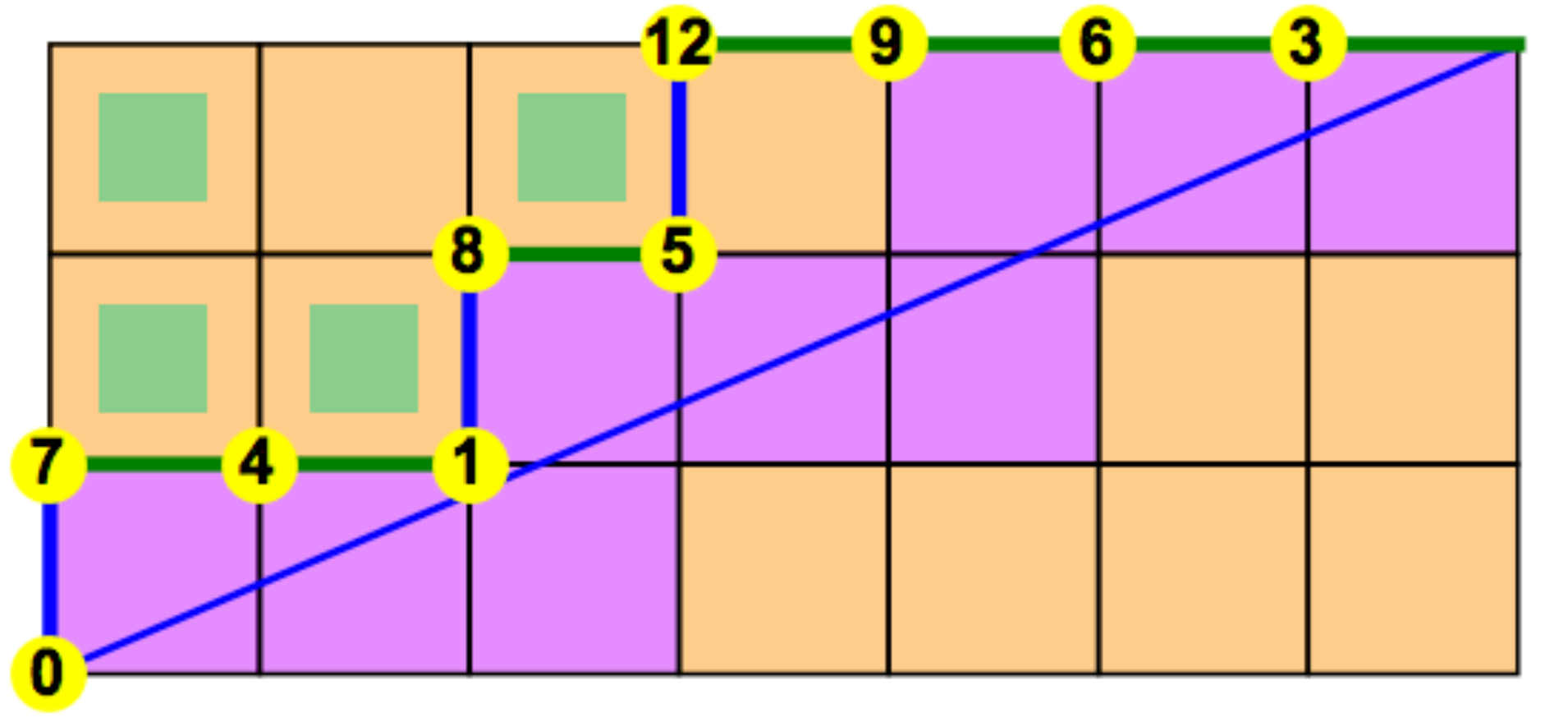}}  \hskip -3 in {\Longleftrightarrow \hskip -.28in
  \vcenter{\includegraphics[height= 1  in]{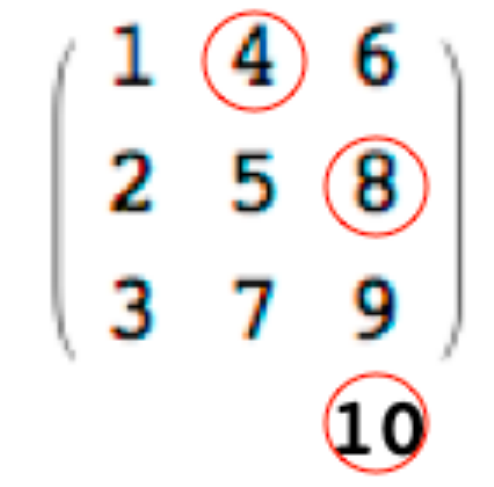}}}.
$$
Carrying out the construction of the permutation  $\sigma(D)$
and the rank sequence of $\oD$ yields the three line array and the path  $\oD$ as follows.

$$
 \vcenter{\includegraphics[height= .6  in]{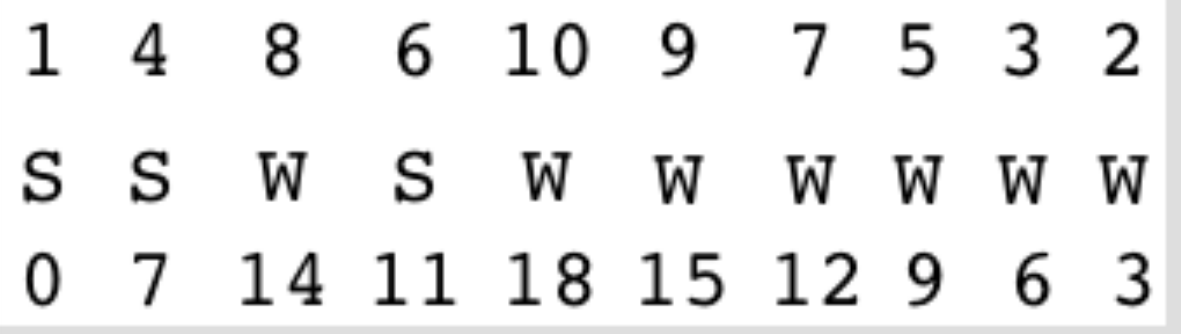}}
 \hskip -3.2 in   \Longrightarrow \hskip -.2 in {
 \vcenter{\includegraphics[height= .9  in]{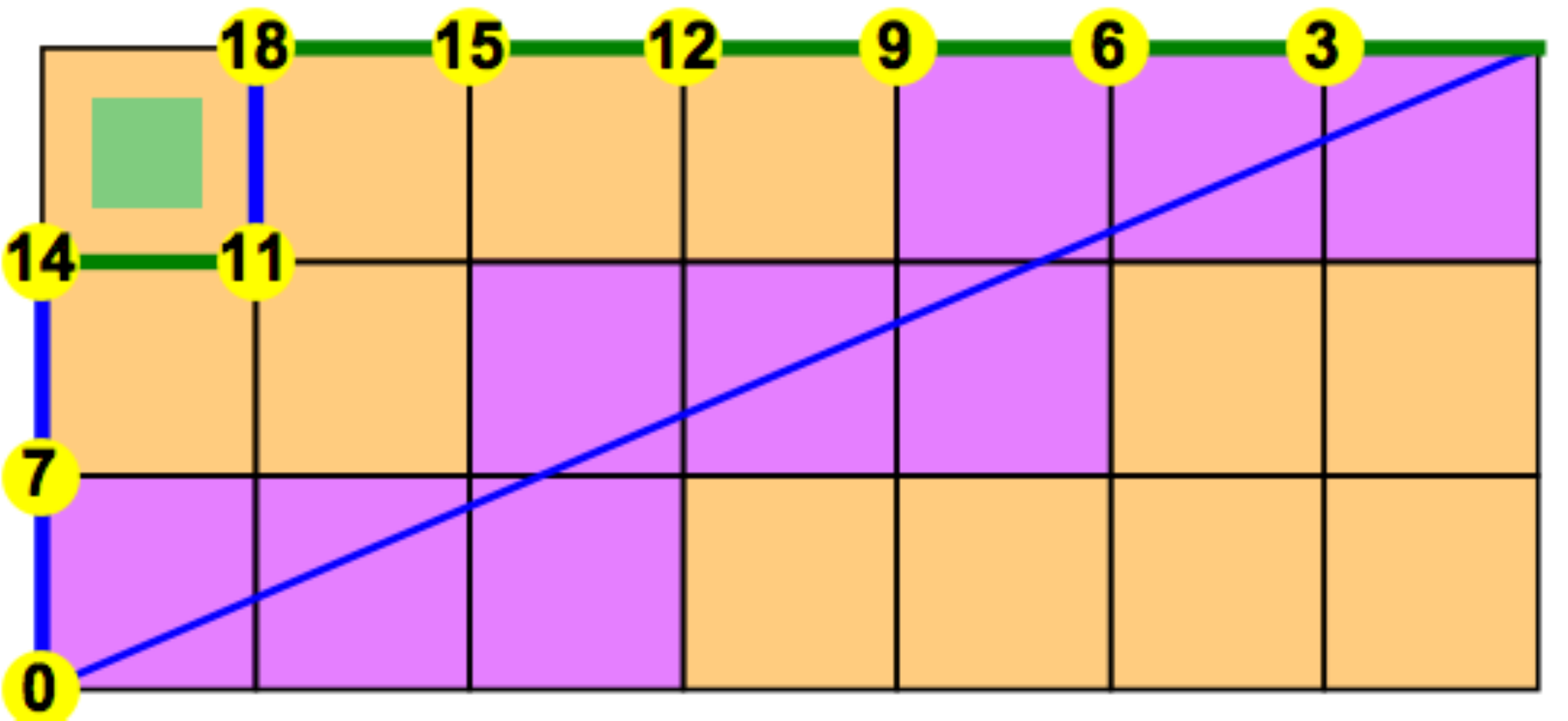}}}
$$
Here the $SW$ sequence in the middle row is obtained  by placing  a letter $S$ under each entry of $\sigma(D)$ that lies in the first row of $ T(D)$  and the letter $W$ in all other positions. This done, we use  this $SW$ sequence to draw $\oD$ and finally we obtain  the rank sequence of $\oD$ by the ranking algorithm. The third row here is constructed  by placing
under each step $S$ or $W$ the rank of its starting lattice point. Now we can visualise the punch line of our argument by simply replacing each entry in $T(D)$ by its corresponding rank. This yields  the tableau below, on the right

$$
 \vcenter{\includegraphics[height= .6  in]{SW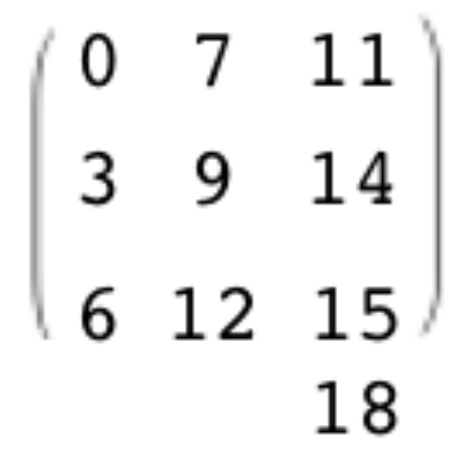}}
 \hskip -3.2 in   \Longrightarrow \hskip -.2 in \vcenter{\includegraphics[height= 1  in]{sranks.pdf}}.
$$
In this case we clearly see  that the rank at the position of $i+1$ in $T(D)$ is invariably larger than the rank at the position of $i$.
This property assures that the ranks are all positive and that reading the letters in the three line array by increasing ranks yields the SW  sequence of $D$, proving that $\oD$   is in $\CD_{kn+1,n}$ and that $\Phi(\oD)=D$.

Let us now continue with this section's presentation  of auxiliary facts.

\begin{lem}
  \label{l-1.2}For $m=kn+1$, in the directed graph that yields our walk
  through $1,2,\ldots , m+n$ all the vertices have both in-degrees and out-degrees equal to one.
\end{lem}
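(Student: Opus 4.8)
The plan is to exhibit, for each vertex $r \in \{1,2,\ldots,m+n\}$, exactly one outgoing edge and exactly one incoming edge in the directed graph defined by the walk of Theorem~\ref{t-I.2}. First I would make the edge set explicit by unwinding the five instructions of the algorithm: from a vertex $r$ sitting in row $1$ of $T(D)$, the unique outgoing edge goes to $q+1$, where $q$ is the row-$(k+1)$ entry in the same column as $r$; from a vertex $r$ not in row $1$, we move up one row to the entry $r'$ in the same column, and then the outgoing edge goes to $r'$ if $r'$ is normal, or, if $r'$ is bold (i.e.\ $r'$ is one more than some bottom-row entry), we decrement repeatedly until we reach the first normal entry $r''$ and the edge goes to $r''$. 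In all cases the instructions are deterministic, so each vertex has out-degree exactly $1$; the only thing to check is that the target is always a well-defined vertex in $\{1,\ldots,m+n\}$ (the bold-decrementing cannot run off the bottom, which follows because $1$ is always in row $1$, hence normal, so the decrement terminates).

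The substantive part is the in-degree count. Here the clean approach is to classify vertices by whether they are bold or normal, and by which row they occupy. A vertex $r$ is the head of an edge of ``type 3'' (arriving from row $1$) precisely when $r-1$ is a bottom-row entry, i.e.\ precisely when $r$ is bold (or $r=m+n$ treated as bold). A vertex $r$ is the head of an edge of ``type 4/5'' (arriving via an up-move, possibly after bold-decrements) precisely when $r$ is normal: indeed, the chain of bold entries immediately above and below a normal entry collapses onto that normal entry, so each normal entry $r$ receives exactly one such edge — namely from the entry directly below $r$ in its column if that entry is normal, or from the run of bold entries sitting directly below it. The crucial bookkeeping observation is that the bold entries are exactly those of the form $b+1$ where $b$ runs over the $n$ bottom-row entries, and by Theorem~\ref{t-I.1} these are exactly the positions of the $N$'s, so there are $n$ bold vertices (one of which is $m+n$), accounting for the $n$ vertices of out-degree coming from row $1$; the remaining $m$ vertices are normal and are fed by the up-moves. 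Counting: $n$ columns each contribute one ``type 3'' target and $k+1$ rows worth of ``up-move'' targets, and a short double-count shows total in-degree $=\,$ total out-degree $=\,m+n$, with every vertex hit exactly once.

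The main obstacle I anticipate is verifying that the bold-decrement procedure in instruction 5 always lands on a normal entry that has not already been claimed as the head of another edge — in other words, that a maximal run of consecutive bold entries $r, r-1, \ldots, r-j+1$ (as values, not as tableau cells) has a unique normal predecessor $r-j$, and that this $r-j$ is not simultaneously reachable by an up-move from elsewhere. This requires the structural fact, provable from Corollary~\ref{c-S-positions} and the horizontal-strip defining property of $\TAU_n^k$, that consecutive bold values correspond to cells that are ``stacked'' in a controlled way relative to the bottom row; I would isolate this as a preliminary combinatorial claim about how bold cells sit inside $T(D)$ and then feed it into the edge-counting. Once that claim is in hand, matching the $n$ bold heads with the $n$ row-$1$ tails and the $m$ normal heads with the $m$ up-move tails is a routine finite check, and the conclusion that all in-degrees and out-degrees equal $1$ follows immediately.
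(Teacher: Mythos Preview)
Your proposal follows the same strategy as the paper's proof: out-degree $1$ is immediate from the deterministic rules, and for in-degree one separates bold targets (fed by row-$1$ sources) from normal targets (fed by up-moves with possible decrements). Where you diverge is in anticipating difficulty that is not actually there. The single observation that dissolves your ``main obstacle'' is that $r+1$ is bold \emph{if and only if} $r$ sits in row $k+1$ of $T(D)$. Consequently, a normal $r$ not in row $k+1$ has $r+1$ normal, so no decrement chain can ever land on $r$; its unique in-edge is from the cell directly below it in its column. A normal $r$ in row $k+1$ has no cell below it, so its only possible in-edge is via a decrement chain; the paper locates the source by taking the least $u\ge 1$ with $r+u$ not in row $k+1$ (treating $r+u=m+n$ as a boundary case) and noting that the cell directly below $r+u$ feeds $r$ through the bold run $r+u,r+u-1,\ldots,r+1$. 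No detour through Corollary~\ref{c-S-positions} or the horizontal-strip condition is needed, and once each vertex has an explicit in-edge the global double-count you propose is superfluous.

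One phrasing issue: your sentence ``from the entry directly below $r$ in its column if that entry is normal, or from the run of bold entries sitting directly below it'' is garbled. The bold/normal status of the \emph{source} cell is irrelevant, and the bold entries in a decrement run are the \emph{values} $r+1,r+2,\ldots$ scattered through the tableau, not cells sitting below $r$ in its column.
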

\begin{proof}
  The out-degree property is evident from our definition of the walk. Thus we only need to prove the in-degree property.

Let $r \in \{1,2,\ldots ,m+n\}$ be one of the entries in the
modified tableau. 
If $r$ is bold , then the in-edge is coming from an entry in row $1$ and we are done in this case. We can thus  assume that $r$ is not bold.

\begin{enumerate}

\item[i)]    If  $r$ is not in row $k+1$, then the in-edge comes  from an entry directly  below it.

\item[ii)]   If $r$ is   in row $k+1$, then $r+1$ is bold.

\begin{enumerate}
 \item[a)]  $r+1$  is not in row $k+1$. Then the in-edge is coming  from an entry directly  below $r+1$.

 \item[b)] $r+1$ is  in row $k+1$, then $r+2$ is bold
and if $r+2$ is in not in row $k+1$ then  the in-edge is coming  from an entry directly  below $r+2$. More generally assume that
$r+1,r+2,\cdots,r+u-1$ are all in row $k+1$ but $r+u$ is not in row $k+1$ then the in-edge is coming from an entry directly below $r+u$. This must eventually happen since row $k+1$ has only $n$ entries. The only exceptional case is when
$r+u=m+n$ but then $r+u$ leads to $r+u-1$ and then the in-edge comes from $m+n$.
\end{enumerate}
\end{enumerate}

This completes our proof.
\end{proof}

\begin{lem}
  \label{l-1.3}
  Let $m=kn+1$, $m'=kn'+1$ with $n'=n-1$. Then for any
$0\le a\le m'$  and $0\le b\le n'$ with $a+b<m'+m'$ we have
\begin{align}\label{e-1.5}
bm'-an'\ge 0\qquad \Longrightarrow \qquad bm -an \ge 0  .
\end{align}
\end{lem}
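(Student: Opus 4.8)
The statement is an elementary inequality, so the plan is to reduce it to a transparent algebraic manipulation. Write $m = kn+1$ and $m' = kn' + 1$ with $n' = n-1$, so that $m - m' = k(n - n') = k$ and $n - n' = 1$. Starting from the hypothesis $bm' - an' \ge 0$, I want to compare it with $bm - an$. Subtracting, we get
\begin{align*}
(bm - an) - (bm' - an') = b(m - m') - a(n - n') = bk - a.
\end{align*}
So the implication $bm' - an' \ge 0 \Rightarrow bm - an \ge 0$ holds automatically whenever $bk - a \ge 0$, i.e. whenever $a \le bk$. Thus the only case requiring thought is $a > bk$.

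So suppose $a > bk$, i.e. $a \ge bk + 1$. The plan is to show that in this range the hypothesis $bm' - an' \ge 0$ is itself quite restrictive, and in fact forces $bm - an \ge 0$ directly. Indeed $bm' - an' = b(kn'+1) - an' = bkn' - an' + b = n'(bk - a) + b$. If $a \ge bk+1$ then $bk - a \le -1$, so $bm' - an' \le -n' + b = -(n-1) + b = b - n + 1$; combined with $b \le n' = n-1$ this gives $bm' - an' \le 0$, with equality only if $b = n-1$ and $a = bk+1$. Hence in the range $a > bk$ the hypothesis $bm' - an' \ge 0$ can hold only at the single point $b = n-1$, $a = (n-1)k + 1 = m'$. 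For that point one computes $bm - an = (n-1)(kn+1) - ((n-1)k+1)n = (n-1) - n = -1 < 0$, which would contradict the claim. So I need to check that this exceptional point is excluded by the hypotheses; and indeed it is, since there we have $a + b = m' + (n-1) = m' + n' $, but also $m' = kn'+1$ so $a+b = kn' + 1 + n' = (k+1)n' + 1$, and on the other hand $m' + m' = 2m' = 2kn' + 2$; the side condition $a + b < m' + m'$ becomes $(k+1)n' + 1 < 2kn' + 2$, i.e. $1 < (k-1)n' + 1$, which holds unless $k = 1$ — and when $k=1$ the exceptional point has $a = n' + 1 = n$, $b = n-1$, so $a + b = 2n - 1$ while $m' + m' = 2(n'+1) = 2n$, so $a+b < m'+m'$ is still satisfied.

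This last wrinkle means the naive argument does not quite close, so the real plan is to look more carefully at the side condition and realize it must be strengthened in the intended range, or else to observe that the genuine combinatorial use only ever applies \eqref{e-1.5} when $a \le m'$ \emph{strictly} or at lattice points arising as corners of a path in $\CD_{m',n'}$, where $b m' - a n' \ge 0$ is already known and the exceptional point is not a corner. Concretely, the clean path is: split into the cases $a \le bk$ (immediate from $bk - a \ge 0$) and $a \ge bk+1$; in the second case use $bm' - an' = n'(bk-a) + b \ge 0$ to force $b \ge n'(a - bk) \ge n'$, hence $b = n' = n-1$ and $a - bk \le 1$, hence $a = bk$ or $a = bk+1$; the sub-case $a = bk$ falls under the first case, and $a = bk + 1 = m'$ together with $a + b < m' + m' = m' + (kn'+1)$ gives $n-1 = b < kn' + 1$, which is automatic, so this point is genuinely allowed — meaning the lemma as literally stated needs the hypothesis $a \le m'$ replaced by $a < m'$ or $b < n'$, or an additional argument.

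The main obstacle, therefore, is not any hard estimate but pinning down exactly which boundary lattice points are permitted: the algebra $bm - an = (bm' - an') + (bk - a)$ handles everything except the single corner $(a,b) = (m', n-1)$, and the crux of the proof is to verify that this point is either excluded by the precise hypotheses in force when the lemma is applied (it is never a lower corner of a path in $\CD_{m',n'}$, since there the strict inequality of Remark~\ref{rem-1.1} applies for $i < m'+n'$), or to note that the lemma should be read with the strengthened hypothesis $a + b < m' + n'$ rather than $a + b < m' + m'$. I would write the final proof as the two-case split above, flagging the boundary point explicitly and dispatching it via the application context, so the reader sees that \eqref{e-1.5} is really just the identity $bm - an = (bm' - an') + (bk - a)$ plus a one-point boundary check.
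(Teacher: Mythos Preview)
Your approach is exactly the paper's: both proofs rest on the identity $bm-an=(bm'-an')+(bk-a)$ and then argue that the hypothesis forces $bk-a\ge 0$. The paper's formal proof phrases this as: from $bm'-an'>0$ one gets $b/n'>a-bk$, hence $a-bk<1$, hence $bm-an>-1$, and integrality plus coprimality finish it. The paper also offers a geometric gloss (the lattice diagonal of $L_{m',n'}$ lies on that of $L_{m,n}$), but the algebraic content is identical to yours.

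You have in fact been more careful than the paper. You correctly isolated the single boundary point $(a,b)=(m',n')$, where $bm'-an'=0$ but $bm-an=-1$, and observed that the printed side condition $a+b<m'+m'$ does \emph{not} exclude it. This is a genuine typo in the statement: the intended bound is $a+b<m'+n'$, which rules out exactly that corner (and is the natural bound, since one is comparing ranks along a path of length $m'+n'$). The paper's formal proof silently works only with the strict hypothesis $bm'-an'>0$ and then appeals to ``coprimality of $(m,n)$ and $(m',n')$'' to sweep up the equality case; your analysis makes explicit what that appeal is really doing. With the corrected hypothesis $a+b<m'+n'$, your two-case split is a complete proof, and you can drop the hedging about ``application context.''
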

Before giving a formal proof, we explain a simple geometric reason.
\begin{proof}[Geometric Reasoning]
Let us call $R_{m,n}$ the $m\times n$ lattice rectangle.
 Since for $a>0$ we have the equivalence:
$$
bm'-an' > 0\qquad \LLRA \qquad \frac{b}{a}> \frac{n'}{ m'}.
$$
Then \eqref{e-1.5}  says that if the point $(a,b)\in L_{m',n'}$ is weakly  above the lattice diagonal of
$L_{m',n'}$ then the same point $(a,b)$ is weakly above the lattice  diagonal of $L_{m,n}$.
\begin{figure}
  [!ht]
$$ \vcenter{\includegraphics[width= 2.4  in]{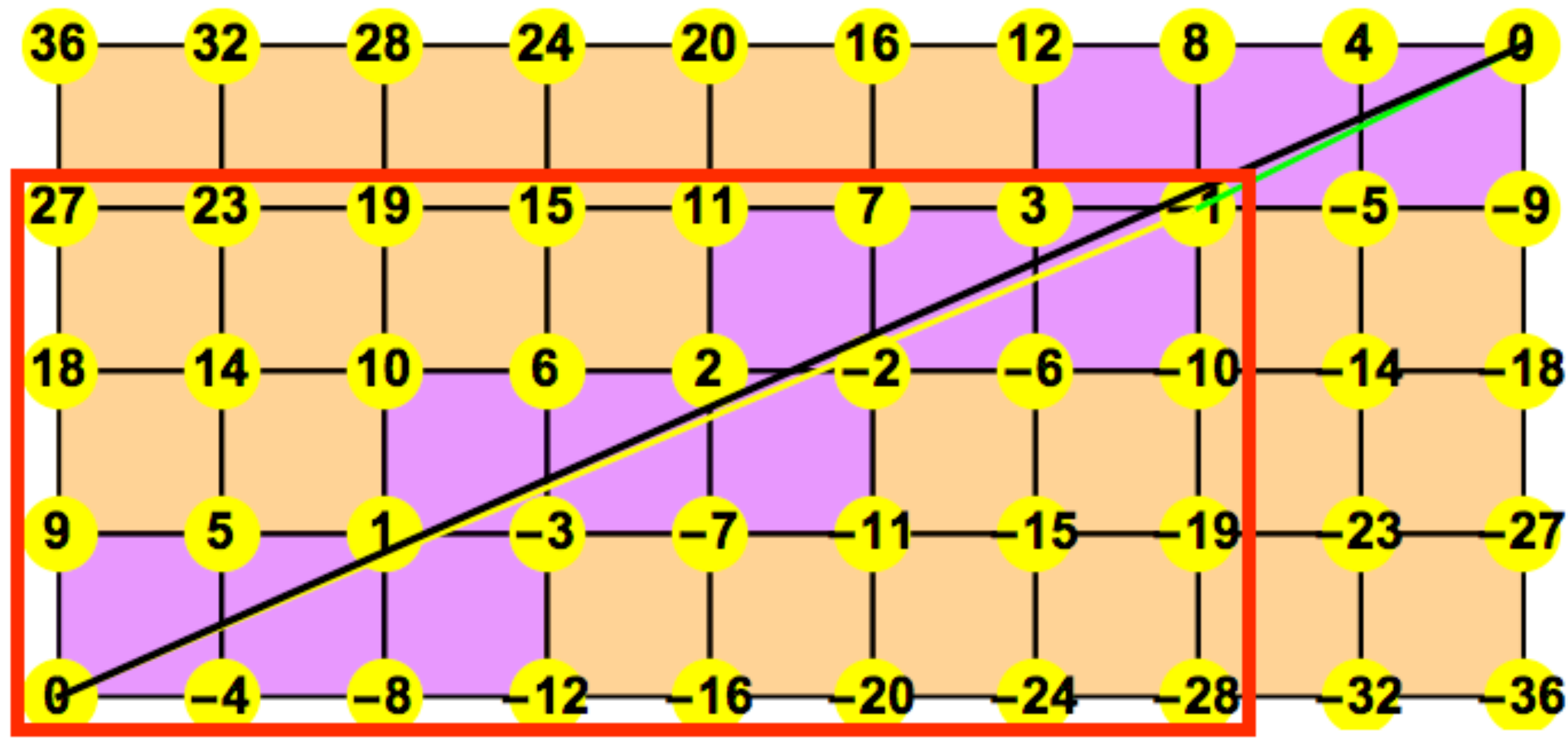}}
$$
 \caption{The lattice $L_{m,n}$ and $L_{m',n'}$.\label{fig:Lmn}}
\end{figure}

This is geometrically evident since the lattice diagonal of
$L_{m',n'}$ coincides with the portion of the lattice diagonal of   $L_{m,n}$ that is
in  $L_{m',n'}$. Figure \ref{fig:Lmn} illustrates the lattice for $k=2$ and $n=4$. For instance, the point $(a,b)$ with weight $14$ is above the diagonals for both $L_{m,n}$ and $L_{m',n'}$.
\end{proof}
\begin{proof}[Formal Proof]

We may write
$$
bm'-an' = b\big(k(n-1)+1\big)-a(n-1)=
  bm -an+a-bk.
$$
On the other hand we have
$$
bm'-an'>0 \qquad \longleftrightarrow
\qquad  bkn'+b> an'
\qquad \longleftrightarrow
\qquad   \frac{b}{  n'}> a-  bk.
$$
This gives $a-bk<1$. Thus
$$
0<bm-an+1
$$
and the coprimality of   $(m,n)$ and $(m',n')$ gives \eqref{e-1.5}.
\end{proof}

Before we can proceed further with our arguments we need to establish
the characteristic property of our standard tableaux.
This property may be stated as follows.

\begin{lem}
  \label{l-1.4}
Let $T$ be an  $n\times (k+1)$ array filled with labels $1,2,\ldots ,(k+1)n$ such that each row is increasing from left to right and each column is increasing from top to down. Then $T=T(D)$
for a Dyck path  $D\in \CD_{kn+1,n}$  if and only if
for any $a< b<c<d$, with $d$ immediately below $a$, the labels $b$ and $c$ are never
in the same column.
\end{lem}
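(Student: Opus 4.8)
The plan is to prove both directions by relating the tableau $T(D)$ to the $S$-positions of $\SW(D)$ via Corollary~\ref{c-S-positions}, together with a careful analysis of what the Filling Algorithm does. First I would set up notation: if $T$ is a filled $n\times(k+1)$ array with increasing rows and columns, then the top row consists of $n$ entries $T_{1,1}<T_{1,2}<\dots<T_{1,n}$, and these are exactly the positions where the Filling Algorithm placed an entry at the top of a fresh column, i.e.\ the positions of the $S$'s. So a first observation is: a standard-shaped filling $T$ equals $T(D)$ for some $D\in\CD_{kn+1,n}$ iff its first row $(t_1,\dots,t_n)$ satisfies $t_j\le 1+(j-1)(k+1)$ for all $j\le n$ \emph{and} the rest of $T$ is forced from this first row by the Filling Algorithm (place each non-top-row entry below the smallest active column). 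The real content of the lemma is therefore: \emph{the "horizontal strip" condition on $(a,b,c,d)$ is equivalent to $T$ being reconstructible from its first row by the greedy "smallest active column" rule, given that the first row already satisfies the Corollary bound.}

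For the forward direction ($T=T(D)\Rightarrow$ the strip condition), I would argue directly from the algorithm. Suppose $d$ is placed immediately below $a$ in column $j$, so at the moment $d$ is read (letter $W$), column $j$ was the \emph{smallest active column}, meaning its current bottom entry is $a$ and $a$ is the smallest among all active bottom entries. Now take any $b$ with $a<b<d$: when $b$ was read, column $j$ was still active with bottom entry $a$, hence if $b$ were a $W$ it would have gone below $a$ (or below something $\le a$), forcing $b<d$ to be placed before $d$ in a way incompatible with $d$ being directly below $a$ — so every such $b$ with $a<b<d$ is either an $S$ (top of a new column) or lands in a column whose bottom is $<a$. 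In particular, if $b<c$ both lie in the interval $(a,d)$, they cannot end up stacked in the same column: the first of them to be placed in that column would become its new bottom entry with value $>a$, contradicting that at time $d$ the bottom of that column is still $\le a$ (columns only grow downward, bottoms only increase). This is the step I expect to require the most care — tracking how "active bottom entries" evolve and ruling out the stacking — but it is a finite bookkeeping argument about the monotone dynamics of the algorithm.

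For the converse (strip condition $\Rightarrow T=T(D)$), I would show the strip condition forces $T$ to be exactly the greedy filling of its own first row, and that the first row automatically satisfies the Corollary bound. Given $T$ with increasing rows/columns and the strip property, run the Filling Algorithm on the $\SW$ word determined by declaring positions $t_1<\dots<t_n$ (the first row of $T$) to be $S$ and all others $W$; call the output $T'$. One checks by induction on the entry $i$ being placed that $T'=T$: when a $W$-entry $i$ is read, the smallest active column of $T'$ has some bottom entry $a<i$, and I claim $i$ sits directly below $a$ in $T$ as well — if not, in $T$ the entry below $a$ is some $d$ with $a<i<d$ (since columns increase and $i$ must go somewhere below $a$ eventually by the strip/greedy matching), and then the actual parent of $i$ in $T$ is some $a'$ in another column with $a<a'<i$; but $a'$ and $i$'s would-be siblings then violate the strip condition for the pair $(a,\dots,d)$, since $a',i$ with $a<a'<i<d$ would be forced into the same column. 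Finally, the bound $t_j\le 1+(j-1)(k+1)$: between consecutive top-row entries $t_{j-1}$ and $t_j$ there are $t_j-t_{j-1}-1$ letters $W$, and each must fit below one of the $j-1$ currently-open columns without overflowing row $k+1$; a counting argument (each open column can absorb at most $k$ $W$'s below its top) gives the inequality, and then Corollary~\ref{c-S-positions} yields $T=T(D)$ for the corresponding $D\in\CD_{kn+1,n}$. The main obstacle throughout is the forward-direction bookkeeping in the previous paragraph; everything else is a matter of organizing the induction cleanly.
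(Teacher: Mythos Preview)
Your proposal is correct and follows essentially the same route as the paper. Two simplifications are worth noting: the forward direction you flag as ``the main obstacle'' is in fact a one-liner---since $d$ is placed directly below $a$, the entry $a$ remains the smallest active entry throughout the interval $(a,d)$, so when $c$ arrives it cannot be placed below any $b>a$, and hence $b,c$ cannot share a column; and for the converse, the top-row bound $t_j\le 1+(j-1)(k+1)$ follows from row/column increasing alone (every label $<t_j$ must sit in the first $j-1$ columns, which hold only $(j-1)(k+1)$ cells), so the paper establishes it \emph{before} invoking the Filling Algorithm, cleanly avoiding the circularity your counting argument flirts with.
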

\begin{proof}
The ``only if'' part is immediate. The fact that $d$ was placed below $a$ implies that $a$
became active as soon as it was placed and remained so until the arrival of $d$.
When $c$ arrives it cannot be inserted below $b>a$,
because that would violate  the   rule that  every new insertion is placed under the smallest active entry. Thus $b$ and $c$ cannot be in the same column.

The ``if''  part of the proof is more elaborate. We are given an $n\times (k+1)$ tableau $T$ with the stated increasing properties and the $a<b<c<d$  condition and we want to show that $T$ is obtained from the SW sequence of a  path  $D\in \CD_{kn+1,n}$ by the filling algorithm. Since the SW sequence must have letters $S$ in positions $t_1,t_2,\ldots ,t_n$, which are exactly the first row entries of $T$.
By Corollary \ref{c-S-positions}, we need to show the following.
\begin{align}
 t_{j+1}\le (k+1)j+1 \qquad (\hbox{fot all  $1\le j\le n-1$}).
\label{e-1.6}
\end{align}

To show this, notice that  in  the first $j$ columns of $T$  we have space for only
 $(k+1)j$ entries. This given, if for some $j$  we had  $t_{j+1}> (k+1)j+1$ then
 the row and column increasing condition could not allow enough space for all the entries
$a<t_{j+1}$, leading to a contradiction. Thus all the inequalities in \eqref{e-1.6} must be satisfied and the resulting path must be  a good one.

It remains to show that the entries lie in $T$ as if they were placed by our filling algorithm.
Suppose, for the sake of argument, that the entries of $T$ are placed, as we find them,  one by one in increasing order. Then again, the row and column increasing condition forces each entry
to be  placed directly under some active entry. Suppose, if possible, an entry $c$ is placed under an  active entry $b$ larger than the smallest active entry $a$, at  that  moment.
But then the entry $d$ that  is finally put under $a$ must be greater than $c$. We thus end up with
$a<b<c<d$ with $b$ and $c$ in the same column violating the $a<b<c<d$ condition.
This contradiction completes our proof of Lemma \ref{l-1.4}.
\end{proof}

As a direct consequence of Lemma \ref{l-1.4}, the Filling Algorithm is a bijection.
Lemmas \ref{l-1.3}  and \ref{l-1.4}  are needed in the  proof, by induction, that the walk consists of a single cycle. This is best illustrated by working out an example.  Here we have a
$k=3$ and $n=4$ Fuss path $D$ and its corresponding tableau $T(D)$

$$
\hskip .5in D=\hskip -2.2in\hskip 1.8in \vcenter{\includegraphics[height=1  in]{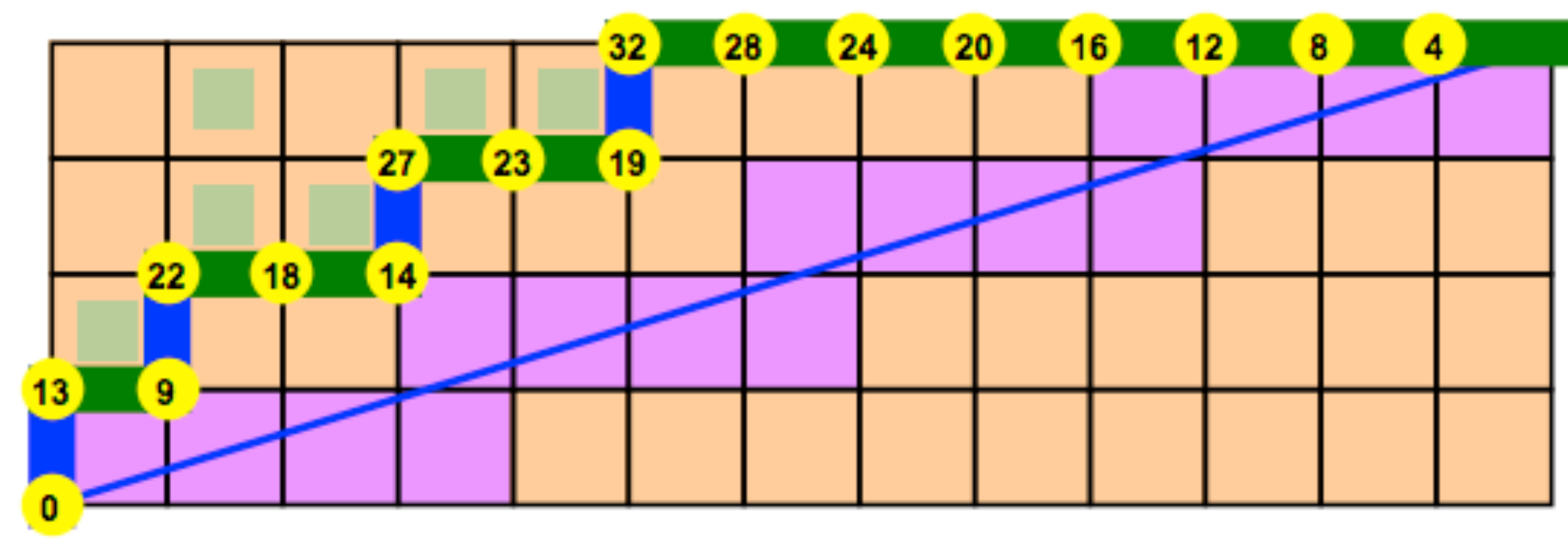}}
\hskip -3in \Longrightarrow \qquad T(D)=\begin{pmatrix}
1 & 3 & 6 &9 \cr
2 & 5 & 10 &12 \cr
4 & 8 & 13 & 14 \cr
7 & 11 & 15 &16 \cr
\end{pmatrix}.
$$
The characteristic property of these tableaux guarantees that if we remove the first column and reduce the remaining entries to a contiguous sequence,
starting with $1$,  we get the
tableau $T(D')$ of a  Fuss path $D'$. In our case $T(D)$ reduces to the
 tableau $T(D')$ displayed below
$$
T(D)=\begin{pmatrix}
1 & 3 & 6 &9 \cr
2 & 5 & 10 &12 \cr
4 & 8 & 13 & 14 \cr
7 & 11 & 15 &16 \cr
\end{pmatrix}
\qquad
\Longrightarrow
\qquad
T(D')=\begin{pmatrix}
 1 & 3 &5 \cr
2 &  6 & 8 \cr
 4 & 9 & 10 \cr
7 & 11 &12 \cr
\end{pmatrix}.
$$
The $SW$ word corresponding to $D'$ can now be constructed by placing the $S$'s in the positions indicated by the first row of $T(D')$ and $W$'s in all the remaining $13$ positions. This gives
$$
\SW(D')=SWSWSWWW WWW WWW WWW  WW
$$
and thus

$$
\hskip 1in\hskip .5in D'=\hskip -.4in\vcenter{\includegraphics[height=1.2  in]{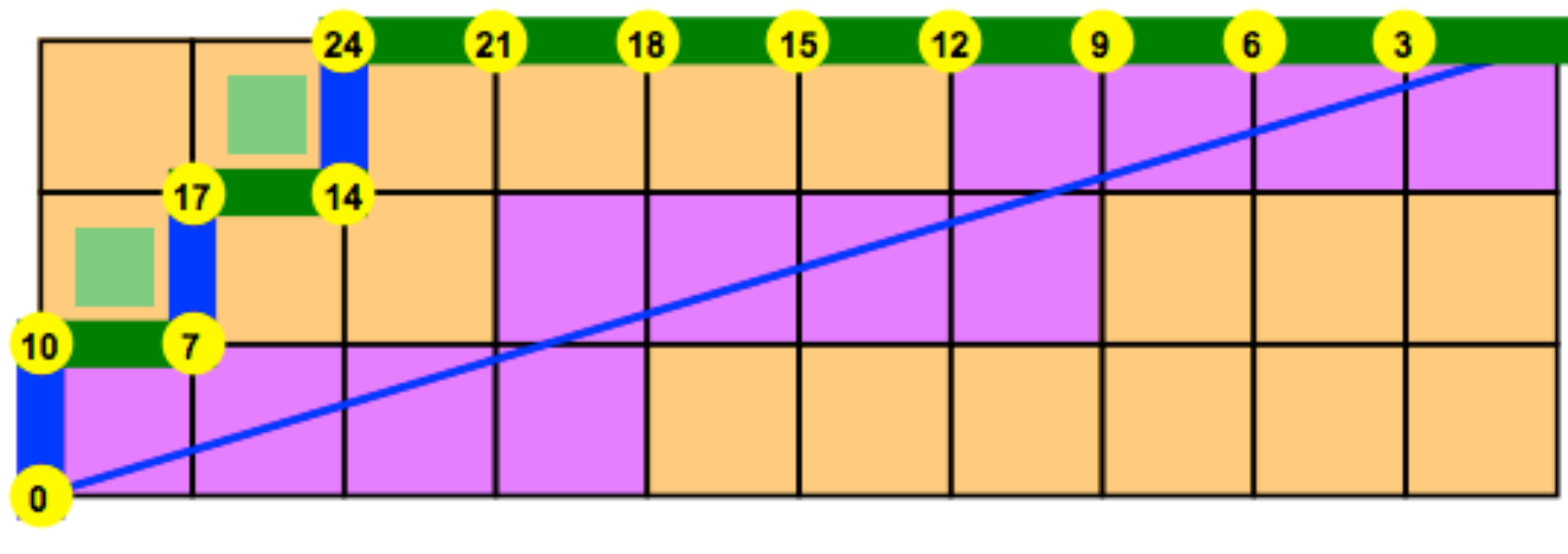}}.
$$
Applying the algorithm of Theorem \ref{t-I.2} to the tableau $T(D)$ gives the walk
\begin{align}
  w(T)=
  1 \RA 8 \RA5 \RA3 \RA12 \RA9 \RA17 \RA15 \RA13 \RA10 \RA6 \RA16 \RA14 \RA11 \RA7 \RA4 \RA 2  \RA 1.
\label{e-1.17}
\end{align}

Applying the algorithm of Theorem \ref{t-I.2} not to the tableau $T(D')$ but rather to the tableau obtained from $T(D)$ by removing the first column and treating the entries as  if they were contiguous. This  yields the tableau $T^*$
and the corresponding walk $w(T^*)$ displayed  below
\begin{align}
  \label{e-1.18}
\vcenter{{\includegraphics[width=.8  in]{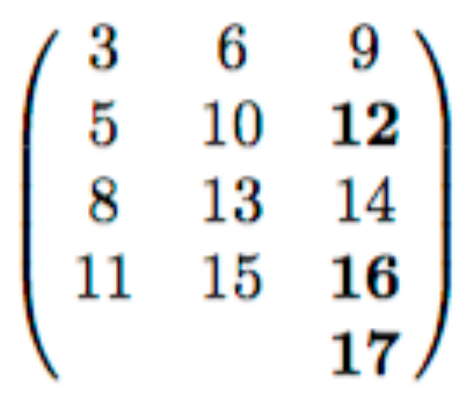}}}
\hskip -5in \Longrightarrow\quad  w(T^*)=
 3 \RA 12 \RA 9 \RA 17  \RA15 \RA 13 \RA10 \RA 6 \RA16 \RA14 \RA 11 \RA 8
 \RA 5 \RA 3.
\end{align}
 The inductive hypothesis assures that  $\sigma(T^*)$  consists of a single cycle,
as we
 clearly  see in Figure \ref{fig:single-double}. However a comparison of the
walks in \eqref{e-1.18} and \eqref{e-1.17} reveals  that the  cycle  given by \eqref{e-1.18} has an extension
that  reveals the cyclic nature of the walk in \eqref{e-1.17}. \begin{figure}[!ht]
$$
 {\qquad\ \ \ \includegraphics[width=1.3  in]{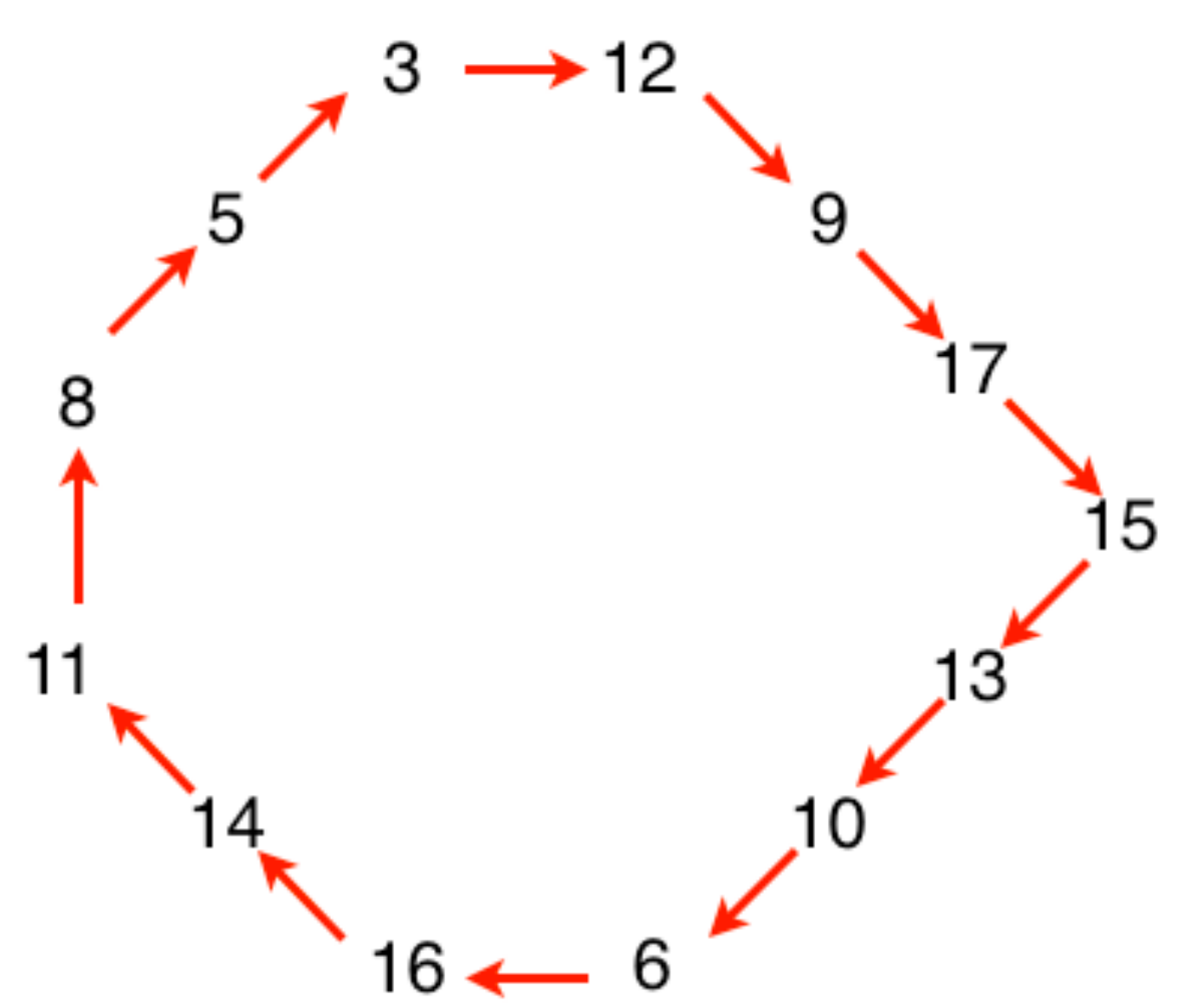}
 \atop \includegraphics[width=1.7  in]{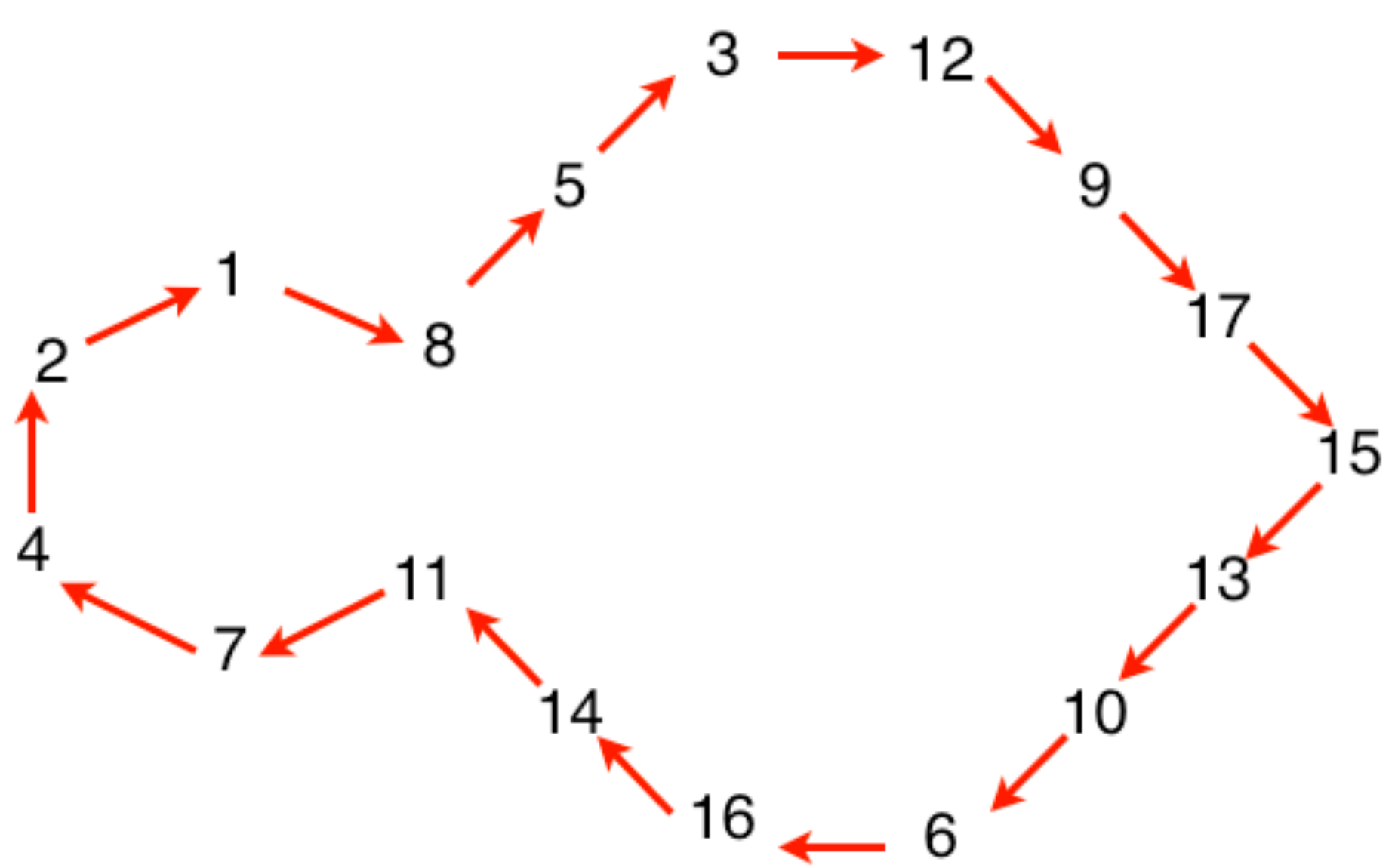}}
$$
\caption{Cycles for $T$ and $T*$.\label{fig:single-double}}
\end{figure}

In fact, starting from the entry $8$ and ending at the entry $11$ of the walk in \eqref{e-1.17}
we get exactly the entries of the cycle $\sigma(T^*)$. This should be so since the algorithm, when applied to the tableau  $T^*$, is not affected by the first column of  $T(D)$.
But when we act on $T(D)$ the edge $11\RA 8$ is replaced by  the $k$-tuple  of edges $11\RA 7\RA 4\RA 2 \RA 1$ augmented by   the edge
$1\RA 8$, resulting in the larger cycle as illustrated in Figure \ref{fig:single-double}.
This completes the inductive proof that $\sigma(D)$ consists of a single cycle.

\section{ More  technical proofs \label{s-proof}}
This section is devoted to give a formal proof of Theorem \ref{t-I.2}.
It is convenient to make the following convention in this section.
We fix a positive integer $k$ and let $m=kn+1$, $n'=n-1$ and $m'=kn'+1$, just as in Lemma \ref{l-1.3}. Let $D\in \CD_{m,n}$ be a Dyck path. Then $D'$ will be in $\CD_{m',n'}$. Let $T=T(D)\in \TAU_{n}^k $ be the tableau
 constructed from $D$ by the filling algorithm
 and $w(T)$ be the closed walk  in the entries of $T$ yielded by  the algorithm of Theorem \ref{t-I.2}.
 Denote by $T^*$ the tableau obtained by removing the first column from $T$. We have seen that applying to $T^*$ the algorithm of Theorem \ref{t-I.2},
as if its letters are contiguous, we obtain a closed walk $w(T^*)$ on the entries of $T^*$. Finally,  it will be convenient to denote by $c_1,c_2,\ldots ,c_{k+1}$  the entries of the first column of $T$.

The closed walk $w(T)$ and $w(T^*)$ are closely related.
\begin{lem}
  \label{l-2.1}
Let $D\in \CD_{kn+1,n}$ be a Dyck path, $T=T(D)$ be its tableau, and
let $T^*$ be obtained from $T$ by removing the first column.
Then the following properties hold true.
\begin{enumerate}
  \item[(1)] As a closed walk $w(T)$ contains the column 1 segment $c_{k+1}\RA c_k\RA \cdots \RA c_1$.

 \item[(2)]  Omitting the column 1 segment from $w(T)$ gives $w(T^*)$. More precisely, if $w(T)$ contains the segment $c'\RA c_{k+1}\RA c_k\RA \cdots \RA c_1\RA c_{k+1}+1$, then replacing this segment by $c'\RA c_{k+1}+1$ gives $w(T^*)$.

\item[(3)] $\l(w(T)=\l(w(T^*))+k+1$.

\item[(4)] $w(T)$ is a closed walk of length $m+n$.

\end{enumerate}
\end{lem}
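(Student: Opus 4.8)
The plan is to prove (1)--(3) directly from the walk rules of Theorem~\ref{t-I.2}, anchored on one structural fact about the first column, and then to obtain (4) by induction on the number $n$ of columns. The structural fact is that \emph{none of $c_1,\dots,c_{k+1}$ is bold}, i.e.\ none of $c_i-1$ lies in the bottom row~$k+1$ of $T$. This is clear for $c_1=1$. For $i\ge 2$ the Filling Algorithm inserts $c_i$ at cell $(i,1)$ directly below $c_{i-1}$, so $c_{i-1}$ is the smallest active entry at that step; moreover $c_{i-1}$ is still active one step earlier, when $c_i-1$ is inserted. Were $c_i-1$ in row $k+1$ — necessarily at some $(k+1,j)$ with $j\ge 2$ — it would have been placed below $T_{k,j}$, forcing $T_{k,j}$ to be the smallest active entry then; but monotonicity of rows and columns gives $T_{k,j}>T_{k,1}=c_k\ge c_{i-1}$, contradicting the activeness of $c_{i-1}$. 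The same reasoning covers $i=k+1$, so every $c_i$ is normal.

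For part~(1): by Lemma~\ref{l-1.2} every vertex has a unique in-edge, so it suffices to trace the in-edges of the $c_i$. Since $c_1=1$ is in row~$1$ and is normal, reading step~4 backwards its in-edge comes from the entry directly below it, namely $c_2$; iterating (normality of $c_1,\dots,c_k$ rules out any step-$5$ reduction in between, by the case analysis in the proof of Lemma~\ref{l-1.2}), the in-edge of $c_i$ is $c_{i+1}$ for $1\le i\le k$. Hence $w(T)$ contains $c_{k+1}\RA c_k\RA\cdots\RA c_1$, and since $c_1$ is in row~$1$, step~$3$ sends it to $(\text{bottom of column }1)+1=c_{k+1}+1$, giving the segment $c_{k+1}\RA\cdots\RA c_1\RA c_{k+1}+1$. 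For part~(2), write $c'$ for the unique predecessor of $c_{k+1}$, so that $w(T)$ contains $c'\RA c_{k+1}\RA\cdots\RA c_1\RA c_{k+1}+1$; here $c'$ lies in a column $\ge 2$, since a successor either climbs its own column or, through a step-$5$ reduction, can only land on the one column-$1$ cell lying in row $k+1$, namely $c_{k+1}$. Deleting column~$1$ and renormalizing the labels yields, invoking Lemmas~\ref{l-1.3} and~\ref{l-1.4} together with Corollary~\ref{c-S-positions}, a genuine tableau $T^*\in\TAU_{n-1}^{k}$, so that $w(T^*)$ is defined. Comparing the two sets of rules: whether a cell is in row~$1$ is unaffected, and the bold entries of $T$ are exactly those of $T^*$ together with the single extra entry $c_{k+1}+1$ (because the bottom row of $T$ is that of $T^*$ with $c_{k+1}$ adjoined). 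Thus the two walks agree move for move except where, in $T$, the walk first arrives at $c_{k+1}+1$: there the $T$-walk finds it bold and reduces $c_{k+1}+1\RA c_{k+1}$, entering the detour $c_{k+1}\RA\cdots\RA c_1\RA c_{k+1}+1$, whereas the $T^*$-walk finds $c_{k+1}+1$ normal and writes it at once. Hence $w(T)$ is $w(T^*)$ with this detour spliced in between $c'$ and $c_{k+1}+1$; equivalently, replacing $c'\RA c_{k+1}\RA\cdots\RA c_1\RA c_{k+1}+1$ by $c'\RA c_{k+1}+1$ recovers $w(T^*)$.

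Part~(3) is then immediate: by Lemma~\ref{l-1.2} the closed walk $w(T)$ is a simple cycle, so the spliced detour adds exactly the $k+1$ distinct vertices $c_1,\dots,c_{k+1}$ and nothing else, whence $\ell(w(T))=\ell(w(T^*))+k+1$. Part~(4) follows by induction on $n$: when $n=1$, $T$ is the single column $1<2<\cdots<k+1$ and one checks directly that $w(T)=(1,k+2,k+1,\dots,2)$ is a cycle of length $k+2=m+n$; for $n\ge 2$, $T^*$ has $n-1$ columns, so $\ell(w(T^*))=m'+n'=(k+1)(n-1)+1$ by the inductive hypothesis, and (3) gives $\ell(w(T))=(k+1)(n-1)+1+(k+1)=(k+1)n+1=m+n$.

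The main obstacle is part~(2): one must verify that the \emph{only} way the walks on $T$ and on $T^*$ can diverge is through the extra bold entry $c_{k+1}+1$. This reduces to showing that no step-$5$ reduction on $T$ can step onto a column-$1$ entry except via the move $c_{k+1}+1\RA c_{k+1}$, which uses both the normality of all the $c_i$ and the fact that $c_{k+1}$ is the unique column-$1$ entry in the bottom row; the attendant bookkeeping of bold entries and of the label renormalization is the fiddly part, while everything else is routine.
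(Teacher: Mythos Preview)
Your proof is correct and follows essentially the same route as the paper's: both argue (1) by tracing the in-edges of the $c_i$ via Lemma~\ref{l-1.2}, argue (2) by comparing the edge sets of $w(T)$ and $w(T^*)$ and isolating $c_{k+1}+1$ as the sole entry whose bold status differs, read (3) off from (2), and obtain (4) by induction on $n$ using (3).

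Where you differ is in the level of detail. Your explicit verification that \emph{none of $c_1,\dots,c_{k+1}$ is bold} is a genuine addition: the paper simply asserts ``$c_i$ has indegree $1$ from $c_{i+1}$,'' which tacitly relies on exactly this fact (a bold $c_i$ would receive its in-edge from a row-$1$ entry instead). Your argument via the Filling Algorithm is clean and fills that gap. Likewise, your explicit base case $n=1$ and your check that $c'$ lies outside column~$1$ are useful details the paper leaves implicit. Two minor remarks: your appeal to Lemma~\ref{l-1.3} and Corollary~\ref{c-S-positions} when showing the reduced tableau lies in $\TAU_{n-1}^k$ is superfluous---Lemma~\ref{l-1.4} alone suffices, since the $a<b<c<d$ condition is manifestly preserved under column deletion and relabeling; and note that the paper's $T^*$ retains the original labels (treated ``as if contiguous'') rather than being literally renormalized, though this is only a cosmetic difference for the walk structure.
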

\begin{proof}

\item {(1)} Since $w(T)$  is a closed walk containing $c_1=1$, it must return to $c_1$. Now $c_i$ has indegree 1 from $c_{i+1}$ for $i=1,2,\dots, k$. It follows that
$w(T)$ must contain the segment $c_{k+1}\to c_k\to \cdots \to c_1$.

\item {(2)} The directed edges of $w(T)$ and $w(T^*)$ are the same if both ends do not involve  column 1 entries. The   directed edges in $w(T)$ that involve column $1$ entries
are $c_{i+1}\to c_i$ for $1\le i\le k$, and $c_1 \to c_{k+1}+1$, together with $c'\to c_{k+1}$ for some entry $c'\in T^*$. We claim that the only  directed edge  in $w(T^*)$ that involves an entry of column $1$ is
 $c' \to c_{k+1}+1$. This is because in $T$ we will go from $c'$ to $c_{k+1}+1$, a bold faced letter, and then to $c_{k+1}$. While in $T^*$, $c_{k+1}+1$ is not bold faced, so in $w(T^*)$  we have the directed edge $c'\to c_{k+1}+1$. This is equivalent to replacing the segment $c'\to c_{k+1} \to c_k\to \cdots c_1 \to c_{k+1}+1$ in $w(T)$ by $c'\to c_{k+1}+1$ to obtain $w(T^*)$.

\item {(3)} This is a direct consequence of   (2).

\item {(4)} Follows from the inductive argument  of last section  and part (3).
\end{proof}

Now we are ready to prove Theorem \ref{t-I.3}, which is restated as follows.
\begin{theo}
  \label{t-2.1}
   On the side of each edge $i\RA j$ of $w(T)$ let us  place an $S$ if $i$ is  in the first row of $T$ and a $W$ otherwise. See Figure \ref{fig:thm-walk-rank} for an illustration.
This done, the  $SW$ sequence of the path  $\Phi^{-1}(D)$ is simply obtained by
 reading all these edge labels starting from $i=1$ and following the directed edges of $w(T)$.
\end{theo}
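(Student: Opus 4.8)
The plan is to reduce the assertion, exactly as set up in Section~\ref{s-basic}, to a single monotonicity statement about ranks, and then prove that statement by induction on $n$ with $k$ fixed, peeling off the first column of $T$ with Lemma~\ref{l-2.1} and converting between the two lattices with Lemma~\ref{l-1.3}. Reading the edge labels of $w(T)$ from the vertex $1$ produces, by construction, the word $\SW(\oD)$ of the path $\oD$ whose $j$-th step is North precisely when the $j$-th vertex visited by $w(T)$ lies in the first row of $T$; so the theorem says $\oD=\Phi^{-1}(D)$. Assign each entry $i\in\{1,\dots,m+n\}$ a rank $\mathrm{rk}(i)$ by putting $\mathrm{rk}(1)=0$ and walking along $w(T)$, adding $m$ across each $S$-edge and subtracting $n$ across each $W$-edge. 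Because $w(T)$ is a single closed walk of length $m+n$ through all of $1,\dots,m+n$ (Lemma~\ref{l-2.1}(4) together with Lemma~\ref{l-1.2}), this is well defined, and $\mathrm{rk}(i)$ is the rank of the lattice point at the foot of the $i$-th step of $\oD$. As explained in Section~\ref{s-basic}, the theorem follows once one shows $\mathrm{rk}(i)<\mathrm{rk}(i+1)$ for $1\le i\le m+n-1$: this makes every rank $\ge 0$, so $\oD\in\CD_{m,n}$, and it makes the ranks, read in increasing order, spell $\SW(D)$, i.e.\ $\Phi(\oD)=D$.

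I would prove $\mathrm{rk}(i)<\mathrm{rk}(i+1)$ by induction on $n$. For $n=1$ (so $m=k+1$ and $T$ is the single column $1<2<\dots<k+1$) the walk is $1\to k+2\to k+1\to\dots\to 2\to1$ and $\mathrm{rk}(i)=i-1$ by direct computation. For the step, let $T^\ast$ be $T$ with its first column $c_1=1<c_2<\dots<c_{k+1}$ removed; by Lemma~\ref{l-1.4} the order-preserving relabelling of $T^\ast$ is $T(D')$ for a Fuss path $D'\in\CD_{m',n'}$ with $n'=n-1$, $m'=kn'+1$, so by induction the analogous walk-rank $\mathrm{rk}^\ast$ of $w(T^\ast)$ (formed with $m',n'$) is strictly increasing, i.e.\ $\oD':=\Phi^{-1}(D')\in\CD_{m',n'}$ and $\Phi(\oD')=D'$. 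By Lemma~\ref{l-2.1}, $w(T)$ is $w(T^\ast)$ with the edge $c'\to c_{k+1}+1$ replaced by $c'\to c_{k+1}\to\cdots\to c_1\to c_{k+1}+1$; telescoping the rank increments around this detour (using only $m-kn=1$) gives, independently of the inductive hypothesis, $\mathrm{rk}(c_i)=(i-1)n$ for $1\le i\le k+1$, and shows $c'$ is not in row $1$ of $T$ (otherwise the same telescoping would force $\mathrm{rk}(c')=-1$, against $\mathrm{rk}^\ast\ge 0$), so the detour inserts $k+1$ consecutive $W$-edges. Hence $\SW(\oD)$ is an initial $S$, then $\SW(\oD')$ rotated cyclically to begin just past the $W$-letter matching $c'\to c_{k+1}+1$, then $W^k$; geometrically $\oD$ $=$ one North step, a cyclic rotation of $\oD'$ raised one unit, and $k$ East steps. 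Tracking a vertex $P_j$ of $\oD'$ (the point reached after $j$ steps) through this picture gives, for a $T^\ast$-entry $e$ occupying position $q$ of $w(T^\ast)$ (with $c'$ in position $p$), the identity $\mathrm{rk}(e)=m+\rho_{m,n}(P_{q-1})-\rho_{m,n}(P_p)-\varepsilon_q$, where $\rho_{a,b}(x,y)=by-ax$ and $\varepsilon_q\in\{0,1\}$ equals $1$ iff $q\le p$, while $\mathrm{rk}^\ast(e)=\rho_{m',n'}(P_{q-1})$ is increasing in $e$ by hypothesis.

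It then remains to deduce that $\mathrm{rk}$ is increasing on all of $\{1,\dots,m+n\}$. On the first-column entries it is $0<n<\dots<kn$. On the $T^\ast$-entries one must convert the order induced by $\rho_{m',n'}$ on the points $P_{q-1}$—which, by induction, is the natural order of the entries—into the order induced by $\rho_{m,n}$ on their translates. The decisive arithmetic is the strict slope inequality $\tfrac m n=k+\tfrac1n<k+\tfrac1{n'}=\tfrac{m'}{n'}$: combined with Lemma~\ref{l-1.3} it yields, for any two vertices of $\oD'$, ``$\rho_{m',n'}$ smaller $\Rightarrow$ $\rho_{m,n}$ smaller'', and it also absorbs the $\varepsilon_q$ correction in the wrap-around case (the coordinate gaps involved there are between vertices of $\oD'$, hence bounded by $(m',n')$, so Lemma~\ref{l-1.3} applies). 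Finally one verifies that the constant ranks $(i-1)n$ interleave correctly: a $T^\ast$-entry $e$ with exactly $i$ first-column entries below it has $\mathrm{rk}(e)\in\big((i-1)n,\,in\big)$ when $e<c_{k+1}$ and $\mathrm{rk}(e)>kn$ when $e>c_{k+1}$, which follows from the displayed formula, from $\oD\in\CD_{m,n}$ (again Lemma~\ref{l-1.3}), and from the fact that the last $k+1$ steps of $\oD$ are the column-one detour. Putting the three ranges together gives $\mathrm{rk}(i)<\mathrm{rk}(i+1)$ throughout, closing the induction.

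I expect the rank comparison of the previous paragraph to be the real obstacle: $\mathrm{rk}$ is genuinely not an affine image of $\mathrm{rk}^\ast$, since the two walks use the increments $m,n$ versus $m',n'$ and different base points, so the argument must handle simultaneously an index shift between the foot and the head of a step of $\oD'$, the wrap-around of the cyclic rotation, and the passage between the two lattices; Lemma~\ref{l-1.3} performs the last only in the ``$\ge 0$'' direction, so order-preservation has to be wrung from the slope inequality, and the interleaving of the first-column ranks calls for a separate position count. The reduction, the base case, and the telescoping identity $\mathrm{rk}(c_i)=(i-1)n$ are all routine.
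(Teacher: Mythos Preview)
Your overall strategy coincides with the paper's: reduce to the monotonicity $\mathrm{rk}(i)<\mathrm{rk}(i+1)$, induct on $n$ with $k$ fixed, peel off column~1 via Lemma~\ref{l-2.1}, and pass between the $(m,n)$ and $(m',n')$ lattices via Lemma~\ref{l-1.3}. Your explicit formula $\mathrm{rk}(e_q)=m+\rho_{m,n}(P_{q-1})-\rho_{m,n}(P_p)-\varepsilon_q$ is correct and gives, for two consecutive entries both lying in $T^\ast$, essentially the paper's ``Claim'': apply Lemma~\ref{l-1.3} to the step counts $(\alpha,\beta)$ of the actual segment of $w(T)$ joining them (these satisfy $0\le\alpha\le m'$, $0\le\beta\le n'$ automatically, which is why the $\varepsilon$-correction is absorbed). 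So the heart of the $T^\ast$--$T^\ast$ comparison is fine, and the ranks $\mathrm{rk}(c_i)=(i-1)n$ are correct.

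The genuine gap is the interleaving step. Your assertion that $c_i<e<c_{i+1}$ forces $\mathrm{rk}(e)\in\big((i-1)n,\,in\big)$ is exactly what must be proved when one of $i,i+1$ is a column-1 entry $c_u$ with $u\le k$, and your justification (``from $\oD\in\CD_{m,n}$'') is circular: membership of $\oD$ in $\CD_{m,n}$ is the very conclusion the monotonicity is meant to deliver. Knowing only that $\mathrm{rk}$ is increasing on $T^\ast\cup\{c_{k+1}\}$ and that $\mathrm{rk}(c_i)=(i-1)n$ does \emph{not} force the two monotone chains to interleave correctly; a priori some $\mathrm{rk}(e)$ with $e<c_{k+1}$ could land in the wrong interval. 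The paper handles precisely this case (its Case~3) by a non-obvious path-transformation: if $i=c_u$ and $i+1=d_v$ lies in a column $d_1<\cdots<d_{k+1}$, the filling rule forces the alternation $c_u<d_v<c_{u+1}<d_{v+1}<\cdots<c_{k+1}<d_{v+k+1-u}$, and one replaces $\Pi_{i\dashrightarrow i+1}$ by $\Pi_{c_{k+1}\dashrightarrow d_{v+k+1-u}}$ (same total weight, endpoints now outside $\{c_1,\dots,c_k\}$) so that the Claim applies; the case $i+1=c_u$ is symmetric. Without this combinatorial input your induction does not close. A smaller point: your argument that $c'\notin$ row~1 (``else $\mathrm{rk}(c')=-1$ against $\mathrm{rk}^\ast\ge 0$'') mixes the two rank systems before their relation is established; it is cleaner to note directly that from any row-1 entry the algorithm jumps to $b+1$ with $b$ a bottom-row entry, hence to something $>c_{k+1}$.
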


\begin{figure}
  [!ht]
 $$ \includegraphics[width=3  in]{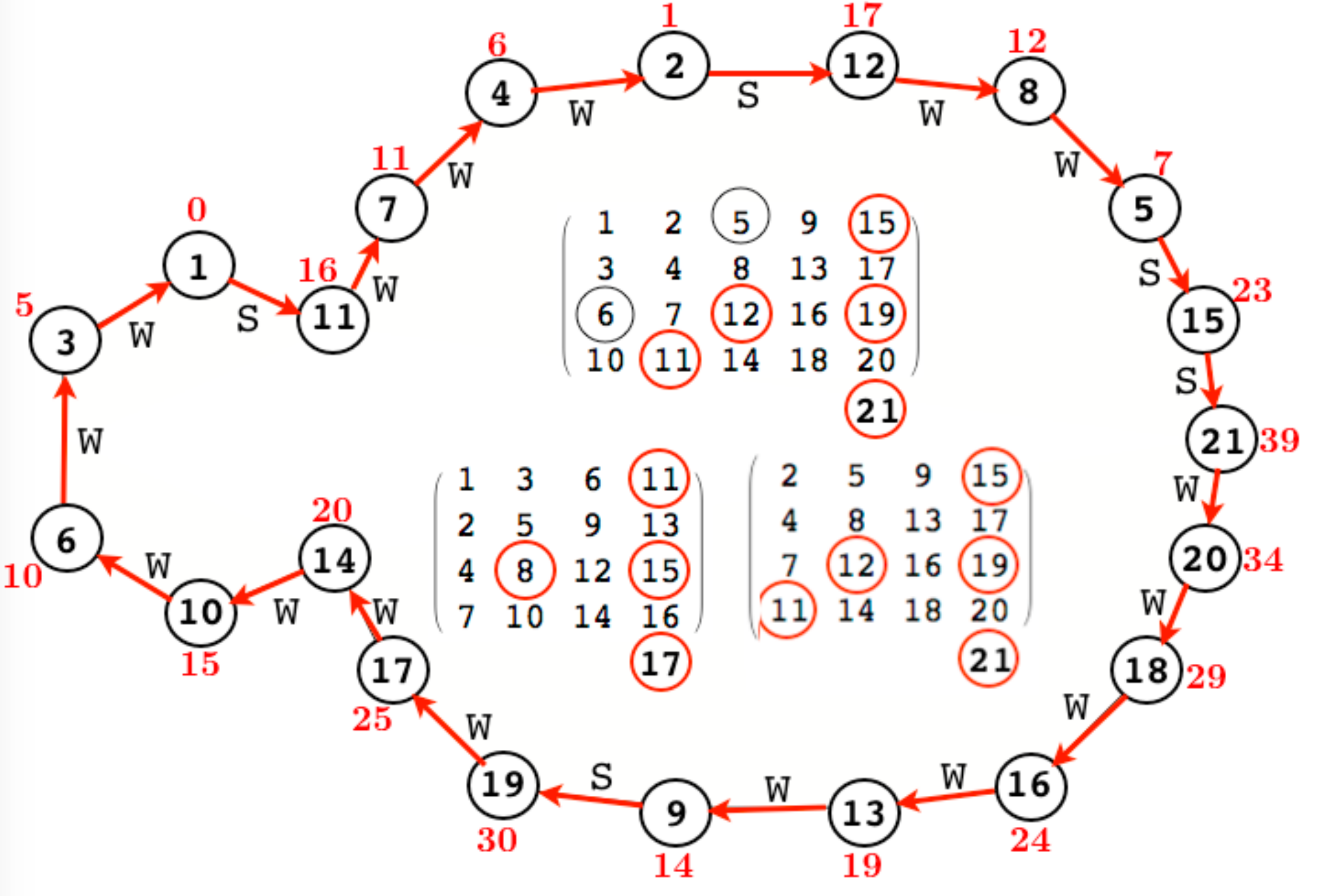}$$
\caption{The outside cycle is $w(T)$ where the tableau $T\in \TAU_{5}^3$ is put on the top. Below are two tableaux: the left one is $T'\in \TAU_4^3$ and the right one is $T^*$. \label{fig:thm-walk-rank}}
\end{figure}

\begin{proof}

Let $\oD$ be the path which results from this $SW$ sequence.
 That is, $\oD$ goes  North at each $S$ and East at each $W$. Since this  $SW$ sequence is a permutation of   $\SW(D)$,  the path $\oD$ will go from $(0,0)$
to $(m,n)$.
Let us compute the sequence of ranks starting by assigning $0$ to
$i=1$ then inductively (following $w(T)$) for each edge $i\RA j$ set
$\rank(j)=\rank(i)+m$ or $\rank(j)=\rank(i)-n$  according as the label of
$i\RA j$ is an $S$ or a $W$, (as illustrated in Figure \ref{fig:thm-walk-rank}).
To show that $\oD$ is a Dyck path we must prove that all these ranks are positive.  We will do this by showing that
\begin{align}
  \label{e-2.2}
  \rank(i+1)-\rank(i)>0  \qquad   \hbox{(for  all $0\le i\le m+n-1$)}.
\end{align}
In fact, this  not only yields that $\oD$ is a Dyck path but we will also     obtain that $\SW(D)$ is a rearrangement  of the steps of $\oD$ by increasing ranks of their starting entries, proving that $\oD=  \Phi^{-1}(D)$.

As we illustrated in Section \ref{s-basic},  we have the identity  $$\rank(j)-\rank(i)=b_{i,j} m-a_{i,j} n$$
 when the path $\Pi_{i\dashrightarrow j}$  from $i$ to $j$
 contains $a_{i,j}$ edge labels $W$  and $b_{i,j}$ edge labels $S$. Then to prove  \eqref{e-2.2} we need only  show that $b_im-a_in>0$ for all $0\le i\le m+n-1$, where $a_i$ and $b_i$ are short for $a_{i,i+1}$ and $b_{i,i+1}$.

We will prove the theorem by induction on $n$. The case $n=1$ is trivial, so we assume the theorem holds for $n'=n-1$. This implies that
if $i<j$ in $T^*$, then
 $$\rank^*(j)-\rank^*(i)=b^*_{i,j} m'-a^*_{i,j} n'>0$$
if $\Pi^*_{i\dashrightarrow j}$ is a path in $w(T^*)$ containing $a^*_{i,j}$ edge labels $W$  and $b^*_{i,j}$ edge labels $S$.

Claim: If $i<j$, and $i,j\ne c_u$ for $u\le k$, then $\rank(j)-\rank(i)>0$.

This is because in the assumed cases, the path $\Pi_{i\dashrightarrow j}$ in $w(T)$ is reduced to a path $\Pi^*_{i'\dashrightarrow j'}$ in $w(T^*)$ by omitting the column 1 segment, where $i'=i+\chi(i=c_{k+1})$ and $j'=j+\chi(j=c_{k+1})$. Clearly  $i'\le j'$.
Now the path
$\Pi_{i\dashrightarrow j}$  contains either all or none of the column 1 segment:
i) if $\Pi_{i\dashrightarrow j}$ does not contain column 1 segment, then the path reduces to $\Pi^*_{i'\dashrightarrow j'}$ with
$b^*_{i',j'}=b_{i,j}$ and $a^*_{i,j}=a_{i,j}$,
and by Lemma \ref{l-1.3} we have
$$\rank^*(j')-\rank^*(i')=b_{i,j}m'-a_{i,j}n'\ge 0 \Rightarrow \rank(j)-\rank(i)=b_{i,j}m-a_{i,j}n\ge 0,$$
where the equality can not hold since $(m,n)$ is a coprime pair;
ii) if $\Pi_{i\dashrightarrow j}$ contains the whole column 1 segment, then
the path reduces to $\Pi^*_{i\dashrightarrow j}$
with $b^*_{i,j}=b_{i,j}-1$ and $a^*_{i,j}=a_{i,j}-k$,
and by Lemma \ref{l-1.3} we have
$$\rank^*(j')-\rank^*(i')=(b_{i,j}-1)m'-(a_{i,j}-k)n'\ge 0 \Rightarrow \rank(j)-\rank(i)=b_{i,j}m-a_{i,j}n\ge 1.$$
This completes the proof of the claim.

Now we prove \eqref{e-2.2} by dealing with three cases (not necessarily mutually exclusive):

\item{Case 1:} $i, i+1 \ne c_u$ for $u\le k$. By the claim, $\rank(i+1)>\rank(i)$.

  \item{Case 2:}  Both $i$ and $i+1$ are in the first column.
  In this case
  we will have  $i=c_u$ and $i+1=c_{u+1} $ for some $1\le u\le k$.
  Since in $w(T)$ we have the
  directed edge $c_{u+1}\RA c_{u}$
  it follows that $\rank(i)-\rank(i+1)=-n$.

\item{Case 3:} If exactly one of $i$ and $i+1$ equals $c_u$ for $u\le k$, then we will transform the path from $i$ to $i+1$ to another path $j$ to $j'>j$ by adding and removing a same number of East steps, so that the Claim applies and we deduce that
$$\rank(i+1)-\rank(i)=\rank(j')-\rank(j)>0.$$
We divide into two subcases as follows.

\item{Case 3a:} if $i=c_u$ and $i+1$ is not in column $1$.
Assume $i+1$ is in another column with entries $d_1,\dots, d_{k+1}$. Since $T$ is row and column increasing
$d_u\ge i+1$, we may assume $i+1=d_v$ for some $v\le u$. Now after $c_u=i$ and $d_v=i+1$ are inserted into $T$, $c_u$ and $d_v$ are both active, the next entries inserted into the two columns must be subsequently $c_{u+1}$ and then $d_{v+1}$, $c_{u+2}$,  and so on. It follows that $c_u<d_v<c_{u+1}<d_{v+1}<\cdots <c_{k+1}<d_{v+k+1-u}$.
Now we transform $\Pi_{i\dashrightarrow i+1}$ to $\Pi_{j\dashrightarrow j'}$, where $j=c_{k+1}$ and $j'=d_{v+k+1-u}>j$ so that the Claim applies. It remains to show that this transform does not change the total weight of the path. Observe that $w(T)$ contains the segment $ d_{v+k+1-u} \to d_{v+k-u} \to \cdots \to d_v$. This is due to the fact that $c_{k+1}+1$ is the smallest bold faced number and that $c_{k+1}>d_{v+k-u}$.
By Lemma \ref{l-2.1} $w(T)$ is a full cycle containing the segment
$c_{k+1}\to c_k\to \cdots \to c_u$.
Thus the path $\Pi_{i}$ looks like
$i=c_u\RA c_{u-1} \RA \cdots \RA d_{v+k+1-u} \to d_{v+k-u} \to \cdots \to d_v=i+1$. By adding $k-u$ East steps at the beginning and removing $k-u$ East steps at the end, we do not change the total weight of the path and obtain $j=c_{k+1}\RA c_{k}\RA \cdots \RA d_{v+k+1-u}=j'$, the path $\Pi_{j\dashrightarrow j'}$ in $w(T)$, as desired.

\item{Case 3b:} if $i+1=c_u$ and $i$ is not in column $1$. The situation is similar to Case 3a. We will include the details here for  convenience.
Assume $i$ is in another column with entries $d_1,\dots, d_{k+1}$. Since $T$ is row and column increasing
$d_u\ge i+2$, we may assume $i=d_v$ for some $v< u$. Now after $d_v=i$ and  $c_u=i+1$ are inserted into $T$, $d_v$ and $c_u$ are both active, the next entries inserted into the two columns must be $d_{v+1}$ and then $c_{u+1}$, $d_{v+2}$,  and so on. It follows that $d_v<c_u<d_{v+1}<c_{u+1}<\cdots<d_{v+k+1-u}<c_{k+1}$.
Now we transform $\Pi_{i\dashrightarrow i+1}$ to $\Pi_{j\dashrightarrow j'}$, where $j=d_{v+k+1-u}$ and $j'=c_{k+1}>j$ so that the Claim applies. It remains to show that this transform does not change the total weight of the path. Observe that $w(T)$ contains the segment $ d_{v+k+1-u} \to d_{v+k-u} \to \cdots \to d_v$. This is due to the fact that $c_{k+1}+1$ is the smallest bold faced number and that $c_{k+1}>d_{v+k+1-u}$.
By Lemma \ref{l-2.1} $w(T)$ is a full cycle containing the segment
$c_{k+1}\to c_k\to \cdots \to c_u$.
Thus the path $\Pi_{i}$ looks like
$i=d_v\RA \cdots \RA c_{k+1} \RA \cdots \RA c_u=i+1$. By adding $k-u$ East steps at the beginning and removing $k-u$ East steps at the end, we do not change the total weight of the path and obtain $j=d_{v+k+1-u}\RA \cdots \RA d_v \RA\cdots \RA c_{k+1}=j'$, the path $\Pi_{j\dashrightarrow j'}$ in $w(T)$, as desired.
\end{proof}

\section{Combinatorial consequences\label{s-consequence}}

It is a simple consequence of the cyclic lemma that for a coprime pair $(m,n)$, the number of Dyck paths in the $m\times n$ rectangle is
\begin{align}
  \label{e-3.1}
{1\over m}{m+n-1\choose n}=\frac{1}{m+n}\binom{m+n}{m}.
\end{align}
In the Fuss case  $m=kn+1$, we have shown that the map $D\RA T(D)$ is a bijection between rational Dyck paths
$\CD_{m,n}$ in the  $m\times n $ lattice rectangle and a class of ``special" $(k+1)\times n$  standard Young tableaux $\TAU_n^k$ which are characterized in Lemma \ref{l-1.4}.

Now given $T\in  \TAU_n^k$ let us denote by $red(T)$ the tableau $T'\in  \TAU_{n-1}^k$ obtained by removing the  first column of $T$ and then reduce the remaining entries to be contiguous integers starting from $1$. The simple algorithm that effects this reduction, is to replace each remaining  letter $i$ by $i-d_i$ if it  is greater than $d_i$ letters in the first column. For instance, the path $D$ below
$$
 \includegraphics[width=5.4   in]{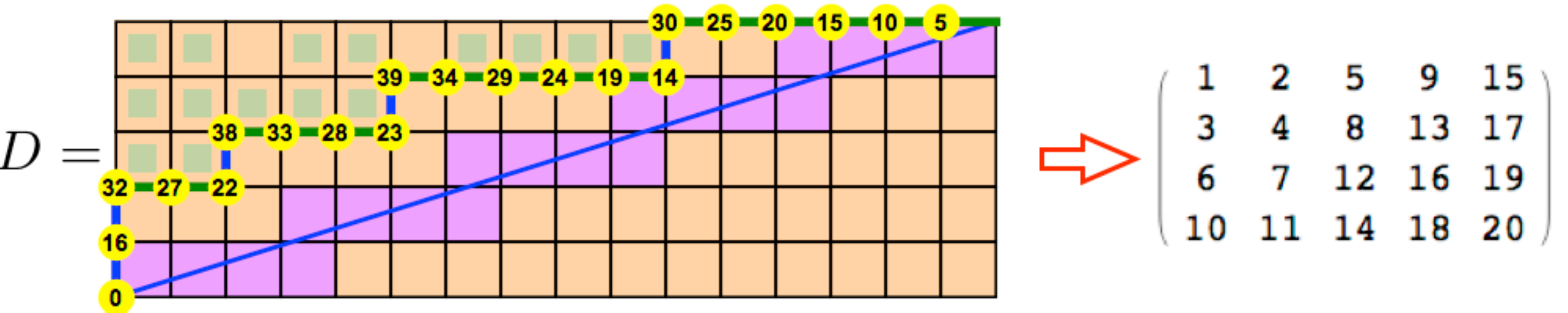}
$$
yields the tableau $T(D)\in \TAU_5^3$
on the  right. In turn, we see below that $T(D)$ reduces to a tableau
$T'\in \TAU_4^3$
\begin{align}
  \label{e-3.3}
red\Big( \begin{pmatrix}
           1 & 2 & 5 & 9 & 15 \\
           3 & 4 & 8 & 13 & 17 \\
           6 & 7 & 12 & 16 & 19 \\
           10 & 11 & 14 & 18 & 20 \\
         \end{pmatrix}
 \Big)
  = \begin{pmatrix}
      1 & 3 & 6 &11 \\
      2 & 5 & 9 & 13 \\
      4 & 8 & 12 & 15 \\
      7 & 10 & 14 & 16 \\
    \end{pmatrix}.
\end{align}

Now it follows from \eqref{e-3.1} and it is geometrically obvious that the
map $D\RA red\big(T(D)\big)$ is necessarily many to one. Thus it is  natural to ask:

\noindent
\textbf{Question:} Given $T'\in \TAU_{n-1}^k$ for how many $D\in\CD_{kn+1,n}$ we have $red\big(T(D)\big)=T'$?

\subsection{Two solutions to the question}

It turns out that not only there is a beautiful answer but there is even a revealing algorithm that constructs all the pre-images of this map. We will give two solutions. The first one relies on the connections between $\Phi^{-1}(D)$
and $\Phi^{-1}(D')$. Our result may be stated as follows.

\begin{theo}[First solution to $red(T)=T'$]
  \label{t-3.1}
  Given $T'\in \TAU_{n-1}^k$,
the number of $D\in \CD_{kn+1,n}$ such that $red\big(T(D)\big)=T'$ is given by the last letter of the first column of $T'$.
Moreover the collection of all solutions to this equation is obtained by the following algorithm:
\begin{enumerate}
\item  Construct the the path $D'\in \CD_{k(n-1)+1,n-1}$ that corresponds to $T'$.

\item  Construct the pre-image $\Phi^{-1}(D')$ and its successive ranks.

\item  Circle all
the ranks of  $\Phi^{-1}(D')$ that are less than $k(n-1)+1$.

\item  Cut $ \Phi^{-1}(D')$ at each one of the circled ranks and reverse the order of the two pieces.

\item Prepend to each of these paths a North step and append $k$ East steps.

\item
The list of the desired $D$ is  obtained by taking the $\Phi$ images
of the resulting Dyck paths.
\end{enumerate}
\end{theo}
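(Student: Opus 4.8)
The plan is to prove the displayed algorithm correct and then read off the count $c'_{k+1}$ (the last entry of the first column of $T'$) as a by‑product. Throughout write $n'=n-1$, $m'=k(n-1)+1$; let $c_1=1<c_2<\cdots<c_{k+1}$ be the first column of $T=T(D)$, let $T^*$ be $T$ with this column deleted, $T'=red(T^*)$ with corresponding path $D'\in\CD_{m',n'}$, and $c'_1=1<\cdots<c'_{k+1}$ the first column of $T'=T(D')$. The engine is a reduction identity: writing $\Phi^{-1}(D')=A\cdot B$ for the splitting of $\Phi^{-1}(D')$ at the vertex whose rank occupies position $c_{k+1}-k$ in the increasing rank sequence of $\Phi^{-1}(D')$ (call that rank $\rho$), one has $\Phi^{-1}(D)=N\cdot B\cdot A\cdot E^{k}$. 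To prove this I would apply Theorem \ref{t-2.1}'s edge‑label recipe to the closed walk $w(T)$ started at $1=c_1$, invoking Lemma \ref{l-2.1}: the first‑column detour $c'\to c_{k+1}\to\cdots\to c_1\to(c_{k+1}+1)$ that distinguishes $w(T)$ from $w(T^*)$ contributes the label attached to $c'$, then $k$ letters $W$ (the sources $c_{k+1},\dots,c_2$ all lie below row $1$), then one letter $S$ (the source $c_1$ lies in row $1$); as re‑rooting $w(T^*)$ (which is $w(T')$ after relabeling entries) at $c_{k+1}+1$ is a cyclic rotation of $\SW(\Phi^{-1}(D'))$, the labels along $w(T)$ spell
$$
\SW\big(\Phi^{-1}(D)\big)=S\cdot B\cdot A\cdot W^{k}.
$$
The rank‑monotonicity of the tableau entries recalled in Section \ref{s-basic} identifies the entry of value $c_{k+1}-k$ in $T'$ with the $(c_{k+1}-k)$‑th smallest rank of $\Phi^{-1}(D')$, which is where the split point comes from; and since $T$ is row‑increasing, $c_{k+1}=T_{k+1,1}<T_{k+1,2}=c'_{k+1}+(k+1)$, so $c_{k+1}-k\le c'_{k+1}$, whence (using Step 3 below) $\rho<m'$. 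Thus $\Phi^{-1}(D)$ is obtained from $\Phi^{-1}(D')$ by cutting at a vertex of rank $<m'$, swapping the pieces, prepending a North step and appending $k$ East steps — exactly the algorithm.

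For the converse I would show, using Lemma \ref{l-1.3}, that for \emph{any} $Q\in\CD_{m',n'}$ and any vertex of $Q$ of rank $\rho<m'$, splitting $Q=A\cdot B$ there and forming $N\cdot B\cdot A\cdot E^{k}$ produces a path in $\CD_{m,n}$. Indeed each prefix of $A$ has nonnegative $(m',n')$‑rank, hence nonnegative $(m,n)$‑rank by Lemma \ref{l-1.3}; a prefix of $NB$ with $b_0$ North and $a_0$ East steps satisfies $(b_0+1)m'-a_0n'\ge m'-\rho\ge1$, so again Lemma \ref{l-1.3} gives $(b_0+1)m-a_0n\ge0$; and a short displacement count (using $m-m'=k$ and that $B$ has at most $n'$ North steps) shows the rank at the end of $NBA$ equals $kn$, so the trailing $E^{k}$ descends through $kn,(k-1)n,\dots,0$. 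A few boundary situations — $k=1$, or $B$ carrying all the North steps — are handled directly via $\Delta_{m,n}(Q)=-1$. Combining both directions, $D\mapsto(\text{cut vertex})$ is a bijection from $\{D:red(T(D))=T'\}$ onto the set of vertices of $\Phi^{-1}(D')$ of rank $<m'$: distinct $D$ give distinct $\SW(\Phi^{-1}(D))$, hence distinct splittings, so the map is injective; and it is onto because $N\cdot B\cdot A\cdot E^{k}$ determines $B\cdot A$, a word which by coprimality (no nontrivial rotational symmetry) pins down both the Dyck path $\Phi^{-1}(D')$ and the rotation amount, hence $D'$ and $T'$ — so applying $\Phi$ recovers exactly the $D$ of the list.

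It remains to see that the number of vertices of $\Phi^{-1}(D')$ of rank $<m'$ equals $c'_{k+1}$. By identity \eqref{e-NS-EW} the North‑end ranks of $\Phi^{-1}(D')$ are the South‑end ranks shifted by $m'$, and the smallest South‑end rank is $0$, so the smallest North‑end rank is $m'$; consequently the first letter $N$ of $\NE(D')=\NE(\Phi(\Phi^{-1}(D')))$ sits at the position (number of ranks $<m'$)$+1$. On the other hand Theorem \ref{t-I.1} applied to $D'$ says the $N$‑positions of $\NE(D')$ are the bottom‑row entries of $T(D')=T'$ increased by $1$, so that first position is $c'_{k+1}+1$; hence (number of ranks $<m'$)$=c'_{k+1}$, as claimed, and together with the bijection above this shows the algorithm outputs exactly these $c'_{k+1}$ paths. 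I expect the reduction identity of the first step to be the real obstacle: squeezing out of Lemma \ref{l-2.1} that the inserted first‑column detour becomes the clean word $S\,B\,A\,W^{k}$, and that the split point is forced to be the vertex in rank‑position $c_{k+1}-k$, is the bookkeeping that makes the geometric ``cut, swap, prepend $N$, append $E^{k}$'' match the tableau side; Step 2's boundary cases are routine but fussy, and Step 3 is short once the rank/$\NE$ dictionary of Section \ref{s-basic} is in hand.
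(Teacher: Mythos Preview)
Your approach is essentially the paper's: both derive the reduction identity $\SW(\Phi^{-1}(D))=S\cdot BA\cdot W^{k}$ from the first-column detour in $w(T)$ versus $w(T^*)$ (the paper packages this as Lemma~\ref{l-D-red}), and both use Lemma~\ref{l-1.3} to decide exactly when $SBAW^{k}\in\CD_{m,n}$. Two points are worth flagging. First, your identification of the cut vertex as the one in rank-position $c_{k+1}-k$ of $\Phi^{-1}(D')$ is equivalent to, but phrased differently from, the paper's ``smallest rank of the middle segment''; the paper never makes your tableau-entry description explicit. Second, your count argument via Theorem~\ref{t-I.1} (first $N$ in $\NE(D')$ sits at position $c'_{k+1}+1$, hence there are $c'_{k+1}$ ranks below $m'$) is genuinely cleaner than the paper's, which asserts but does not really justify that the bottom entry of column~1 of $T'$ gives the number of solutions. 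One small wording slip: in your converse direction, ``each prefix of $A$ has nonnegative $(m',n')$-rank'' is not what you need---you need the $(m',n')$-rank of each prefix of $N\cdot B\cdot A$, which is $m'+(\text{change along }BA_{\text{prefix}})\ge m'-\rho>0$; this is exactly the shifted-rank computation the paper carries out, and your sketch should be read that way.
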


To make sure that we understand  the algorithm we will work out the example
\begin{align}
T'=\begin{pmatrix}
     1 & 3 & 6 & 11 \\
     2 & 5 & 9 & 13 \\
     4 & 8 & 12 & 15 \\
     7 & 10 & 14 & 16 \\
   \end{pmatrix}.
\label{e-3.4}
\end{align}

\noindent
In this case the theorem predicts there will be $7$ solutions to the equation
$red\big(T(D)\big)=T'$. Step  $( 1)$ gives
\begin{align}\label{e-D'}
\hspace{2cm} \vcenter{\includegraphics[width=3   in]{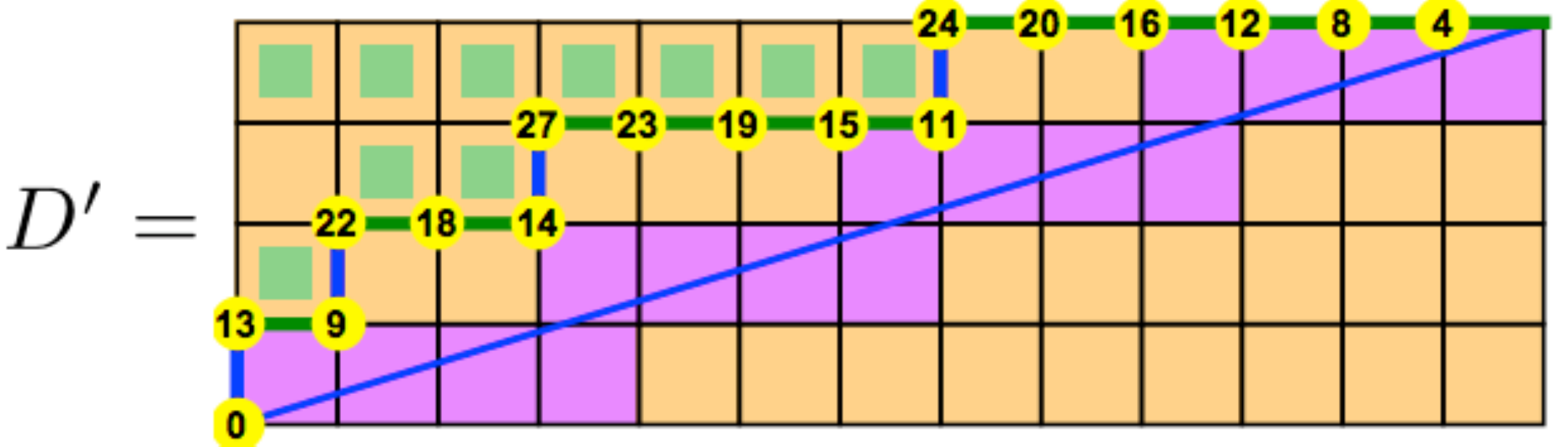}  }
\end{align}

Since in this case $k(n-1)+1=13$, Step
$( 2)$ and Step $( 3)$  give
\begin{align}
  \label{e-bar-D'}
\hskip -.28in\includegraphics[width=3.5   in]{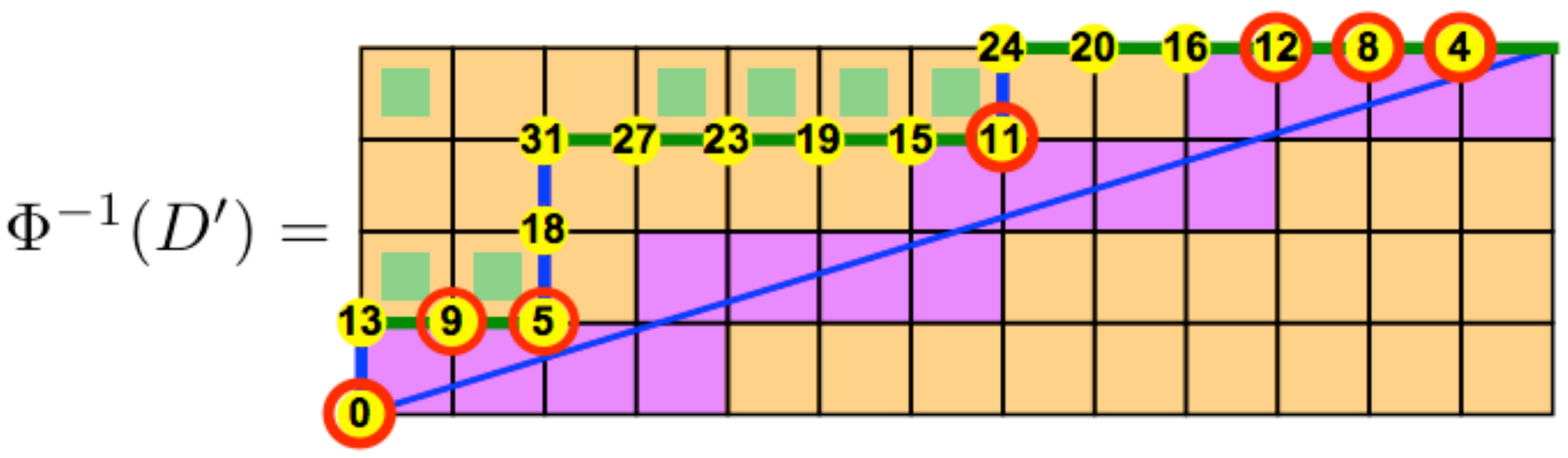}
\end{align}

As predicted we now see $7$ circled ranks. We  carried out below all but the last step  only for  $6$ of the circled ranks. Namely, ranks $9,5,11,12,8,4$. Figure \ref{fig:cutsix} below exhibits the ``cuts'', the
``reorders'' and the construction of the
corresponding $\Phi^{-1}(D)$. The rank of the node of the cut is placed under each path.
\begin{figure}
  [!ht]
$$
\hskip -.35 in\includegraphics[width=3.5   in]{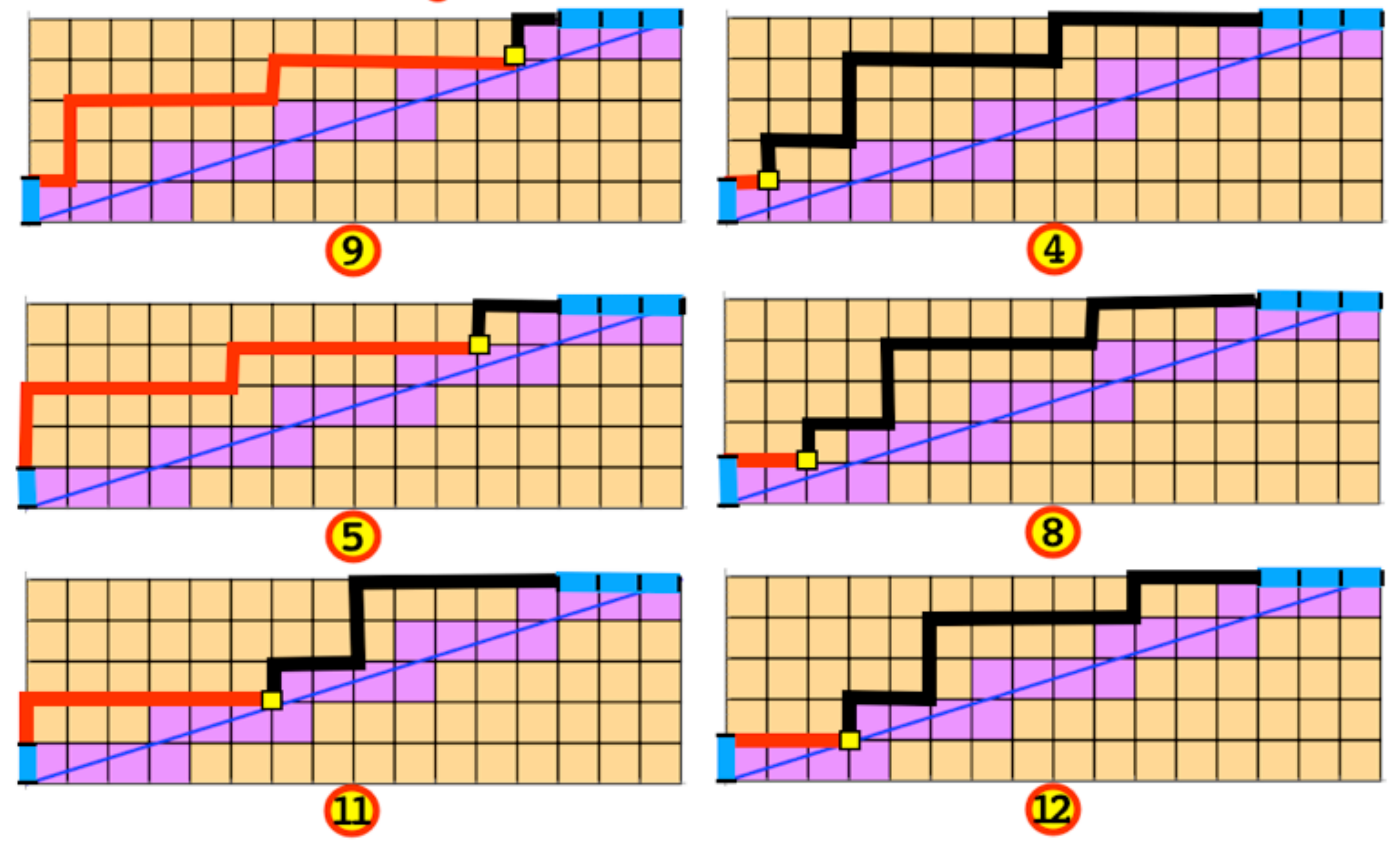}
$$
\caption{$6$ solutions correspond to $red(T)=T'$.\label{fig:cutsix}}
\end{figure}

Let us explain first this geometric view of the map $T\to red(T)$.
\begin{lem}\label{l-D-red}
  Let $T=T(D)$ for a Dyck path $D\in \CD_{kn+1,n}$. Then there is a unique
   Dyck path $D'\in \CD_{kn'+1,n'}$, where $n'=n-1$ such that $red(T(D))=T(D')$. Moreover, $\Phi^{-1}(D')$ is obtained from $\Phi^{-1}(D)$ by the following process: i) remove the starting North step and the ending $k$ East steps; ii) split the resulting path as $BA$ at the node with the smallest rank; iii) set $\Phi^{-1}(D')=AB$, the circular rearrangement of $BA$.
\end{lem}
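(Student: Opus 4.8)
The plan is to build $D'$ directly from $D$ via the bijection of Lemma \ref{l-1.4}: the characteristic $a<b<c<d$ property of tableaux in $\TAU_n^k$ is plainly inherited when we delete the first column and contract the labels, so $red(T(D))$ is again a valid tableau, hence equals $T(D')$ for a unique $D'\in\CD_{kn'+1,n'}$ with $n'=n-1$. This settles existence and uniqueness of $D'$ and is essentially a restatement of remarks already made in Section \ref{s-consequence}. The substance of the lemma is the last sentence, describing $\Phi^{-1}(D')$ in terms of $\Phi^{-1}(D)$, and this is where the work lies.

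First I would set up notation exactly as in Section \ref{s-proof}: write $c_1<c_2<\cdots<c_{k+1}$ for the first column of $T=T(D)$ (so $c_1=1$), let $T^*$ be $T$ with the first column removed but labels \emph{not} contracted, and let $w(T)$, $w(T^*)$ be the closed walks produced by the algorithm of Theorem \ref{t-I.2}. By Lemma \ref{l-2.1}, $w(T)$ contains the segment $c'\RA c_{k+1}\RA c_k\RA\cdots\RA c_1\RA c_{k+1}+1$ for some $c'\in T^*$, and deleting the block $c_{k+1}\RA\cdots\RA c_1$ (i.e.\ replacing that segment by $c'\RA c_{k+1}+1$) yields $w(T^*)$. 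By Theorem \ref{t-2.1}, reading the edge labels of $w(T)$ starting at $1$ gives $\SW(\Phi^{-1}(D))$, and reading the edge labels of $w(T^*)$ gives $\SW(\Phi^{-1}(D'))$ after the obvious contraction of labels. The key observation is that along the first-column block the edge $c_1\RA c_{k+1}+1$ is labelled $S$ (since $c_1=1$ lies in row $1$ of $T$) and the $k$ edges $c_{k+1}\RA c_k\RA\cdots\RA c_1$ are all labelled $W$ (none of $c_2,\dots,c_{k+1}$ lies in row $1$). Thus, relative to $w(T^*)$, the walk $w(T)$ inserts, right after the step into $c'$, exactly the block "$k$ consecutive $W$'s followed by one $S$"; equivalently, in $\SW(\Phi^{-1}(D))$ that block of $k$ $E$-steps and one $N$-step is inserted at the position corresponding to the node immediately after $c'$.

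Next I would translate this back to paths. Since $c'$ is characterized in Lemma \ref{l-2.1} as the unique entry of $T^*$ with a directed edge into $c_{k+1}$, and since $c_{k+1}+1$ is the smallest bold entry of $T$, one checks that in the rank bookkeeping of Theorem \ref{t-2.1} the node of $\Phi^{-1}(D)$ entered by the step labelled $S$ on the edge $c_1\RA c_{k+1}+1$ is exactly the node of smallest rank of $\Phi^{-1}(D')$ after the contraction — this is because $c_1=1$ receives rank $0$ and, by \eqref{e-2.2} applied to $T$, the rank at the position of $c_1$ is the global minimum over the whole walk, while the omitted block contributes net rank change $km - n \cdot 0 = \ldots$; more cleanly: the inserted block is precisely one $N$-step (prepended) and $k$ $E$-steps (appended) wrapped around the cyclic cut point, and the cyclic cut point is the minimum-rank node. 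Hence, reading off $\SW(\Phi^{-1}(D))$ versus $\SW(\Phi^{-1}(D'))$: if we write $\Phi^{-1}(D')=AB$ with the split at the smallest-rank node, then $\Phi^{-1}(D)$ is obtained by forming $BA$, prepending a North step and appending $k$ East steps. Inverting this description gives exactly i)--iii) of the lemma.

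The main obstacle I expect is the bookkeeping in the previous paragraph: pinning down precisely that the node at which $\Phi^{-1}(D)$ is "cut" (equivalently, where the $N\,E^k$ block is wrapped) is the \emph{minimum-rank} node of $\Phi^{-1}(D')$, rather than merely \emph{some} node. This requires combining (a) the fact from Lemma \ref{l-2.1}(2) that the deletion happens at the unique in-neighbour $c'$ of $c_{k+1}$, (b) the rank identity of Theorem \ref{t-2.1} to see that the rank assigned to $c'$ in $w(T^*)$ is minimal — which should follow because $c_{k+1}+1$ being the smallest bold entry forces, via step (5) of the algorithm in Theorem \ref{t-I.2}, that $c'$ is reached only after a maximal descending run, i.e.\ $c'$ sits at a local rank minimum — and (c) \eqref{e-2.2} for $T^*$, already available by the inductive hypothesis of Theorem \ref{t-2.1}, to upgrade "local minimum" to "global minimum". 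Once that identification is made, everything else is a direct translation between walks and $\SW$-words using Theorems \ref{t-I.2} and \ref{t-2.1} and Lemma \ref{l-2.1}, with no further computation.
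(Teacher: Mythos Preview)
Your overall setup (via Lemma~\ref{l-2.1} and Theorem~\ref{t-2.1}) is the same as the paper's, but there is a genuine gap: you have misidentified the cut point. The middle portion of $\Phi^{-1}(D)$ corresponds, in walk order, to the entries $v=c_{k+1}+1,\ldots,u=c'$ of $T^*$; by \eqref{e-2.2} the minimum rank among these is attained at the \emph{smallest label} in $T^*$, which is $t_2$ (the second entry of row~$1$ of $T$), not at $c'$. Your claim that ``the rank assigned to $c'$ in $w(T^*)$ is minimal'' is false in general --- in the paper's worked example with first column $(1,2,4,7)$ one has $c'=11$ while $t_2=3$, and $\rank(3)<\rank(11)$. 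The source of the confusion is your sentence ``reading the edge labels of $w(T^*)$ gives $\SW(\Phi^{-1}(D'))$'': this is only correct if the reading starts at the entry corresponding to $1'$ in $T'$, namely $t_2$, \emph{not} at $c_{k+1}+1$ (where $w(T)$ happens to enter $T^*$). These two entries coincide only in degenerate cases.

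The paper's proof hinges on exactly this distinction. It marks the position $(\uparrow 1')=t_2$ inside $w(T)$, writes the middle of $\Phi^{-1}(D)$ as $B$ (edge labels from $v$ up to just before $t_2$) followed by $A$ (edge labels from $t_2$ through $u$), and observes that $\Phi^{-1}(D')=AB$ because $AB$ is precisely what one obtains by reading $w(T^*)$ starting at $t_2$. Since $A$ then begins at the starting node of $\Phi^{-1}(D')$, that node has rank~$0$ in the $(m',n')$ system while all other internal ranks of $\Phi^{-1}(D')$ are strictly positive; hence the junction between $B$ and $A$ is the unique minimum-rank node of the middle. No argument about $c'$, maximal descending runs, or upgrading local minima to global minima is needed --- once $t_2$ is correctly identified as the relevant entry, the ``main obstacle'' you anticipate disappears.
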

\begin{proof}
Recall that the SW sequence of $\Phi^{-1}(D)$ is obtained from the closed path $\Pi_{1\dashrightarrow 1}$ from $1$ to $1$ in the closed walk $w(T)$ by simply replacing each $i\RA j$ by an $S$ if $i<j$ (or equivalently, $i$ is in row 1 of $T$)
 and by a $W$ if $i>j$ (or equivalently, $i$ is not in row 1 of $T$).

Now notice that
$$\Pi_{1\dashrightarrow 1}=1\RA v\RA \cdots \RA (\uparrow 1')\RA \cdots \RA u\RA c_{k+1}\cdots \RA c_2\RA 1,  $$
where we used $(\uparrow 1')$ to denote the updated label of $1'$ in $T$. Thus the SW sequence of $\Phi^{-1}(D)$ starts with an $S$ and ends with $k$ copies of $W$'s. The middle part from $v$ to $u$ corresponds to $BA$
where $A$ is obtained from the path $\Pi_{1'\dashrightarrow u'}$ in $w(T')$ and $B$ is obtained from $\Pi_{v'\dashrightarrow 1'}$.
This is equivalent to saying that if we cut $\Phi^{-1}(D')$ at certain starting rank as $AB$, i.e., $A$ followed by $B$, then $\Phi^{-1}(D)=SBAW^k$
must contain the cyclic rearrangement $BA$ of the steps of $D'$.

Finally, since $B$ ends with rank $0$ in $D'$
and $A$ starts with rank $0$ and all their other ranks (in $D'$) are positive, we see that to separate the middle part of $\Phi^{-1}(D)$ as $BA$, we have to cut at the smallest rank.
\end{proof}

\begin{proof}
  [Proof of Theorem \ref{t-3.1}]
Given $T'=T(D')$ we can construct the closed walk $w(T')$ of $T'$ and hence the Dyck path $\Phi^{-1}(D')$.
To construct $T=T(D)$ with $red(T)=T'$, it is sufficient to construct $\Phi^{-1}(D)$.

By Lemma \ref{l-D-red}, such $\Phi^{-1}(D)$ must be obtained by: i) First cut $\Phi^{-1}(D')$ at a node as $AB$; ii) construct the circular rearrangement $BA$ and then $SBAW^k$.
 We claim that the only circular rearrangements of $D'$ that can contribute
to such a construction of $\Phi^{-1}(D)$ are those obtained by cutting $D'$ at a node of rank smaller than $m'=k(n-1)+1$.

To see this assume we cut $\Phi^{-1}(D')$ at a node of rank $r_i$ to have $\Phi^{-1}(D')=AB$. We need to check for which $r_i$, the constructed path $\WD= SBAW^k$ is in $\CD_{m,n}$. Here we are involved in using two rank systems, one for $\CD_{m',n'}$ and the other for $\CD_{m,n}$.
Lemma \ref{l-1.3} says that positive ranks for $\CD_{m',n'}$ are still positive when treated in $\CD_{m,n}$. Thus changing the rank system from $\CD_{m',n'}$ to $\CD_{m,n}$ does not change the relative order of the ranks. (Note that rank $0=n'm'-m'n'$ becomes $-1=n'm-m'n$, so we need the precise range as stated in Lemma \ref{l-1.3}.)

If we use the rank system of $\CD_{m',n'}$, i.e., starting with $0$ at the beginning, adding $m'=k(n-1)+1$ after a North step, and subtracting $n-1$ after an East step, then the ranks of $\WD$ are: i) the ranks of $AB$ shifted by adding $m'-r_i$ so that $BA$ starts at rank $m'$;
ii) $0$ as the rank of the starting $S$ and $m'-j(n-1)$ where $0\le j\le k-1$ as the ranks for the ending $k$ $W$'s. Thus the smallest rank is $\min(0,m'-r_i)$, which is $\ge 0$ only if $m'>r_i$, and by Lemma \ref{l-1.3} the minimal rank of $\WD$ is also $0$ when treated as in $\CD_{m,n}$. This implies cutting at a node of rank $r_i<m'$ do contribute. For those $r_i>m'$,
 $m'-r_i$ remains negative in the rank system of $\CD_{m,n}$ and hence cutting at a node of rank $r_i>m'$ does not contribute. For $r_i=m'$, we have
$A=S$ so that $\WD=SBSW^k$, which is clearly not in $\CD_{m,n}$ since the rank at the end of $B$ is $-1$.

Finally, for every such candidate, we remove the first North step and the final $k$ East steps, and split the resulting path at the smallest rank as $BA$. Then $AB$ will be the desired Dyck path $\Phi^{-1}(D')$ preserving all the relative order of the ranks.

This not only proves our claim, but also completes our proof of Theorem \ref{t-3.1} since it also beautifully explains the fact that the bottom entry of the first column of $T(D')$ predicts the number of solutions of the equation $red\big(T(D)\big)=T(D')$.
In fact, the algorithm for constructing all the solutions of this equation is also an immediate consequence of all the
observations we made during our proof.

\begin{figure}[!ht]
$$
\includegraphics[width=4   in]{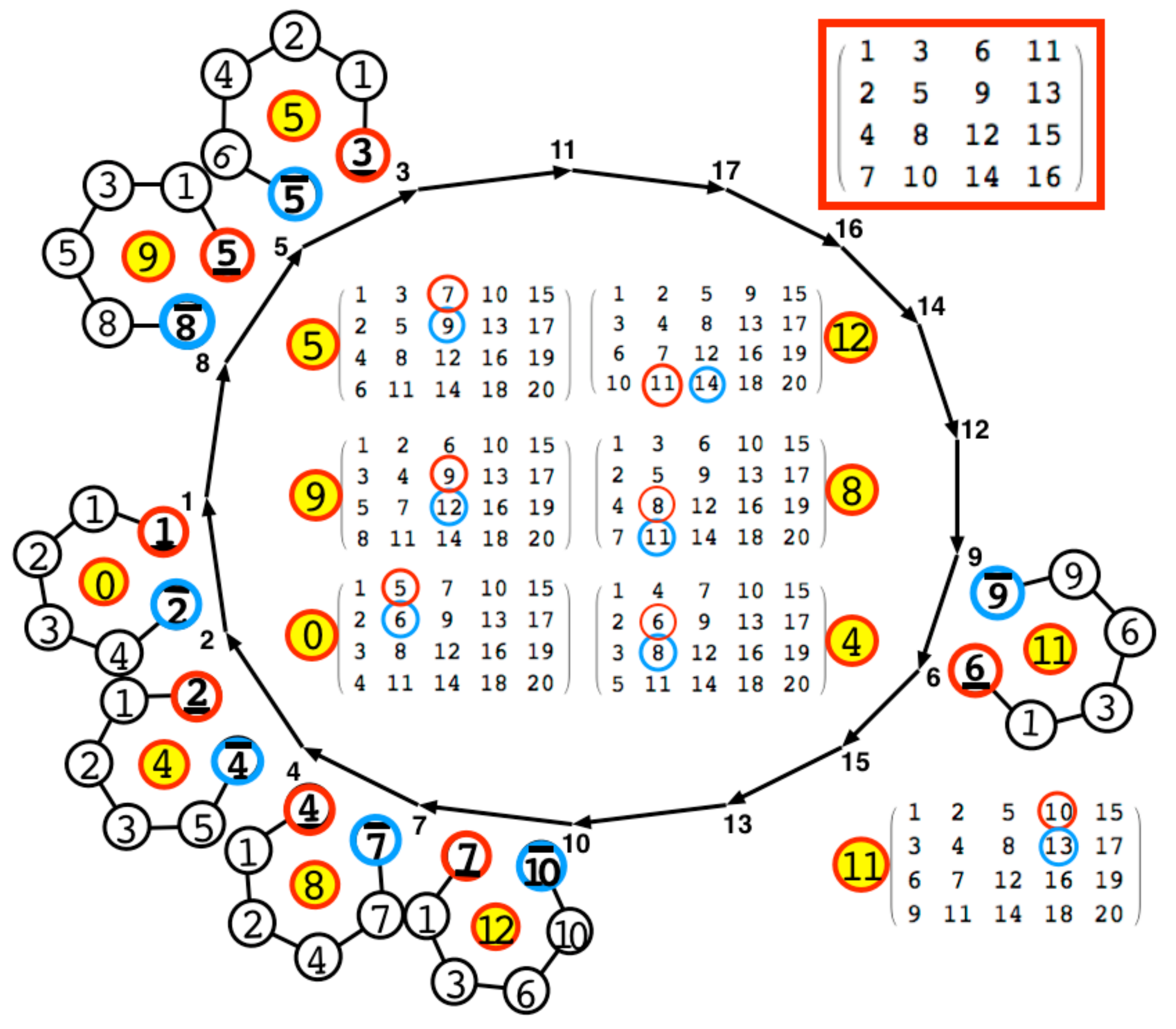}.
$$
\caption{A big example constructing $w(T)$ from $w(T')$ where $red(T)=T'$.\label{fig:chaos}}
\end{figure}

 All these observations should be quite evident in Figure \ref{fig:chaos}. There the tableau $T'$ is highlighted by a red frame and put to the right upper corner.
Each of the loops enters $w(T')$ at the (overlined)\footnote{we distinguish by overline or underline for black-white print.} node surrounded by a blue circle and exits
$w(D')$ at a red  circled (underlined) node. At the
center of each loop we inserted the rank at which $\Phi^{-1}(D')$ is cut. Likewise, each tableau $T(D)$ of the solution $D$ of the equation $red\big(T(D)  \big)=T(D')$ is  labeled by the rank at which $\Phi^{-1}(D')$ is cut.
The list of the tableaux $T(D)$ is ordered as closely as possible to the corresponding loops. In each tableau $T(D)$ we circled the labels that correspond  to the entry
from the loop to $w(D')$  and exit from
$w(D')$ to the loop as a walk through the entries in $T(D)$.
\end{proof}

In Figure \ref{fig:chaos}, if we focus on the bottom row of the $T$'s, we will see that they only differ at the first entry.
This leads to our second solution to finding $T$ with $red(T)=T'$.

Our result relies on more characterizations of $T\in \TAU_n^k$. Firstly, the characterization property of $T\in \TAU_n^k$ in Lemma \ref{l-Tnk} also leads to a natural involution $\psi: \TAU_{n}^{k} \mapsto \TAU_{n}^{k}$, by first flipping vertically, then flipping horizontally, and finally reversing the the entries. More precisely, the $(i,j)$-entry of $\psi(T)$ is given by $\psi(T)_{i,j}=(k+1)n+1-T_{k+2-i,n+1-j}$. Clearly we have $\psi(T)\in \TAU_{n^k}$ since it satisfies the row increasing, column increasing, and the $a<b<c<d$ condition.

The following characterization follows directly from Lemma \ref{l-1.4}, Corollary \ref{c-S-positions} and the involution $\psi$.
\begin{lem}\label{l-Tnk}
A tableau $T\in \mathcal T_{n}^k $ is uniquely determined by its first row entries. It is also uniquely determined by its bottom row entries. Conversely, we have
 \begin{enumerate}
 \item[i)]
    an increasing sequence $(t_1,t_2,\dots, t_n)$ with $t_1=1$ is the top row entries of $T\in \mathcal T_{n}^k $ if and only if
$t_j \le 1+(j-1)(k+1)$ for all $j$;
\item[ii)]
an increasing sequence $(b_1,b_2,\dots, b_n)$ with $b_n=(k+1)n$ is the bottom row entries of $T\in \mathcal T_{n}^k $ if and only if
$b_j \ge j(k+1)$ for all $j$.

 \end{enumerate}
\end{lem}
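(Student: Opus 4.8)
The plan is to derive the whole statement from three facts already in hand: the bijection $D\mapsto T(D)$ between $\CD_{kn+1,n}$ and $\TAU_n^k$ (Lemma~\ref{l-1.4}); the criterion of Corollary~\ref{c-S-positions} telling which increasing sequences arise as sets of $S$-positions of paths; and the involution $\psi$ of $\TAU_n^k$ introduced just above the lemma.

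First I would handle the first row. For $T=T(D)$ the first row of $T$ is, by construction in the Filling Algorithm and as noted in the discussion following it, exactly the list of positions of the letters $S$ in $\SW(D)$, all remaining positions carrying a $W$; hence the first row of $T$ recovers $\SW(D)$, hence $D$, hence $T=T(D)$ itself. This shows $T$ is determined by its first row. For the converse (i), the same observation together with Lemma~\ref{l-1.4} identifies the set of first-row sequences of tableaux in $\TAU_n^k$ with the set of $S$-position sequences of paths in $\CD_{kn+1,n}$, and Corollary~\ref{c-S-positions} describes the latter as the increasing $(t_1,\dots,t_n)$ with $t_j\le 1+(j-1)(k+1)$ for all $j$; the case $j=1$ forces $t_1=1$.

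Next I would transport these conclusions to the bottom row via $\psi$. Since $\psi$ is an involution of $\TAU_n^k$ and $\psi(T)_{i,j}=(k+1)n+1-T_{k+2-i,\,n+1-j}$, the first row of $\psi(T)$ is the reverse complement $\bigl((k+1)n+1-b_n,\ (k+1)n+1-b_{n-1},\ \dots,\ (k+1)n+1-b_1\bigr)$ of the bottom row $(b_1,\dots,b_n)$ of $T$. Thus the bottom row of $T$ determines the first row of $\psi(T)$, which by the first part determines $\psi(T)$, and hence $T=\psi(\psi(T))$; this is the bottom-row uniqueness. For (ii) I would substitute $t'_j=(k+1)n+1-b_{n+1-j}$ into the two conditions on the first row of $\psi(T)$ given by (i): $t'_1=1$ becomes $b_n=(k+1)n$, and, with $i=n+1-j$, $t'_j\le 1+(j-1)(k+1)$ becomes $b_i\ge(k+1)i$; conversely, any increasing $(b_1,\dots,b_n)$ with $b_n=(k+1)n$ and $b_i\ge(k+1)i$ reverse-complements to an admissible top-row sequence, hence produces a $T'\in\TAU_n^k$ with bottom row $(b_1,\dots,b_n)$.

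The one place where some honest (if brief) work is needed — the main obstacle — is the assertion, used throughout the previous step, that $\psi$ actually maps $\TAU_n^k$ into itself. Row- and column-increasingness of $\psi(T)$ are immediate from the formula, so what remains is that the defining ``$a<b<c<d$'' condition is self-dual under the composition of a vertical flip, a horizontal flip and the entry complementation $x\mapsto(k+1)n+1-x$. I would argue by contradiction: a forbidden configuration in $\psi(T)$, namely entries $a'<b'<c'<d'$ with $d'$ directly below $a'$ and with $b',c'$ in one column, complements back to entries of $T$ in which the complement of $d'$ sits directly above, and is smaller than, the complement of $a'$, while the complements of $b',c'$ fall strictly between these two in value and in a single column — contradicting $T\in\TAU_n^k$. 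Once this index check is granted, everything else is routine bookkeeping with the substitution $t'_j=(k+1)n+1-b_{n+1-j}$.
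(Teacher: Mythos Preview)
Your proposal is correct and follows precisely the approach the paper indicates: the paper's own proof is the single sentence ``follows directly from Lemma~\ref{l-1.4}, Corollary~\ref{c-S-positions} and the involution $\psi$,'' and you have unpacked exactly these three ingredients. Your treatment is in fact more careful than the paper's, since you spell out the verification that $\psi$ preserves the $a<b<c<d$ condition, which the paper simply asserts as ``clearly.''
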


Note that given the bottom row entries of $T$, we can construct the whole tableau $T$ as follows: i) construct the top tow entries of $\psi(T)$;
ii) use the filling algorithm to construct the whole tableau $\psi(T)$;
iii) obtain $T$ by applying $\psi$ to $\psi(T)$.

Now we are ready to present and prove our second solution.
\begin{theo}[Second solution to $red(T)=T'$]
 Given $T'\in \TAU_{n-1}^k$,
the number of $T\in \TAU_n^k$ such that $red(T)=T'$ is given by the last letter of the first column of $T'$.
Moreover, if the bottom rows of $T$ and $T'$ are $b_1,\dots, b_n$ and  $b_1',\dots, b_{n-1}'$ respectively, then
$b_j= b_{j-1}'+k+1$ for $2\le j\le n$, and $b_1$ can be any one of the $b_1'$ numbers $k+1,k+2,\dots, k+b_1'$.
\end{theo}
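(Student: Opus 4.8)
The plan is to reduce everything to the bottom-row characterization in Lemma~\ref{l-Tnk}(ii), which turns the statement into an elementary counting problem about increasing sequences. First I would recall that, by Lemma~\ref{l-Tnk}, a tableau $T\in\TAU_n^k$ is uniquely determined by its bottom row $(b_1,\dots,b_n)$, and that the admissible bottom rows are exactly the increasing sequences with $b_n=(k+1)n$ and $b_j\ge j(k+1)$ for all $j$; the same statement holds for $\TAU_{n-1}^k$ with $(b_1',\dots,b_{n-1}')$, $b_{n-1}'=(k+1)(n-1)$, and $b_j'\ge j(k+1)$. So the whole theorem becomes: describe, in terms of the bottom row of $T'$, which bottom rows of $T$ satisfy $red(T)=T'$, and count them.

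Next I would analyze what the operation $red$ does to bottom rows. Removing the first column of $T$ deletes the entries $c_1<c_2<\dots<c_{k+1}$ of column~1, among which $c_{k+1}=b_1$ is the bottom-row entry in column~1; the remaining bottom-row entries are $b_2<b_3<\dots<b_n$, and reducing to a contiguous sequence subtracts from each surviving entry the number of column-1 entries below it. The key structural observation (which I would justify using the $a<b<c<d$ characterization of Lemma~\ref{l-1.4}, exactly as in Case 3 of the proof of Theorem~\ref{t-2.1}) is that once $c_u$ and the column-$j$ entry in the same row are both active, the columns fill alternately, so column~$1$ is filled ``last'' in a precise sense: every bottom-row entry $b_j$ with $j\ge 2$ has exactly $k+1$ of the (reduced) entries of columns $1$ through $j$ weakly below it — equivalently, after deleting column~$1$ and reducing, $b_j$ becomes $b_{j-1}'$. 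I would make this rigorous by showing $red(T)_{k+1,j-1}=b_j-(k+1)$ for $2\le j\le n$: the entry $b_j$ lies above exactly $k+1$ column-$1$ entries... more carefully, it exceeds exactly $k+1$ entries of column~$1$ when $j\ge 2$, because $b_1=c_{k+1}<b_2<\dots$ and $c_1,\dots,c_{k+1}$ are all $\le b_1<b_j$. Hence $red(T)_{k+1,j-1}=b_j-(k+1)$, i.e. $b_j=b_{j-1}'+k+1$ for $2\le j\le n$; this is the first assertion, and it shows $b_2,\dots,b_n$ are completely forced by $T'$.

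It then remains to determine the possible values of $b_1$. Since $(b_1,\dots,b_n)$ must be an admissible bottom row of $\TAU_n^k$, Lemma~\ref{l-Tnk}(ii) requires $b_1\ge k+1$ and $b_1<b_2=b_1'+k+1$, i.e. $k+1\le b_1\le b_1'+k$; and conversely any such $b_1$ gives an increasing sequence satisfying all the inequalities $b_j\ge j(k+1)$ (for $j\ge 2$ they hold because $b_j=b_{j-1}'+k+1\ge (j-1)(k+1)+(k+1)=j(k+1)$, and $b_n=b_{n-1}'+k+1=(k+1)n$ as required), so by Lemma~\ref{l-Tnk} there is a tableau $T\in\TAU_n^k$ with this bottom row, and by the first part it satisfies $red(T)=T'$. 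The number of allowed values of $b_1$ is $(b_1'+k)-(k+1)+1=b_1'$, which equals the last entry $c_{k+1}'$ of... no: $b_1'$ is the bottom-row column-$1$ entry of $T'$, i.e. the last letter of the first column of $T'$, matching the claim. I would also note consistency with Theorem~\ref{t-3.1}: both solutions give the count $b_1'$.

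The main obstacle I expect is the structural claim that $b_j=b_{j-1}'+k+1$ for $j\ge 2$, i.e. that the reduction ``shifts'' the bottom row so cleanly. It is intuitively forced by the ``fill under the smallest active entry'' rule together with the $a<b<c<d$ property — once column~$1$'s bottom entry $c_{k+1}$ is small (it is $\le b_1<$ everything in columns $\ge 2$ in the bottom row), deleting column~$1$ just removes $k+1$ entries all lying below each bottom-row entry of columns $2,\dots,n$ — but writing the argument carefully requires tracking how many deleted column-$1$ entries sit below each surviving entry and using the alternating-insertion structure from Lemma~\ref{l-1.4}; everything after that is the elementary inequality bookkeeping above.
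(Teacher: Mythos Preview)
Your approach is correct and is essentially the same as the paper's: reduce to the bottom-row characterization of Lemma~\ref{l-Tnk}, observe that deleting column~1 subtracts $k+1$ from each surviving bottom-row entry, and then count the admissible values of $b_1$. The paper's proof is two sentences long and uses exactly these ingredients.

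The only place you diverge is in overestimating the difficulty of the ``shift'' step $b_j=b_{j-1}'+k+1$. You flag this as the main obstacle and suggest it requires the alternating-insertion structure from the proof of Theorem~\ref{t-2.1}, but in fact your own parenthetical remark is already the complete argument: since $c_{k+1}=b_1$ and the column is increasing, every column-1 entry is $\le b_1<b_j$ for $j\ge 2$, so reducing subtracts exactly $k+1$ from each such $b_j$. No finer structural analysis is needed, and the paper dispatches this in one line (``since the column~1 entries of $T$ are all less than $b_j$ for $j\ge 2$'').
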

\begin{proof}
The equality $b_j=b_{j-1}'+k+1$ is clear, since the column $1$ entries of $T$ are all less than $b_j$ for $j\ge 2$.

The other parts follow directly from the unique characterization in Lemma \ref{l-Tnk} of $T\in \TAU_{n}^k$ by the $b_j$'s. The only remaining condition is $k+1\le b_1<b_2$, as desired.
\end{proof}

\subsection{The Higher $q,t$-Catalan polynomials}
The $q, t$-Catalan polynomials were first introduced by
Garsia and Haiman \cite{qt-Catalan} in 1996.
The identity
 \begin{align}
   C_n(q,t) = \sum_{D\in \CD_{n,n}} q^{\dinv(D)} t^{\area(D)}=\sum_{D\in \CD_{m,n}} q^{\area(D)} t^{\bounce(D)},
 \end{align}
was proved in \cite{qt-Catalan-pos}, where we simply set $\bounce(D)=\area(\Phi^{-1}(D))$. It is referred to as the classical case, and plays a prominent role in combinatorics, symmetric function
theory, and algebraic geometry.

The higher $q,t$-Catalan polynomials $C_n^{(k)}(q,t)$, also introduced in the same paper \cite{qt-Catalan}, are natural generalizations of the classical case. We will use the following combinatorial form due to \cite{Loehr-higher-qtCatalan}:
 \begin{align}
   C_n^{(k)}(q,t) = \sum_{D\in \CD_{kn+1,n}} q^{\area(D)} t^{\bounce(D)}=\sum_{D\in \CD_{m,n}} q^{\dinv(D)} t^{\area(D)}.
 \end{align}
Again we simply set $\bounce(D)=\area(\Phi^{-1}(D))$.
It is known that the sweep map $\Phi$ takes $(\dinv,\area)$ to $(\area,\bounce)$, which generalizes the $\zeta$ map for the classical case in \cite{Hag-book08}. However, direct combinatorial interpretation of the bounce statistic is only known for the Fuss case. See \cite{Loehr-higher-qtCatalan}.

Our purpose in this subsection is to establish the following formula for higher Catalan polynomials.
\begin{prop}
Let $m'=kn'+1$ where $n'=n-1$. Then we have
 \begin{align}\label{e-area-bounce}
   C^{(k)}_n(q,t) = \sum_{D'\in \CD_{m',n'}} q^{\area(D')} t^{bounce(D')} \sum_{i=1}^{m'+n'} \chi(\bar r_i(D')<m') q^{i-1} t^{n'k-\bar r_i(D')},
 \end{align}
 where $\bar r_i(D')$ is $i$-th smallest rank in the rank sequence of $\Phi^{-1}(D')$. Or equivalently,
 \begin{align}\label{e-dinv-area}
   C^{(k)}_n(q,t) = \sum_{D'\in \CD_{m',n'}} q^{\dinv(D')} t^{\area(D')} \sum_{i=1}^{m'+n'} \chi(r_i(D')<m') q^{i-1} t^{n'k- r_i(D')},
 \end{align}
 where $r_i(D')$ is the $i$-th smallest rank in the rank sequence of $(D')$.
\end{prop}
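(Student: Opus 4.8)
The plan is to derive \eqref{e-area-bounce} directly from the refined count of pre-images in Theorem~\ref{t-3.1} (equivalently the second solution), and then obtain \eqref{e-dinv-area} by applying the sweep map $\Phi$. First I would start from the combinatorial form
$$
C^{(k)}_n(q,t) = \sum_{D\in \CD_{m,n}} q^{\area(D)} t^{\bounce(D)},
$$
where $m=kn+1$ and $\bounce(D)=\area(\Phi^{-1}(D))$. I would then partition $\CD_{m,n}$ according to the value $D'\in\CD_{m',n'}$ with $red(T(D))=T(D')$; by Theorem~\ref{t-3.1} (via Lemma~\ref{l-D-red}) the fiber over $D'$ is indexed by the circled ranks of $\Phi^{-1}(D')$, i.e. by the indices $i$ with $\bar r_i(D')<m'$, where $\bar r_i(D')$ is the $i$-th smallest rank in the rank sequence of $\Phi^{-1}(D')$. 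So the sum splits as
$$
C^{(k)}_n(q,t) = \sum_{D'\in\CD_{m',n'}} \ \sum_{i:\,\bar r_i(D')<m'} q^{\area(D_i)}\, t^{\bounce(D_i)},
$$
where $D_i=\Phi(\WD_i)$ and $\WD_i = S\,B_i A_i\,W^k$ is the path obtained in Theorem~\ref{t-3.1} by cutting $\Phi^{-1}(D')$ at the $i$-th smallest rank.

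The crux is to compute $\area(D_i)=\dinv(\WD_i)$ and $\bounce(D_i)=\area(\WD_i)=\area(\Phi^{-1}(D_i))$ in terms of $D'$ and $i$. For the bounce: $\WD_i = S\,B_iA_i\,W^k$ and, as shown in the proof of Theorem~\ref{t-3.1}, in the rank system of $\CD_{m',n'}$ the ranks of $B_iA_i$ are exactly the ranks of $\Phi^{-1}(D')$ shifted so that the cut node becomes the top rank $m'$; concretely the whole rank multiset of $\WD_i$ (in that rank system) is $\{0\}\cup\{m'-j(n-1):0\le j\le k-1\}\cup\{\,\overline r_\ell(D')+m'-\overline r_i(D')\,\}_\ell$. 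Since area is (essentially) the sum of the ranks along the path divided by a constant plus a fixed correction — or more cleanly, since cutting and re-gluing a Dyck path at a node shifts every rank by a fixed amount and area depends only on the rank multiset up to the ambient-$(m,n)$ correction of Lemma~\ref{l-1.3} — one reads off that $\bounce(D_i)=\bounce(D')+\big(n'k-\overline r_i(D')\big)$, where the $n'k-\overline r_i$ term accounts for the prepended $S$ and appended $W^k$ together with the shift, using $m'=kn'+1$. I would verify this constant by a direct bookkeeping of lattice cells: the prepended North step and appended $k$ East steps add a controlled block of cells, and the circular rotation at rank $\overline r_i$ changes the area by exactly $n'k - \overline r_i$ (this is the same computation that appears implicitly when one checks $\WD_i\in\CD_{m,n}$). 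For the area of $D_i$, i.e. $\dinv(\WD_i)$: I would argue that $\dinv(\WD_i)=\dinv(D')+(i-1)$. The term $i-1$ is the key combinatorial output of the construction — in the walk $w(T(D_i))$, the inserted loop enters $w(T')$ at a position determined by $i$, so the new entry (the first column of $T(D_i)$) contributes a dinv increment equal to the number of smaller ranks among the relevant vertices, which is $i-1$; alternatively, $\dinv$ of $\WD_i$ differs from $\dinv$ of $D'$ by the number of ranks of $\Phi^{-1}(D')$ lying strictly below the cut rank, which is precisely $i-1$ by definition of $\overline r_i$. Assembling these two identities gives
$$
\sum_{i:\,\overline r_i(D')<m'} q^{\area(D_i)} t^{\bounce(D_i)}
= q^{\dinv(D')}t^{\bounce(D')}\sum_{i=1}^{m'+n'}\chi\big(\overline r_i(D')<m'\big)\,q^{i-1}\,t^{n'k-\overline r_i(D')},
$$
and since $\dinv(D')=\area(\Phi(D'))$ and $\bounce(D')=\area(\Phi^{-1}(D'))$... wait — here I should be careful: in the first factor I want $q^{\area(D')}t^{\bounce(D')}$ to match \eqref{e-area-bounce}, which forces $\area(D_i)=\area(D')+(i-1)$; indeed $\area(D_i)=\dinv(\Phi^{-1}(D_i))$ is not quite what I wrote, so the clean statement is: $\area(D_i)=\area(D')+(i-1)$ directly as lattice-cell counts between the larger path and its diagonal, and this is the identity I would actually prove, bypassing dinv. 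Summing over $D'\in\CD_{m',n'}$ then yields \eqref{e-area-bounce} verbatim.

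Finally, to get \eqref{e-dinv-area} from \eqref{e-area-bounce} I would apply the substitution $D'\mapsto \Phi^{-1}(D')$ throughout, i.e. reindex the outer sum by $\Phi(D')$ in place of $D'$. Under this relabeling $\area(D')\leftrightarrow\dinv(D')$ and $\bounce(D')\leftrightarrow\area(D')$ by the fundamental ``$\dinv\to\area$, $\area\to\bounce$'' property of the sweep map recalled in the text, and the rank sequence of $\Phi^{-1}(D')$ becomes the rank sequence $r_i(D')$ of $D'$ itself, so the inner sum transforms as stated. The main obstacle I anticipate is pinning down the two exponent constants — that the area increases by exactly $i-1$ and the bounce by exactly $n'k-\overline r_i(D')$ — rather than by these quantities plus some $D'$-dependent or $i$-dependent fudge term. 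This requires carefully tracking how prepending $S$, appending $W^k$, and performing the circular rotation at rank $\overline r_i$ each affect the count of cells above the path (for area) and the closed walk $w(T)$ (for the $q^{i-1}$), and making sure the change of rank system from $\CD_{m',n'}$ to $\CD_{m,n}$ (Lemma~\ref{l-1.3}) introduces only the already-accounted constant shift. Once those two bookkeeping lemmas are in hand, the rest is a clean reindexing.
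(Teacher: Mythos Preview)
Your plan is essentially the paper's: partition $\CD_{m,n}$ by the fibers of $red$, compute the increment in $\area$ and in $\bounce$ on each fiber, and then pass from \eqref{e-area-bounce} to \eqref{e-dinv-area} by applying $\Phi$. The two ``bookkeeping lemmas'' you isolate are exactly what the paper states and proves as Corollary~\ref{c-area} (the area increment) and Theorem~\ref{t-bounce} (the bounce increment).

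Where you diverge is in the sketches for those two increments, and both sketches point in a harder direction than the paper's. For the area increment, your detour through $\dinv(\WD_i)$ is unnecessary and, as you noticed, gets tangled; the paper instead uses the \emph{second} solution and the closed formula $\area(D)=\sum_j b_j-(k+1)\binom{n+1}{2}$ from Proposition~\ref{p-area-T(D)}, from which $\area(D)=\area(D')+(b_1-k-1)$ drops out in one line. For the bounce increment, your heuristic that ``area depends only on the rank multiset up to a shift'' is not how the paper argues Theorem~\ref{t-bounce}: the actual proof tracks the \emph{positions} $u_1<\cdots<u_n$ of the row-$1$ entries of $T$ inside the walk $w(T)$, writes $r=\beta m'-\alpha n'$, and computes $\cobounce(D')$ as a sum of shifted $u_j$'s. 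The rank-shift intuition alone does not pin down the constant $n'k-r$.

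One genuine gap you should be aware of: to get the factor $q^{i-1}$ indexed by the \emph{same} $i$ as $t^{\,n'k-\bar r_i(D')}$, you need that cutting $\Phi^{-1}(D')$ at its $i$-th smallest rank produces the $T$ with $b_1(T)=k+i$, i.e.\ that the first-solution indexing (by rank) and the second-solution indexing (by $b_1$) line up in this specific monotone way. Your proposal asserts $\area(D_i)=\area(D')+(i-1)$ ``directly as lattice-cell counts,'' but $D_i$ and $D'$ live in different rectangles, so there is no direct cell-by-cell comparison; you really do need the $b_1\leftrightarrow i$ matching (or an equivalent argument) before Corollary~\ref{c-area} yields $i-1$ rather than just $b_1-k-1$. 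The paper glosses over this link as well, so it is worth making explicit.
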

Equation \eqref{e-dinv-area} is simply obtained by applying the sweep map $\Phi$ to \eqref{e-area-bounce}, which itself is suggested by the two solutions in the previous subsection. These solutions give close connections between
$T'$ and those $T$ with $red(T)=T'$. Indeed, the only thing we need to show is Corollary \ref{c-area} and Theorem \ref{t-bounce} below.

To this end, we need to establish some formulas about the area statistic. Recall that
$\area(D)$ is the the number of cells between $D$ and the diagonal. It is easy to see that the maximal area is $k\binom{n}{2}$, so we will also use
$\coarea(D)=k\binom{n}{2}-\area(D)$ for the number of cells above $D$.
Similarly, we use $\cobounce(D)=k\binom{n}{2}-\bounce(D) $.

\begin{prop}\label{p-area-T(D)}
In the Fuss case $m=kn+1$,   if $T=T(D)$ for a Dyck path $D\in \CD_{m,n}$ has top row entries $(t_1,\dots, t_n)$ and bottom row entries $(b_1,\dots, b_n)$, then we have
\begin{align}
  \coarea(D) &= \sum_{i=1}^n t_i - \binom{n+1}{2}, \label{e-coarea-t}\\
  \area(D) &= \sum_{i=1}^n b_i -(k+1) \binom{n+1}{2} \label{e-area-b}.
\end{align}
\end{prop}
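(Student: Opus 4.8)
The plan is to reduce both identities to a single combinatorial statement about how the rows of $T(D)$ encode the "corner sequence" of the path $D$, and then to evaluate a telescoping sum. First I would set up coordinates: recall from Corollary \ref{c-S-positions} that if $(t_1,\dots,t_n)$ are the positions of the $S$'s in $\SW(D)$ — equivalently the top row of $T(D)$ — then after reading the first $t_j-1$ letters the path has made $j-1$ North steps and $t_j-j$ East steps, so it sits at the lattice point $(t_j-j,\,j-1)$, which is the left endpoint of the $j$-th North step. The number of whole lattice cells strictly between the path and the line of slope $n/m$ through the origin, counted row by row (the $j$-th row of cells lies between heights $j-1$ and $j$), is most cleanly computed as a coarea: in the $j$-th horizontal strip the path occupies the column $x=t_j-j$ on its way up, and the diagonal $y=\tfrac{n}{m}x$ crosses that strip over the interval $x\in(\tfrac{m(j-1)}{n},\tfrac{mj}{n})$. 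Because $m=kn+1$, the number of cells in strip $j$ that lie above $D$ works out to $t_j-j-(j-1)k+(\text{a floor term that vanishes by coprimality})$, and summing over $j$ gives $\coarea(D)=\sum_j t_j - \sum_j j - k\sum_j(j-1) = \sum_j t_j - \binom{n+1}{2} - k\binom{n}{2}$. Since $\coarea(D)=k\binom{n}{2}-\area(D)$ by definition, this already rearranges into \eqref{e-coarea-t} once one is careful that the two "co" conventions match; I would double-check the constant by testing $n=1$ (where $t_1=1$, $\coarea=0$) and the staircase path.

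For \eqref{e-area-b} I would run the mirror-image argument using the involution $\psi$ from Lemma \ref{l-Tnk}: the bottom row $(b_1,\dots,b_n)$ of $T(D)$ becomes, under $\psi$, the top row of $\psi(T(D))=T(\widehat D)$ for the rank-complement path $\widehat D$ of Proposition \ref{p-rank-complement}, via $b_j\mapsto (k+1)n+1-b_{n+1-j}$. Applying the already-proved \eqref{e-coarea-t} to $\widehat D$ and translating back — using $\coarea(\widehat D)=\area(D)$, which follows from the rank-complement symmetry together with the area/coarea flip — turns $\sum_i\big((k+1)n+1-b_{n+1-i}\big)-\binom{n+1}{2}$ into $\area(D)$, and reindexing $i\mapsto n+1-i$ collapses this to $n(k+1)n+n-\sum_i b_i-\binom{n+1}{2}$; a short bookkeeping check that $n^2(k+1)+n-\binom{n+1}{2}=(k+1)\binom{n+1}{2}$ (which holds since $(k+1)\binom{n+1}{2}=(k+1)\tfrac{n(n+1)}{2}$ and $n^2(k+1)+n-\tfrac{n(n+1)}{2}=\tfrac{n(n+1)}{2}(k+1)$ after expanding) then yields \eqref{e-area-b}. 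Alternatively, and perhaps more transparently, one can derive \eqref{e-area-b} directly by the same strip-by-strip count performed from the bottom: the $j$-th North step of $D$ ends at the lattice point recorded (up to the $+1$ shift of Theorem \ref{t-I.1}) by the bottom row entry $b_j$, and the number of cells below $D$ and above the diagonal in the $j$-th strip is $b_j-(k+1)j$ plus a vanishing floor term, so $\area(D)=\sum_j b_j-(k+1)\sum_j j=\sum_j b_j-(k+1)\binom{n+1}{2}$.

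The main obstacle I anticipate is not the summation — that telescopes immediately once set up — but pinning down exactly which lattice point each row of $T(D)$ records and, relatedly, controlling the floor/fractional-part terms so they genuinely vanish. Concretely, the passage from "position $t_j$ of an $S$ in $\SW(D)$" to "left endpoint of the $j$-th North step of $D$" is clean, but relating the cell count in a strip to $t_j$ requires knowing that the diagonal $y=\tfrac{n}{m}x$ contributes no extra partial cell in each strip, which is exactly where coprimality of $(m,n)$ and the Fuss form $m=kn+1$ enter; I would isolate this as the one genuine lemma (essentially: for $0<a<m$ the quantity $\lceil \tfrac{na}{m}\rceil$ has a clean closed form when $m=kn+1$), prove it by the elementary estimate $na<m(a k^{-1}+1)$-type bound already used in the proof of Lemma \ref{l-1.3}, and then the two displayed formulas drop out. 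A sanity check against the worked $k=3$, $n=5$ example in \eqref{e-3.3}–\eqref{e-3.4} — where $\sum t_i$ and $\sum b_i$ are explicit — would be used to confirm the constants before writing up.
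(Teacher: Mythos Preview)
For \eqref{e-coarea-t} your strip-by-strip idea is the paper's, but you have overcomplicated it and introduced a sign error. The coarea counts cells to the left of the path, with no reference to the diagonal: in row $j$ the path goes up at $x=t_j-j$, so there are exactly $t_j-j$ such cells (equivalently, the number of $W$'s preceding the $j$-th $S$), and $\coarea(D)=\sum_j(t_j-j)=\sum_j t_j-\binom{n+1}{2}$ in one line. Your quantity $t_j-j-(j-1)k$ is the \emph{negative} of the row-$j$ area contribution, and its sum $\sum_j t_j-\binom{n+1}{2}-k\binom{n}{2}$ equals $-\area(D)$; no ``convention matching'' repairs this. There are no floor terms once the diagonal is dropped from the picture.

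For \eqref{e-area-b} there is a genuine gap: both of your key claims are false. First, $\psi(T(D))\ne T(\widehat D)$ in general. Take $k=1$, $n=3$, $D=NNEENEE$: the maximal rank sits after $NN$, and reversing $EENEENN$ returns $D$, so $\widehat D=D$; yet $\psi$ sends $T(D)=\left(\begin{smallmatrix}1&2&5\\3&4&6\end{smallmatrix}\right)$ to $\left(\begin{smallmatrix}1&3&4\\2&5&6\end{smallmatrix}\right)\ne T(D)$. Second, $\coarea(\widehat D)=\area(D)$ fails already for $D=NNEEE$ ($m=3$, $n=2$): again $\widehat D=D$, but $\coarea(D)=0\ne 1=\area(D)$. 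Your ``bookkeeping check'' $n^2(k+1)+n-\binom{n+1}{2}=(k+1)\binom{n+1}{2}$ is likewise false for any $k\ge 1$ (try $n=2$, $k=1$: left side $7$, right side $6$), which should have flagged the upstream error. Your alternative route misreads Theorem~\ref{t-I.1}: the positions $b_j+1$ locate the $N$'s in the word $\NE(D)$, which is built from the rank sequence of $\Phi^{-1}(D)$, not from the North-step endpoints of $D$ itself, so ``$b_j$ records a lattice point of $D$'' is unjustified.

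The paper's fix is to apply the rank complement to $\Phi^{-1}(D)$ rather than to $D$. By Proposition~\ref{p-rank-complement}, reversing $\NE(D)$ with $N\mapsto S$, $E\mapsto W$ gives $\SW\big(\Phi(\widehat{\Phi^{-1}(D)})\big)$, whose $S$-positions are $m+n-b_i$ (via Theorem~\ref{t-I.1}); and the same proposition gives $\area\big(\Phi(\widehat{\Phi^{-1}(D)})\big)=\area(D)$, hence equality of coareas (the rank complement \emph{preserves} area, it does not swap area with coarea). Applying the already-proved \eqref{e-coarea-t} to this auxiliary path yields $\coarea(D)=\sum_i(m+n-b_i)-\binom{n+1}{2}$, and subtracting from $k\binom{n}{2}$ gives \eqref{e-area-b}.
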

\begin{proof}
By Lemma \ref{l-Tnk}, the increasing sequence $(t_1,\dots, t_n)$ uniquely determines $D$, and they give the positions of the $S$'s of $D$. It is clear that $t_i-i$ is just the number of $W$'s before
the $i$-th $S$ in $D$, which is also the number of lattice squares in row $i$ to the left of $D$. It then follows that
$$ \coarea D = \sum_{i=1}^n (t_i-i)= \sum_{i=1}^n t_i - \binom{n+1}{2}.$$
This is just \eqref{e-coarea-t}.

We will use the first part to prove \eqref{e-area-b}. By
apply Proposition \ref{p-rank-complement} to our $\oD=\Phi^{-1}(D)$, we obtain
\begin{align*}
 \coarea(D)= \coarea(\Phi(\widehat{\oD})) &= \sum_{i=1}^n (m+n-b_i) -\binom{n+1}{2},
\end{align*}
since the positions of the $N$'s in $\NE(D)$ are exactly $b_i+1$ for $1\le i\le n$.
It follows that
\begin{align*}
  \area(D) &=k\binom{n}{2}-  \sum_{i=1}^n (m+n-b_i) +\binom{n+1}{2}\\
  &=\sum_{i=1}^n b_i + \frac12 kn(n-1) - n(kn+n+1) +\frac{1}{2}n(n+1)\\
  &=\sum_{i=1}^n b_i -(k+1)\binom{n+1}{2}.
\end{align*}
This completes the proof.
\end{proof}

A direct consequence is the following corollary.
\begin{cor}\label{c-area}
In the Fuss case $m=kn+1$, suppose $T=T(D)$ reduces to $T'=T(D')$
for $D\in \CD_{m,n}$.   If $b_1(T)=k+j$ (i.e., the first entry of the bottom row of $T$), then
$$ \area(D)=\area(D')+j-1.$$
\end{cor}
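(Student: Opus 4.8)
The plan is to derive Corollary \ref{c-area} directly from Proposition \ref{p-area-T(D)} together with the relation between the bottom rows of $T$ and $T'$ that was established just above in the ``Second solution'' theorem. First I would recall that formula \eqref{e-area-b} expresses the area of any Fuss Dyck path in terms of the bottom row of its tableau: if $T=T(D)\in\TAU_n^k$ has bottom row $(b_1,\dots,b_n)$, then $\area(D)=\sum_{i=1}^n b_i-(k+1)\binom{n+1}{2}$, and similarly $\area(D')=\sum_{i=1}^{n-1}b_i'-(k+1)\binom{n}{2}$ where $(b_1',\dots,b_{n-1}')$ is the bottom row of $T'=T(D')$.

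Next I would invoke the description of how the bottom rows are related. Since $T'=\operatorname{red}(T)$ is obtained from $T$ by deleting the first column and relabelling contiguously, and since (as noted in the Second solution) all $k+1$ first-column entries of $T$ are smaller than every entry $b_j$ with $j\ge 2$, the reduction shifts each of those entries down by exactly $k+1$. Hence $b_j = b_{j-1}' + (k+1)$ for $2\le j\le n$. Summing, $\sum_{j=2}^n b_j = \sum_{j=1}^{n-1} b_j' + (n-1)(k+1)$, so $\sum_{j=1}^n b_j = b_1 + \sum_{j=1}^{n-1}b_j' + (n-1)(k+1)$.

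Substituting the hypothesis $b_1 = b_1(T) = k+j$ gives $\sum_{i=1}^n b_i = \sum_{i=1}^{n-1} b_i' + (k+j) + (n-1)(k+1)$. Plugging this into \eqref{e-area-b} and using \eqref{e-area-b} for $D'$ yields
\[
\area(D) = \area(D') + (k+j) + (n-1)(k+1) + (k+1)\binom{n}{2} - (k+1)\binom{n+1}{2}.
\]
Since $\binom{n+1}{2}-\binom{n}{2} = n$, the constant terms collapse to $(k+j) + (n-1)(k+1) - n(k+1) = (k+j) - (k+1) = j-1$, giving $\area(D) = \area(D') + j-1$ as claimed. There is no real obstacle here; the only point requiring a little care is the justification that the first-column entries of $T$ all lie below $b_2,\dots,b_n$ (so that the shift is uniformly $k+1$), but this is immediate from column-increase in $T$ together with the fact that $b_1<b_2$ and $c_{k+1}=b_1$, so every first-column entry is $\le b_1 < b_j$ for $j\ge 2$. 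Thus the proof is a short computation built on Proposition \ref{p-area-T(D)} and the structural relation $b_j=b_{j-1}'+k+1$.
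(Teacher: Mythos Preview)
Your argument is correct and is precisely the computation the paper has in mind when it calls the corollary ``a direct consequence'' of Proposition~\ref{p-area-T(D)}: apply \eqref{e-area-b} to both $D$ and $D'$, use the bottom-row relation $b_j=b_{j-1}'+(k+1)$ from the Second solution theorem, and simplify via $\binom{n+1}{2}-\binom{n}{2}=n$. The only stylistic quibble is that you reuse the letter $j$ both as a summation index and as the fixed value in $b_1=k+j$; renaming the running index would make the write-up cleaner.
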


The next result needs some work.
\begin{theo}\label{t-bounce}
Suppose in the Fuss case $m'=kn'+1$, we cut $\Phi^{-1}(D')\in \CD_{m',n'}$ at a node of rank $r<m'$ as $AB$, and set $\Phi^{-1}(D)=SBAW^k$. Then
$$ \cobounce (D) = \cobounce (D') + r. $$
Equivalently,
$$ \bounce(D)=\bounce(D')+n'k-r.$$
\end{theo}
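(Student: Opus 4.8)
The plan is to compute $\bounce(D) = \area(\Phi^{-1}(D))$ directly from the geometry of the path $\widetilde D := \Phi^{-1}(D) = SBAW^k$, working throughout in the rank system of $\CD_{m',n'}$ and invoking Lemma \ref{l-1.3} at the very end to pass to $\CD_{m,n}$. Recall from the proof of Theorem \ref{t-3.1} that if we cut $\Phi^{-1}(D')$ at a node of rank $r<m'$ as $AB$, then the ranks of $\widetilde D$, read in the $\CD_{m',n'}$ system, are: (i) a single starting node of rank $0$ (the initial $S$); (ii) the ranks of the circular rearrangement $BA$, which are exactly the ranks of $\Phi^{-1}(D')$ each shifted by $m'-r$ (so that the path $BA$ begins at rank $m'$ and the former rank-$r$ node, now at the seam, sits at rank $m'$ as well); and (iii) the $k$ trailing $W$ nodes of ranks $m', m'-n', m'-2n', \dots, m'-(k-1)n'$. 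The key point, established in that earlier proof, is that this shift preserves the relative order of all ranks, and that the minimum rank of $\widetilde D$ is $0$.

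First I would recall (or re-derive from Proposition \ref{p-area-T(D)}, since $\widetilde D$ and $\Phi^{-1}(D')$ both have explicit first-row descriptions) that $\coarea$ of a Fuss path is an affine function of its sum of first-row entries, equivalently of the multiset of ranks of its lattice points; more usefully, the area of a rational Dyck path equals a fixed affine shift of $\sum_i r_i$ where the $r_i$ range over the ranks of the lattice points (this is the standard "area = sum of ranks, up to normalization" identity, which is exactly how $\coarea(D)=\sum t_i - \binom{n+1}{2}$ was obtained). Applying this to $\Phi^{-1}(D)$ and to $\Phi^{-1}(D')$, the difference $\bounce(D)-\bounce(D') = \area(\Phi^{-1}(D)) - \area(\Phi^{-1}(D'))$ becomes the difference of the two rank-sums, appropriately normalized for the change of ambient rectangle from $(m',n')$ to $(m,n)$.

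Next I would carry out the bookkeeping of the rank-sum difference. Going from $\Phi^{-1}(D')$ (which has $m'+n'$ lattice points) to $\widetilde D$ (which has $m'+n'+k+1$ lattice points), we add: one node of rank $0$; a global shift of $m'-r$ applied to all $m'+n'$ old ranks; and $k$ new nodes of ranks $m'-jn'$ for $j=0,\dots,k-1$, summing to $km' - n'\binom{k}{2}$. So the raw rank-sum increases by $0 + (m'+n')(m'-r) + km' - n'\binom{k}{2}$. Then one must subtract off the normalization constants: the area-from-rank-sum formula for the $(m,n)$-rectangle has a different additive constant (and the ranks of $\Phi^{-1}(D')$, when reinterpreted in the $(m,n)$-system via Lemma \ref{l-1.3}, are each decreased by $1$ as noted in the earlier proof, since rank $0 = n'm'-m'n'$ becomes $-1 = n'm - m'n$). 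Collecting every term and simplifying — using $m'=kn'+1$, $m=kn+1$, $n=n'+1$, and $\area = k\binom{n}{2} - \coarea$ to convert back and forth — the dependence on $D'$ and on the bulk constants should cancel, leaving precisely $\bounce(D) - \bounce(D') = n'k - r$, equivalently $\cobounce(D) = \cobounce(D') + r$.

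The main obstacle I anticipate is purely the arithmetic of matching the two normalization constants correctly: one is working simultaneously with two rank systems (for $\CD_{m',n'}$ and $\CD_{m,n}$), and the affine "area vs. rank-sum" constant depends on which rectangle and which path-length one uses, so it is easy to be off by a term linear in $n$ or $k$. The conceptual content — that cutting at rank $r$ and rotating contributes a clean additive $r$ to the cobounce — is forced by the shift-by-$(m'-r)$ structure; the care is in verifying that the extra $S W^k$ boundary steps contribute exactly the complementary constant $n'k$ (not $r$-dependent), which is where I would double-check using the small worked example in Figure \ref{fig:cutsix} as a sanity check before committing to the general identity. An alternative, possibly cleaner route that I would keep in reserve: compute $\cobounce$ directly as the number of cells strictly above $\widetilde D$, decompose that region into the cells above $BA$ (which biject with the cells above $\Phi^{-1}(D')$, since $BA$ is a cyclic rotation and the rank shift is order-preserving) plus the cells in the triangular region cut out by the initial $S$ and final $W^k$, and identify the latter count as exactly $r$ by reading off how far into the path the rank-$r$ seam sits.
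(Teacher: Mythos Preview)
Your primary approach via rank sums contains a concrete error: you assert that the ranks of $\Phi^{-1}(D')$, reinterpreted in the $(m,n)$-system, are ``each decreased by $1$''. This is false. At a vertex reached after $b$ North and $a$ East steps the conversion is $bm-an=(bm'-an')+(bk-a)$, which varies from vertex to vertex; the remark in the proof of Theorem~\ref{t-3.1} that rank $0=n'm'-m'n'$ becomes $-1=n'm-m'n$ refers only to the single endpoint $(a,b)=(m',n')$. Consequently the $(m,n)$-rank-sum of $\widetilde D$ is not obtained from the $(m',n')$-rank-sum by a uniform shift, and since the affine area-from-rank-sum identity carries a multiplicative factor $m+n$ (respectively $m'+n'$) that differs between the two rectangles, your planned subtraction does not collapse the way you describe. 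Your backup geometric route has the same defect: the cells above $BA$ sit in the $m\times n$ rectangle while the cells above $\Phi^{-1}(D')$ sit in the $m'\times n'$ rectangle, and there is no direct bijection between these regions.

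The paper sidesteps the rank-system issue entirely by working with the \emph{positions} $u_1<\cdots<u_n$ of the North steps in $\SW(\Phi^{-1}(D))$ --- that is, it uses the formula $\cobounce(D)=\coarea(\Phi^{-1}(D))=\sum_i u_i-\binom{n+1}{2}$ from Proposition~\ref{p-area-T(D)} that you cite but then abandon in favour of rank sums. Writing $r=\beta m'-\alpha n'$ (so the cut sits $\alpha+\beta$ steps into $\Phi^{-1}(D')$), the rotation $AB\to BA$ shifts each North-step position of $\Phi^{-1}(D')$ by $-(\alpha+\beta)$ or by $+(m'+n'-\alpha-\beta)$ according as it lies in $B$ or in $A$; prepending the initial $S$ contributes a new position $1$ and shifts all others by one more. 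Summing these shifts and using the single identity $\beta(m'+n')-n'(\alpha+\beta)=\beta m'-\alpha n'=r$ yields $\cobounce(D)-\cobounce(D')=r$ immediately, with no change of rank system required. If you reroute your argument through North-step positions rather than rank sums, the computation you were aiming for goes through cleanly.
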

\begin{proof}
Let $m,n,m',n'$ as before.
Let $T=T(D)$, $T^*$ be obtained from $T$ by removing column 1 entries, and let $T'=T(D')$. Then $T'$ is obtained from $T^*$ by reducing the entries to be contiguous with smallest entry $1$.
Assume column 1 entries of $T$ are $c_1,\dots, c_{k+1}$. Then $c_1=1=t_1$ and $c_{k+1}=b_1$. The closed walk of $T$ is
$$w(T)=1\to b_1+1\to \cdots \cdots \to b_1 \to c_k\to \cdots \to c_2 \RA 1,$$
and $w(T^*)$ is simply obtained from $w(T)$ by omitting column 1 elements of $T$. However to construct $\Phi^{-1}(D')$, we need to start at $t_2$ in $w(T^*)$.

Now $r<m'$ can be uniquely written as $\beta m'-\alpha n'$. This means that in $w(T^*)$, the path $\Pi^*_{t_2\dashrightarrow b_1+1}$ from $t_2$ to $b_1+1$ has $\beta $ letters $S$ and $\alpha $ letters $W$. Then there are $\delta =m'+n'+1-a-b$ edges along the path $\Pi_{1\dashrightarrow t_2}$, and after the node $t_2$, there are exactly $\beta$ entries coming from row $1$ of $T$.

More precisely, if the positions of row 1 entries of $T$ in $w(T)$ are $u_1< u_2<\cdots< u_n$, then $t_1$ is at the position $u_1=1$ and $t_2$ is at the position $u_{n-\beta+1}=\delta +1$. It follows that the positions of row 1 entries of $T$ (excluding $t_1=1$) in $w(T^*)$ is given by
$$ \{ u_{n-\beta+1}, \dots, u_n, u_2+m'+n', \dots, u_{n-\beta}+m'+n'\} -\delta .$$
That is to say, $t_2$ is reset to position $1$, so that positions $u_j$ for $n-\beta+1\le j \le n$ becomes $u_j-\delta$, and positions $u_j$ for $2\le j \le n-\beta$ becomes $u_j-\delta+m'+n'$ due to the cyclic rearrangement. It then follows that
\begin{align*}
  \cobounce (D') &= \sum_{i=2}^n u_i -(n-1) \delta +(n-\beta-1)(m'+n') -\binom{n}{2} \\
                 &=\sum_{i=1}^n u_i -\binom{n+1}{2} +n-1 -\beta(m'+n')+(n-1)(m'+n'-\delta) \\
                 &= \cobounce (D) + n' -\beta(m'+n') +n'(\alpha+\beta-1)\\
                 &= \cobounce (D)  -\beta m' + \alpha n' =\cobounce(D)-r.
\end{align*}
This completes the proof.
\end{proof}
We illustrate by an example.
In Figure \ref{fig:chaos}, using the bottom row of $T'\in \TAU_4^3$ gives
$$ \area(D')= 7+10+14+16 -(3+1)\binom{4+1}{2} =7,$$
which agree with the direct count for $D'$ in \eqref{e-D'}. Now look at the $T$ corresponding to rank $9$, which is the first tableau in the second row. We have
\begin{align}
  \area(D)=8+11+14+18+20-(3+1)\binom{5+1}{2}=11=\area(D')+4.
\end{align}
This agrees with Corollary \ref{c-area}, since $8-(3+1)=4$.

For cobounce of $D'$ and $D$, we need to look at the picture of $\Phi^{-1}(D')$ in \eqref{e-bar-D'} and $\Phi^{-1}(D)$ in Figure \ref{fig:cutsix}, the first picture. Direct count gives
$\cobounce(D')=11$ and $\cobounce(D)=20$. This agrees with Theorem \ref{t-bounce}.

\section{Concluding Remark}
In this paper we have presented an $O(m+n)$ algorithm for inverting the sweep map in the Fuss case $m=kn\pm 1$ by introducing an intermediate object $\TAU_{n}^k$. The inverse bijection $\Phi^{-1}$ is then decomposed into two easy steps, by first constructing $T=T(D)$ from the $SW$ sequence of $D$ by the filling algorithm, and then produce $\Phi^{-1}(D)$ by constructing a closed walk on $T$. The proof is lengthy, but this is the usual situation: the easier the algorithm is, the harder to prove its bijectivity.

Our algorithm for the Fuss case raises a natural question: Is there an $O(m+n)$ algorithm to invert the sweep map for general $m$ and $n$. Our hope is to find an intermediate object replacing $\TAU_n^k$, but so far we have not succeeded.

It will be interesting to find a direct combinatorial interpretation of the $\bounce$ statistic for general $m$ and $n$.

Identity \eqref{e-dinv-area} can be used to give a recursive algorithm for computing higher $q,t$-Catalan polynomials, however in such a formula we need to keep track of the ranks of $D'$ that are smaller than $m'$. The formula seems too complicated to be included here.

{\small \textbf{Acknowledgements:}

%
The first named author was supported by NFS grant DMS13--62160.


\begin{thebibliography}{99}
\bibitem{dinv-def}
Drew Armstrong, Nicholas A.~Loehr, and Gregory S.~Warrington, \newblock Rational parking functions and Catalan
numbers, \newblock {\em Annals Combin.}, 20(1):21--58, 2016.

\bibitem{sweepmap}
 D. Armstrong, N. A. Loehr, and G. S. Warrington, Sweep maps: A continuous family of sorting
algorithms, Adv. Math. 284 (2015), 159--185.

\bibitem{qt-Catalan-pos}
A. Garsia and J. Haglund,
A proof of the $q,t$-Catalan positivity conjecture, LACIM 2000 conference on Combinatorics, Computer Science and Applications (Montreal, QC), Discrete Math., 256 (2002), 677--717.


\bibitem{qt-Catalan}
A. Garsia and M. Haiman, A remarkable $q, t$-Catalan sequence and $q$-Lagrange inversion, J. Algebraic
Combinatorics 5 (1996), 191--244.

\bibitem{dinv-area}
A. Garsia and G. Xin, Dinv and Area, Electron. J. Combin., 24 (1) (2017), P1.64.

\bibitem{Rational-Invert}
Adriano M.~Garsia and Guoce Xin, \newblock  Inverting the rational sweep map, \newblock{\em J. of Combin.}, to appear, arXiv:{1602.02346}.


\bibitem{Gorsky-Mazin}
 E. Gorsky and M. Mazin, Compactified Jacobians and $q,t$-Catalan Numbers, J. Combin. Theory Ser. A, 120 (2013), 49--63.

\bibitem{Gorsky-Mazin2}
 E. Gorsky, M. Mazin, Compactified Jacobians and $q,t$-Catalan Numbers II, J. Algebraic Combin., 39 (2014), 153--186.

\bibitem{Hag-book08}
J. Haglund, The $q, t$-Catalan numbers and the space of diagonal harmonics, with an appendix on the
combinatorics of Macdonald polynomials, AMS University Lecture Series, 2008.

\bibitem{Loehr-higher-qtCatalan}
Nicholas A. Loehr, Conjectured statistics for the higher $q,t$-Catalan sequences, Electron. J. Combin., 12 (2005)
research paper R9; 54 pages (electronic).


\bibitem{Nathan}
H.~Thomas and N.~Williams, \newblock Sweepping up zeta,  preprint, arXiv:1512.01483.

\bibitem{xin-mncore}
G. Xin, A note on rank complement of rational Dyck paths and conjugation of $(m,n)$-core partitions, J. of Combin., accepted, arXiv:1504.02075.
\end{thebibliography}
\end{document}